\newcommand{\N}{\mathbb{N}}                                              
\newcommand{\Z}{\mathbb{Z}}
\newcommand{\R}{\mathbb{R}}                                              
\newcommand{\T}{\mathbb{T}}
\renewcommand{\P}{\mathbb{P}}
\newcommand{\ds}{\displaystyle}
\newcommand{\A}{\mathcal{A}}
\newcommand{\p}{\mathbf{p}}
\renewcommand{\r}{\mathbf{r}}
\renewcommand{\S}{\mathcal{S}}
\renewcommand{\L}{\mathcal{L}}
\renewcommand{\d}{\partial}
\newtheorem{de}{Definition}
\newtheorem{theo}{Theorem}
\newtheorem{ex}{Example}
\newtheorem{lem}{Lemma}
\newtheorem{prop}{Proposition}
\newtheorem{cor}{Corollary}
\numberwithin{theo}{section}
\numberwithin{lem}{section}
\numberwithin{ex}{section}
\numberwithin{ex}{section}
\numberwithin{equation}{section}
\numberwithin{prop}{section}
\numberwithin{cor}{section}
\begin{document}

\author{Marielle Simon\footnote{UMPA, UMR-CNRS 5669, ENS de Lyon, 46 all\'ee d'Italie, 69007 Lyon, France. \textit{Mail}: marielle.simon@ens-lyon.fr}}
\title{Hydrodynamic Limit for the Velocity-Flip Model}

\maketitle

\begin{abstract}
We study the diffusive scaling limit for a chain of $N$ coupled oscillators. In order to provide the system with good ergodic properties, we perturb the Hamiltonian dynamics with random flips of velocities, so that the energy is locally conserved. We derive the hydrodynamic equations by estimating the relative entropy with respect to the local equilibrium state, modified by a correction term. 
\end{abstract}

\thanks{\textsc{Acknowledgements.} { I thank C\'edric Bernardin and Stefano Olla for giving me this problem, and for the useful discussions and suggestions on this work.}}

\section*{Introduction}

This paper aims at proving the hydrodynamic limit for a Hamiltonian system of $N$ coupled oscillators. The ergodic properties of Hamiltonian dynamics are poorly understood, especially when the size of the system goes to infinity. That is why we perturb it by an additional conservative mixing noise, as it has been proposed for the first time by Olla, Varadhan and Yau \cite{MR1231642} in the context of gas dynamics, and then in \cite{MR1278883} in the context of Hamiltonian lattice dynamics (see e.g. \cite{BBO},  \cite{MR2305383}, \cite{BS12}, \cite{BerKan}, \cite{MR2185330}, \cite{BerOlla}, \cite{EveOll}, \cite{LO} for more recent related works).

We are interested in the macroscopic behavior of this system as $N$ goes to infinity, after rescaling space and time with the diffusive scaling. The system is considered under periodic boundary conditions -- {more precisely we work on the one-dimensional discrete torus $\T_N:=\{0,...,N-1\}$.  The configuration space is denoted by $\Omega_N:=\left(\R\times\R\right)^{\T_N}$. A typical configuration is given by $\omega=(p_x,r_x)_{x \in \T_N}$ where $p_x$ stands for the velocity of the oscillator at site $x$, and $r_x$ represents the distance between oscillator $x$ and oscillator $x+1$. The deterministic dynamics is described by the harmonic Hamiltonian}
\begin{equation} \mathcal{H}_N=\sum_{x=0}^{N-1}\left[ \frac{p_x^2}{2}+\frac{r_x^2}{2}\right]\ . \end{equation}
The stochastic perturbation is added only to the velocities, in such a way that the energy of particles is still conserved. Nevertheless, the momentum conservation is no longer valid. The added noise can be easily described: each particle independently waits an exponentially distributed time interval and then flips the sign of velocity. {The strength of the noise is regulated by the parameter $\gamma >0$}. The total deformation $\sum r_x$  and the total energy $\sum (p_x^2 + r_x^2)/2$ are the { only two} conserved quantities. Thus, the Gibbs states are parametrized by two potentials, temperature and tension: for $\beta >0$ and $\lambda \in \R$, the equilibrium Gibbs measures $\mu_{\beta,\lambda}^N$ on the configuration space $\Omega^N:=(\R\times \R)^{\T_N}$ are given by the product measures\begin{equation} d \mu_{\beta,\lambda}^N=\prod_{x \in \T_N} \frac{e^{-\beta e_x-\lambda  r_x} }{Z(\beta,\lambda)} dr_x dp_x \ , \end{equation} where $ e_x:=(p_x^2+r_x^2)/2$ is the energy of the particle at site $x$, and $Z(\beta,\lambda)$ is the normalization constant.  The temperature is equal to $\beta^{-1}$ and the tension is given by $\lambda / \beta$.

The goal is to prove that the two empirical profiles associated to the conserved quantities converge in the thermodynamic limit $N \to \infty$ to the macroscopic profiles $\mathbf{r}(t,\cdot)$ and $\mathbf{e}(t,\cdot)$ which satisfy an autonomous system of coupled parabolic equations. Let $\mathbf{r}_0 : \T \to \R$ and $\mathbf{e}_0 : \T \to \R$ be respectively the initial macroscopic deformation profile and the initial macroscopic energy profile defined on the one-dimensional torus $\T=[0,1]$. We want to show that the functions $\mathbf{r}(t ,q)$ and $ \mathbf{e}(t,q)$ defined on $\R_+ \times \T$ are solutions of  \begin{equation} \left\{\begin{aligned} \d_t \mathbf{r} &=\frac{1}{\gamma} \d^2_q \mathbf{r} \ , \\
 \d_t \mathbf{e} & = \frac{1}{2\gamma} \d^2_q\left(\mathbf{e}+\frac{\mathbf{r}^2}{2}\right)\ ,\end{aligned} \right.  \quad  q \in \T, \ t \in \R\ , \label{h} \end{equation} with the initial conditions $\mathbf{r}(0,\cdot)=\mathbf{r}_0(\cdot)$ and $\mathbf{e}(0,\cdot)=\mathbf{e}_0(\cdot)$.

We approach this problem by using the relative entropy method, introduced for the first time by H. T. Yau \cite{MR1121850}  for a gradient {\footnote{A conservative system is called gradient if the currents corresponding to the conserved quantities are gradients.}} diffusive Ginzburg-Landau dynamics. For non-gradient models, Varadhan  \cite{MR1354152} has proposed an effective approach. Funaki et al. followed his ideas in \cite{MR1395890} to extend the relative entropy method to some non-gradient processes and introduced the concept of local equilibrium state of second order approximation. 

The usual relative entropy method works with two time-dependent probability measures. Let us denote by $\mu_0^N$ the Gibbs local equilibrium associated to a deformation profile $\mathbf{r}_0$ and an energy profile $\mathbf{e}_0$  (see \eqref{gibbs} for the explicit formula).  As we  work in the diffusive scaling, we look at  the state { of the process at time $tN^2$.  We denote it by $\mu_t^N$ and we suppose that it starts from $\mu_0^N$}. Let  $\mu_{\mathbf{e}(t,\cdot),\mathbf{r}(t,\cdot)}^N$ be the Gibbs local equilibrium  associated to the profiles $\mathbf{r}(t,\cdot)$ and $\mathbf{e}(t,\cdot)$ which satisfy \eqref{h}\ \footnote{For the sake of readibility, in the following sections we will denote it by $\mu^N_{\beta_t(\cdot),\lambda_t(\cdot)}$, where $\beta_t(\cdot)$ and $\lambda_t(\cdot)$ are the two potential profiles associated to $\mathbf{r}(t,\cdot)$ and $\mathbf{e}(t,\cdot)$ (see  \eqref{rel2} and \eqref{gibbs}).}. If we denote by $f_t^N$ and $\phi_t^N$, respectively, the densities\footnote{The existence of these two densities is justified in Section \ref{subsec:method}.} of $\mu_t^N$ and $\mu_{\mathbf{e}(t,\cdot),\mathbf{r}(t,\cdot)}^N$  with respect to a reference equilibrium measure $\mu_*^N:=\mu_{1,0}^N$, we guess that $\phi_t^N$ is a good approximation of the unknown density $f_t^N$. We measure the distance between these two densities by their relative entropy  \begin{equation} H_N(t):=\int_{\Omega^N} f_t^N(\omega) \log \frac{f_t^N(\omega)}{\phi_t^N(\omega)}\ d\mu_*^N(\omega)\ . \end{equation}
Then, the strategy consists in proving that \begin{equation} \lim_{N \to \infty} \frac{H_N(t)}{N} = 0 \ , \label{eq:rel} \end{equation} and deducing that the hydrodynamic limit holds (for this last step, see  \cite{MR1707314}, \cite{MR1231642} or \cite{BOmanus}). In the context of diffusive systems, the relative entropy method works if the following conditions are satisfied. \begin{itemize}
\item First, the dynamics has to be \textit{ergodic}: the only time and space invariant measures for the infinite system, with finite local entropy, are given by mixtures of the Gibbs measures in infinite volume $\mu_{\beta,\lambda}$ (see \eqref{eq:infinite}).  From \cite{MR1278883}, we know that the velocity-flip model is ergodic in the sense above (see Theorem \ref{theo:ergod}   for a precise statement). 
\item Next, we need to establish the so-called \textit{fluctuation-dissipation equations} in the mathematics literature. Such equations express the microscopic current of energy (which here is not a discrete gradient) as the sum of a discrete gradient and a fluctuating term. { More precisely, the microscopic current of energy, denoted by $j_{x,x+1}$, is defined by the local energy conservation law}
\begin{equation} \L e_x=\nabla j_{x-1,x} \end{equation}
{where $\L$ is the generator of the infinite dynamics. The standard approach consists in proving that there exist functions $f_x$ and $h_x$ such that the following decomposition holds } \begin{equation}  j_{x,x+1} = \nabla f_x + \L h_x\ . \label{fluct} \end{equation}
{Equation \eqref{fluct} is called a microscopic fluctuation-dissipation equation. The term $\L h_x$, when integrated in time, is a martingale. Roughly speaking, $\L h_x$ represents rapid fluctuation, whereas $\nabla f_x$ represents dissipation. Gradient models are systems for which $h_x = 0$ with the previous notations.} In general, these equations are not explicit but we are able to compute them in our model (see \eqref{diff1} and \eqref{diff2}).
\item Finally, since we observe the system on a diffusive scale and the system is non-gradient, we need second order approximations. If we want to obtain the entropy estimate \eqref{eq:rel} of order $o(N)$, we can not work with the measure $\mu_{\mathbf{e}(t,\cdot),\mathbf{r}(t,\cdot)}^N$:  we have to correct the Gibbs local equilibrium state with a small term. This idea was first introduced in \cite{MR1395890} and then used in  \cite{MR1934159} for interacting Ornstein-Uhlenbeck processes,  and in \cite{MR2080952}  for the asymmetric exclusion process in the diffusive scaling. However, as far as we know, it is the first time that this is applied for a system with several conservation laws.

\end{itemize}

Recently, Even et al.   \cite{EveOll} used  the relative entropy method for a stochastically perturbed Hamiltonian dynamics which is quite close to the dynamics of this paper: the time evolution is governed by the same Hamiltonian of anharmonic oscillators but the process is perturbed by a different noise -- velocities are exchanged and not flipped. Besides, the boundary conditions are mechanical instead of periodic. Contrary to this paper, the model is studied in the hyperbolic scale, so that the authors do not need to modify the local equilibrium state. 

Up to present, the derivation of hydrodynamic equations for the harmonic oscillators perturbed by the velocity-flip noise is not rigorously achieved (see e.g. \cite{BerKan}), mainly because the control of large energies has not been considered so far. Indeed, to perform the relative entropy method, we need to control the moments  
\begin{equation}
\label{eq:mombound}
\int \left[\frac 1N \sum_{x\in \T_N} |p_x|^k\right] d\mu_t^N\ , 
\end{equation}
for all $k\geqslant 1$, uniformly in time and with respect to $N$. In fact, the { only first several moments} are necessary to cut-off large energies (as it is explained in Section \ref{subsec:cutoff}) and we need all the others to obtain the Taylor expansion which appears in the relative entropy method (see Proposition \ref{prop} and Lemma \ref{lem:sym}).
Usually, the entropy inequality \eqref{entropin} reduces the control of \eqref{eq:mombound} to the estimate of the following equilibrium exponential moments
$$\int \exp(\delta |p_x|^k) \ d\mu_{1, 0}^N$$ 
with $\delta>0$ small. Unfortunately, in our model, these integrals are infinite for all $k \geqslant 3$ and all $\delta>0$. 

To avoid this problem, we could cut-off large velocities by taking a relativistic kinetic energy (as done in \cite{MR1231642}). { Nevertheless, we should change the physics of the problem by modifying the Liouville operator, and consequently the fluctuation-dissipation equations would not be available any more.} Similar difficulties have already appeared in other models: in \cite{MR2192537}, Bertini et al. do not have these precious exponential moments to derive rigorously their results. In an other context, Bonetto et al. \cite{MR2518984} study the heat conduction in anharmonic crystals with self-consistent reservoirs, and need energy bounds to complete their results. Bernardin   \cite{MR2305383} deals with {a} harmonic chain perturbed by a stochastic noise which is different from ours but has the same motivation: energy is conserved, momentum is not. He derives the hydrodynamic limit for a particular value of the intensity of the noise. In this case the hydrodynamic equations are simply given by two decoupled heat equations. The author highlights that good energy bounds are necessary to extend his work to other values of the noise intensity. In fact, only the following weak form is proved in his paper:
\begin{equation} \lim_{N \to + \infty} \int \left[\frac{1}{N^2} \sum_{x\in \T_N} p_x^4\right] d\mu_t^N =0\ . \end{equation}
In this work, we get uniform control of (\ref{eq:mombound}) (Theorem \ref{theo:moments}), thanks to a remarkable property of our model: the set of convex combinations of Gaussian measures is preserved by the dynamics. This is one of the main technical novelties in our work.

The next section contains a more precise description of the results outlined here, along with the plan of the paper.

\section{The Model and the Main Results}\label{sec:model}
\subsection{Velocity-flip Model}\label{subsec:model}

We consider the unpinned harmonic chain perturbed by the momentum-flip noise. Each particle has the same mass that we set equal to 1. The configuration space is denoted by $\Omega^N:=(\R\times \R)^{\T_N}.$ 

A typical configuration is $\omega=(\r,\p)\in \Omega^N$, where $\r=(r_x)_{x \in \T_N}$ and $\p=(p_x)_{x \in \T_N}$. 

The generator of the dynamics is given by $\L_N:=\A_N+ \gamma  \S_N$, where, for any continuously differentiable function $f : \Omega^N \to \R$, \begin{equation}\A_N(f) := \sum_{x \in \T_N} [(p_{x+1}-p_x) \ \d_{r_x}f + (r_x-r_{x-1}) \ \d_{p_x}f]\end{equation}
and  \begin{equation} \S_N(f)(\r,\p):=\frac{1}{2} \sum_{x \in \T_N} [f(\r,\p^x)-f(\r,\p)]\ . \end{equation} Here $\p^x$ is the configuration obtained from $\p$ by the flip of $p_x$ into $-p_x$. The parameter $\gamma >0$ regulates the strength of the random flip of momenta. 

The operator $\A_N$ is the Liouville operator of a chain of interacting harmonic oscillators, and $\S_N$ is the generator of the stochastic part of the dynamics that flips at random time the velocity of one particle. The dynamics conserves two quantities: the total deformation of the lattice $ \mathcal{R}=\sum_{x \in \T_N} r_x$ and the total energy $ \mathcal{E}=\sum_{x \in \T_N} e_x$, where $ e_x=(p_x^2+r_x^2)/2$. Observe that the total momentum is no longer conserved.
 
The deformation and the energy define a family of invariant measures depending on two parameters.  For $\beta >0$ and $\lambda \in \R$, we denote by $\mu_{\beta,\lambda}^N$ the Gaussian product measure on $\Omega^N$ given by \begin{equation} \mu_{\beta,\lambda}^N(d\r,d\p)=\prod_{x \in \T_N} \frac{e^{-\beta e_x-\lambda  r_x} }{Z(\beta,\lambda)} dr_x dp_x\ . \end{equation} An easy computation gives that the partition function satisfies \begin{equation} Z(\beta,\lambda)=\frac{2\pi}{\beta} \exp\left(\frac{\lambda^2}{2 \beta}\right) \ . \end{equation}
 
In the following, we shall denote by $\mu[\cdot ]$ the expectation with respect to the measure $\mu$. We introduce $\mathbb{L}^2(\mu_{\beta,\lambda}^N)$, the space of functions $f$ defined on  $\Omega^N$ such that $\mu_{\beta,\lambda}^N[f^2]<+\infty$. This is a Hilbert space, on which $\A_N$ is antisymmetric and $\S_N$ is symmetric.
 
The thermodynamic relations between the averages of the conserved quantities $\bar{\mathbf{r}} \in \R$ and $\bar{\mathbf{e}} \in {(0,+\infty)}$, and the potentials $\beta \in (0,+\infty)$ and $\lambda \in \R$ are given by \begin{equation} \left\{\begin{aligned} \bar{\mathbf{e}}(\beta,\lambda):= \mu_{\beta,\lambda}^N[e_x]& = \frac{1}{\beta} + \frac{\lambda^2}{2\beta^2}\ , \\
\bar{\mathbf{r}}(\beta,\lambda):=\mu_{\beta,\lambda}^N[r_x] & =-\frac{\lambda}{\beta}\ . \end{aligned} \right. \label{rel2} \end{equation}
Let us notice that \begin{equation} \forall \ \beta \in (0,+\infty), \forall \ \lambda \in \R, \ \bar{\mathbf{e}}(\beta,\lambda) > \frac{\bar{\mathbf{r}}^2(\beta,\lambda)}{2} \ . \end{equation}
We assume that  the system is initially close to a local equilibrium (defined as the following).
 
 \begin{de} \label{de:loceq}
 A sequence $(\mu^N)_N$ of probability measures on $\Omega^N$ is a \emph{local equilibrium} associated to a deformation profile $\mathbf{r}_0 : \T \to \R$ and an energy profile $\mathbf{e}_0 : \T \to (0,+\infty)$ if for every continuous function $G : \T \to \R$ and for every $\delta >0$, we have \begin{equation} \left\{ \begin{aligned}\lim_{N \to \infty} \mu^N\left[ \left\vert \frac{1}{N}\sum_{x \in \T_N}G\left(\frac{x}{N}\right)r_x-\int_{\T} G({ q})\mathbf{r}_0({ q})d{ q}\right\vert > \delta \right] &=0\ , \\
 \lim_{N \to \infty} \mu^N\left[ \left\vert \frac{1}{N}\sum_{x \in \T_N}G\left(\frac{x}{N}\right)e_x-\int_{\T} G({ q})\mathbf{e}_0({ q})d{ q}\right\vert > \delta \right] &=0\ .\end{aligned} \right. \end{equation}
 \end{de}
 
 \begin{ex}\label{ex:loceq}
For any integer $N$ we define the probability measures \begin{equation} \mu_{\beta_0(\cdot),\lambda_0(\cdot)}^N(d\r,d\p)=\prod_{x \in \T_N} \frac{\exp(-\beta_0(x/N) e_x-\lambda_0(x/N)  r_x)}{Z(\beta_0(\cdot),\lambda_0(\cdot))} dr_x dp_x\ , \label{gibbs} \end{equation}  

where $\beta_0(\cdot)$ and $\lambda_0(\cdot)$ are related to $\mathbf{e}_0(\cdot)$ and $\mathbf{r}_0(\cdot)$ by  \eqref{rel2} $$\left\{\begin{aligned} \mathbf{e_0}(\cdot)& =\mathbf{\bar{e}}(\beta_0(\cdot),\lambda_0(\cdot))\ , \\ 
\mathbf{r_0}(\cdot) & = \mathbf{\bar{r}}(\beta_0(\cdot),\lambda_0(\cdot))\ .  \end{aligned}\right.  $$

Then, the sequence $\left(\mu^N_{\beta_0(\cdot),\lambda_0(\cdot)}\right)_N$ is a local equilibrium, { and it is called the \emph{Gibbs local equilibrium state} associated to the macroscopic profiles $\mathbf{e}_0$ and $\mathbf{r}_0$}. Both profiles are assumed to be continuous.
 \end{ex}
 
To establish the hydrodynamic limit corresponding to the two conservation laws, we look at the process with generator $N^2 \L_N$, namely in the diffusive scale. The configuration at time $tN^2$ is denoted by $\omega_t^N$, and the law of the process $(\omega_t^N)_{t \geqslant 0}$ is denoted by $\mu_t^N$. 
 
 \subsection{The Thermodynamic Entropy}\label{subsec:entropy}

The function \begin{equation} S(\mathbf{e},\mathbf{r})=\inf_{\beta>0, \lambda\in \R} \left\{ \lambda \mathbf{r} + \beta \mathbf{e} + \log Z(\beta,\lambda) \right\} \end{equation} is called the \textit{thermodynamic entropy}. An easy computation, coming from the explicit expression of the partition function, gives \begin{equation} S(\mathbf{e},\mathbf{r})=1 +\log(2\pi) + \log\left(\mathbf{e}-\frac{\mathbf{r}^2}{2}\right), \ \text{ when } \mathbf{e}-\frac{\mathbf{r}^2}{2} >0\ . \end{equation} The relations \eqref{rel2} can be inverted according to \begin{equation} \lambda(\mathbf{e},\mathbf{r})=\frac{\d S(\mathbf{e},\mathbf{r})}{\d \mathbf{r}}\ , \qquad \beta(\mathbf{e},\mathbf{r})=\frac{\d S(\mathbf{e},\mathbf{r})}{\d \mathbf{e}}\ . \end{equation} \textbf{Remark}. These two equalities, together with \eqref{rel2}, show that there exists a bijection between the two sets $\left\{(\beta, \lambda) \in \R^2 \ ; \ \beta >0 \right\}$ and $\left\{ (\mathbf{e},\mathbf{r}) \in \R^2 \ ; \ \mathbf{e} > \mathbf{r}^2/2 \right\}$. From the equations above, the inverted relations can be written as \begin{equation} \lambda(\mathbf{e},\mathbf{r})=-\frac{\mathbf{r}}{\mathbf{e}-\mathbf{r}^2/2}\ ,\qquad \beta(\mathbf{e},\mathbf{r})=\frac{1}{\mathbf{e}-\mathbf{r}^2/2}\ . \end{equation} {We denote by $\Psi$ the function }$$ \begin{array}{c c c c} \Psi :  & \left\{ (\mathbf{e},\mathbf{r}) \in \R^2 \ ; \ \mathbf{e} > \mathbf{r}^2/2 \right\} & \to & \left\{(\beta, \lambda) \in \R^2 \ ; \ \beta >0 \right\}\\ & (\mathbf{e},\mathbf{r})& \mapsto &\left( \ds  \frac{1}{\mathbf{e}-\mathbf{r}^2/2},\ -\frac{\mathbf{r}}{\mathbf{e}-\mathbf{r}^2/2} \right)\ .\end{array}$$

If ${\eta}=(\mathbf{e},\mathbf{r})$ and $\chi=(\beta,\lambda)$ satisfy the relations \eqref{rel2}, then  ${\eta}$ and $\chi$ are said \textit{in duality} and we have \begin{equation} -S(\mathbf{e},\mathbf{r})+\log Z(\beta,\lambda)={ -} \eta \cdot \chi\ . \label{fenchel} \end{equation}

Here, the notation $a \cdot b $ stands for the usual scalar product between $a$ and $b$.

\subsection{Hydrodynamic Equations}{\label{subsec:equations}

Let $\mu$ and $\nu$ be two probability measures on the same measurable space $(X, \mathcal{F})$. We define the relative entropy $H(\mu \vert \nu)$ of the probability measure $\mu$ with respect to the probability measure $\nu$ by \begin{equation} H(\mu \vert \nu)=\sup_{f} \left\{ \int_X f \ d\mu - \log \left(\int_X e^f \ d\nu\right) \right\}\ , \end{equation} where the supremum is carried over all bounded measurable functions $f$ on $X$.

The Gibbs states in infinite volume are the probability measures $\mu_{\beta,\lambda}$ on $\Omega=(\R\times\R)^\Z$ given by \begin{equation} \mu_{\beta,\lambda}(d\r,d\p)=\prod_{x \in \Z} \frac{e^{-\beta e_x -\lambda r_x}}{Z(\beta,\lambda)}dr_x dp_x\ . \label{eq:infinite} \end{equation}
We denote by $\tau_x \varphi$ the shift of $\varphi$: $(\tau_x \varphi)(\omega)=\varphi(\tau_x \omega)=\varphi(\omega(x+\cdot\ ))$. In this article the following theorem  is proved.
 \begin{theo}\label{theo:hydro} Let $(\mu_0^N)_N$ be a sequence of probability measures on $\Omega^N$ which is a local equilibrium associated to a deformation profile $\mathbf{r}_0$ and an energy profile $\mathbf{e}_0$ such that $\mathbf{e}_0 > \mathbf{r}_0^2/2$ (see \eqref{gibbs}). We denote by $\beta_0$ and $\lambda_0$ the potential profiles associated to $\mathbf{r}_0$ and $\mathbf{e}_0$: $$(\beta_0,\lambda_0):=\Psi(\mathbf{e}_0,\mathbf{r}_0)\ .$$
We assume that \begin{equation} H\left(\mu_0^N \vert \mu^N_{\beta_0(\cdot),\lambda_0(\cdot)}\right)=o(N)\end{equation} and that the initial profiles are continuous.

 We also assume that the energy moments are bounded: let us suppose that there exists a positive constant $C$ which does not depend on $N$ and $t$, such that \begin{equation} \forall \ k \geqslant 1,\ \mu_t^N \left[\sum_{x \in \T_N} e_x^{k} \right] \leqslant  (C k)^k \ N\ . \label{mom} \end{equation}
 Let $G$ be a continuous function on the torus $\T$ and $\varphi$ be a local function which satisfies the following property: there exists a finite subset $\Lambda \subset \Z$ and a constant $C>0$ such that, for all $\omega \in \Omega^N$, $ \varphi(\omega) \leqslant C\left(1+\sum_{i \in \Lambda} e_i(\omega)\right)$. Then,  \begin{equation} \mu_t^N \left[\left\vert \frac{1}{N} \sum_x G\left(\frac{x}{N}\right) \tau_x \varphi - \int_{\T} G(y) \ {\tilde{\varphi}(\mathbf{e}(t,q),\mathbf{r}(t,q))  dq} \right\vert \right] \xrightarrow[N \to \infty]{} 0 \end{equation} {where $\tilde{\varphi}$ is the grand-canonical expectation of $\varphi$: in other words, for any $(\mathbf{e}, \mathbf{r})\in \R^2$ such that $\mathbf{e} > \mathbf{r}^2/2$, let $(\beta,\lambda)=\Psi(\mathbf{e},\mathbf{r})$ then} \begin{equation} \tilde{\varphi}(\mathbf{e},\mathbf{r})=\mu_{\beta,\lambda}[\varphi]=\int_{(\R\times\R)^\Z} \varphi(\omega)\ d\mu_{\beta,\lambda}(\omega)\ . \end{equation} Besides,       $\mathbf{e}$ and $\mathbf{r}$ are defined on $\R_+ \times \T$  and  are solutions of \begin{equation} \left\{\begin{aligned} \d_t \mathbf{r} &=\frac{1}{\gamma} \d^2_q \mathbf{r}\ , \\
 \d_t \mathbf{e} & = \frac{1}{2\gamma} \d^2_q\left(\mathbf{e}+\frac{\mathbf{r}^2}{2}\right)\ ,\end{aligned} \right. \quad  q \in \T, \ t \in \R, \label{hydro} \end{equation} with the initial conditions $\mathbf{r}(\cdot,0)=\mathbf{r}_0(\cdot)$ and $\mathbf{e}(\cdot,0)=\mathbf{e}_0(\cdot)$.
 \end{theo}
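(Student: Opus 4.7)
The plan is to carry out the relative entropy method of Yau, suitably corrected by a second-order term in the spirit of Funaki--Uchiyama--Yau \cite{MR1395890}. Once the estimate $H_N(t)=o(N)$ is established, the convergence claim for a local function $\varphi$ satisfying the stated growth bound follows from the standard entropy-inequality / replacement argument (see e.g.\ \cite{MR1707314}, \cite{MR1231642}): the bound $\varphi\leq C(1+\sum_{i\in\Lambda}e_i)$ combined with the uniform moment bound \eqref{mom} controls tail contributions, while the one-block replacement identifies the block average of $\tau_x\varphi$ with $\tilde\varphi(\mathbf{e}(t,q),\mathbf{r}(t,q))$. I therefore concentrate on establishing $H_N(t)=o(N)$.

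Denote by $f_t^N$ and $\phi_t^N$ the densities, with respect to the reference measure $\mu_*^N$, of $\mu_t^N$ and of the \emph{corrected} local equilibrium
$$\phi_t^N \;=\; \frac{d\mu^N_{\beta_t(\cdot),\lambda_t(\cdot)}}{d\mu_*^N}\,\Bigl(1+\tfrac{1}{N}V_t\Bigr),\qquad V_t=\sum_{x\in\T_N} F\bigl(\beta_t(x/N),\lambda_t(x/N)\bigr)\,\tau_x h_x,$$
where $h_x$ is the local function appearing in the fluctuation--dissipation decomposition \eqref{fluct} of the energy current (explicit in this model, cf.\ \eqref{diff2}) and $F$ is a smooth coefficient chosen below. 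Yau's inequality then yields
$$\frac{dH_N(t)}{dt}\;\leq\;\int \frac{\bigl(N^2\L_N^* - \partial_t\bigr)\phi_t^N}{\phi_t^N}\,f_t^N\,d\mu_*^N.$$
The goal is to show that this integrand, once expanded, reduces to a sum of local translates of functions whose canonical mean vanishes at $(\beta_t(x/N),\lambda_t(x/N))$, plus controllable errors.

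The expansion proceeds as follows. The $\partial_t$ part brings down $\partial_t\beta_t(x/N)\cdot e_x + \partial_t\lambda_t(x/N)\cdot r_x$ together with contributions from $\partial_t V_t$; we substitute these using the duality $\Psi$ applied to the target PDE \eqref{hydro}. The action of $N^2\L_N^*$ on the Gibbs density produces the microscopic currents of the two conserved fields, each tested against the discrete gradient of the corresponding profile. For each current we invoke the fluctuation--dissipation decomposition \eqref{fluct} (explicit in the harmonic setting, \eqref{diff1}--\eqref{diff2}): the discrete-gradient parts, after summation by parts and a Taylor expansion of the profiles, yield the Laplacian terms of \eqref{hydro} with the correct coefficients $1/\gamma$ and $1/(2\gamma)$; the $\L_N h$ parts are cancelled, up to an $O(1/N)$ residue, by $N^2\L_N^*$ acting on $V_t/N$, provided $F$ is chosen so that the $2\times 2$ linear system matching the two fluctuation--dissipation equations is satisfied. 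The remaining residue is then a sum $\sum_x \Phi_x(\omega;\beta_t(x/N),\lambda_t(x/N))$ with $\mu_{\beta,\lambda}[\Phi_0(\cdot\,;\beta,\lambda)]=0$. To integrate this against $f_t^N$ we apply the diffusive one-block estimate: ergodicity of the symmetric part $\S_N$ (Theorem \ref{theo:ergod}) replaces block averages of $\Phi_x$ by their grand-canonical expectations at the empirical block values of $r$ and $e$, and a Taylor expansion of these expectations around $(\beta_t(x/N),\lambda_t(x/N))$ closes the bound. A standard cutoff of large energies (Section \ref{subsec:cutoff}) handles the tails. Gronwall applied to the resulting inequality $\tfrac{d}{dt}H_N(t)\leq C\,H_N(t) + o(N)$, combined with $H_N(0)=o(N)$, gives $H_N(t)=o(N)$.

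The main obstacle is the construction of the second-order correction $V_t$ in the presence of two conservation laws: one must simultaneously match the non-gradient components of both the deformation and the energy currents, a coupled refinement that demands a compatibility condition on $F$ and that is genuinely new — previous applications of the Funaki--Uchiyama--Yau method (\cite{MR1395890}, \cite{MR1934159}, \cite{MR2080952}) treat only a single conserved quantity. In our setting the harmonic structure makes \eqref{fluct} solvable in closed form and reduces the compatibility condition to an explicit linear system. A secondary delicate point, emphasised in the introduction, is that the Taylor expansion of the grand-canonical moments cannot be justified through equilibrium exponential moments (which are infinite beyond order two), and must instead be driven to every order by the polynomial moment bound \eqref{mom}; this is precisely what Proposition \ref{prop} and Lemma \ref{lem:sym} are tailored to achieve.
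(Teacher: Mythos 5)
Your proposal follows the same route as the paper: Yau's relative entropy method with a local Gibbs state corrected by a second-order term built from the explicit fluctuation--dissipation relations \eqref{diff1}--\eqref{diff2}, a cut-off of large energies via the moment bounds \eqref{mom}, a one-block estimate, the second-order Taylor structure of the residue controlled by the large-deviation rate function, and Gronwall. Two points, however, would fail as literally written. First, you define the corrected density as $\frac{d\mu^N_{\beta_t(\cdot),\lambda_t(\cdot)}}{d\mu_{1,0}^N}\bigl(1+\tfrac{1}{N}V_t\bigr)$ with $V_t=\sum_{x}F(\cdot)\,\tau_x h_x$ a sum of $N$ unbounded local functions; since $\tfrac{1}{N}V_t$ is of order one and unbounded below, $1+\tfrac{1}{N}V_t$ takes negative values, so your $\phi_t^N$ is not a probability density and $H_N(t)$ and Yau's inequality are not defined. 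The paper places the correction in the exponent, $\prod_x\exp\bigl(\tfrac{1}{N}F(t,x/N)\cdot\tau_xh\bigr)$, which is positive by construction; the price is the estimate of the modified partition function $Z(\chi_t(\cdot))$ and the control of the quadratic and higher terms of $e^u-1-u$ in Lemma \ref{lem:sym}, both of which again rest on \eqref{mom}. Second, the one-block estimate cannot rely on ergodicity of the symmetric part $\S_N$ alone: the flip noise preserves every $|p_x|$ and every $r_x$ individually, so it is nowhere near ergodic; what is used is ergodicity of the full generator $\L=\A+\gamma\S$ (Theorem \ref{theo:ergod} and Corollary \ref{cor:ergod}), i.e., stationarity under the Liouville part combined with flip-invariance of the conditional law of $\p$. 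Finally, be explicit that the residue must be centered \emph{and} have its linear part in $(\xi_x-\eta)$ removed (the form $J_x-H(\eta)-DH(\eta)\cdot(\xi_x-\eta)$ of Proposition \ref{prop}): a merely mean-zero residue is not dominated by the rate function $I(\mathbf{z},\eta)$, which vanishes only quadratically at $\mathbf{z}=\eta$. With these corrections your outline coincides with the paper's proof.
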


\textbf{Remarks.} \begin{enumerate} \item In order to prove the theorem, we shall show afterwards that $H\left(\mu_t^N\vert \nu_{\chi_t(\cdot)}^N \right)=o(N). $

Here $\nu_{\chi_t(\cdot)}^N$ is a probability measure which is close to the  Gibbs local equilibrium  $\mu^N_{\beta(t,\cdot),\lambda(t,\cdot)}$ \eqref{gibbs}. The functions $(\beta(t,\cdot),\lambda(t,\cdot))$ are still related to $\mathbf{e}(t,\cdot)$ and $\mathbf{r}(t,\cdot)$ by \eqref{rel2}. 

This fact allows to establish the hydrodynamic limit in the sense given in the theorem. For a proof, we refer the reader to \cite{MR1231642}, {Corollary 2.2},  \cite{MR1707314}   or  \cite{BOmanus}.

\item Let us notice that the functions $\mathbf{e}, \mathbf{r}, \beta$ and $\lambda$ are smooth when $t >0$, since the system of partial differential equations is parabolic. Moreover, the function ${\beta}^{-1}= \mathbf{e}-\mathbf{r}^2/2$ satisfies \begin{equation} \d_t \left(\frac{1}{\beta}\right) = \frac{1}{2\gamma} \ \d_q^2\left(\frac{1}{\beta}\right)+ \frac{1}{\gamma} \ \vert \d_q r\vert^2 \geqslant \frac{1}{2\gamma} \d_q^2\left(\frac{1}{\beta}\right)\ . \end{equation} The supersolutions of the heat equation follow the minimum principle. Consequently, since there exists $c >0$ such that the initial profile $\beta_0$ has the following property $$\forall \ q \in \T_N,\ \beta_0(q) \geqslant c >0\ , $$ then we know that the function $\beta$ satisfies: \begin{equation}\forall \ q \in \T_N,\ \forall \ t \in [0,T], \  {\beta_t(q)} \geqslant c >0\ .\label{eq:beta}\end{equation}
 
\item After some integrations by parts, a simple computation shows that \begin{equation} \d_t\left\{ \int_{\T} S(\mathbf{r}(t,q),\mathbf{e}(t,q)) \ dq \right\} = \frac{1}{2\gamma} \int_{\T} \left[\frac{\d_q\beta(t,q)}{\beta(t,q)}\right]^2 + 2 \beta(t,q) \ [\d_qr(t,q)]^2 \ dq \geqslant 0 \end{equation} when $\mathbf{r}$ { and} $\mathbf{e}$ are the solutions of the hydrodynamic equations \eqref{hydro}. This fact is in agreement with the second thermodynamic principle.
 
 \end{enumerate}
 
In Section \ref{sec:bounds}, we will show that the hypothesis on moments bounds \eqref{mom} holds for a wide class of initial local equilibrium states. Before stating the theorem, let us give some definitions.
 
We denote by $\mathfrak{S}_N(\R)$ the set of real symmetric matrices of size $N$. The { correlation} matrix $C \in \mathfrak{S}_{2N}(\R)$ of a { probability} measure $\nu$ on $\Omega^N$  is the symmetric matrix $C=(C_{i,j})_{1\leqslant i,j \leqslant 2N}$  defined by \begin{equation} C_{i,j}:= \begin{cases}  \nu[r_i r_j] &  i,j \in \{1,...,N\}\ , \\ \nu[r_i p_j] &    i \in \{1,...,N\}, \ j \in \{N+1,...,2N \}\ , \\
\nu[p_i r_j]  &  i \in \{N+1,...,2N \}, \ j \in \{1,...,N\}\ , \\ 
\nu[p_i p_j]  &  i,j \in \{N+1,...,2N\}\ . \end{cases} \label{cor} \end{equation}

{Let us denote by $\Sigma_N$ the subset of $\R^{2N}\times\mathfrak{S}_{2N}(\R)$ defined by the following condition:} \begin{equation} (m,C) \in \Sigma_N \Leftrightarrow \begin{cases} m_k=0 & \text{ for all } k=N+1\dots 2N\ , \\
C_{i,j}=0 & \text{ for all } i \neq j\ , \\ C_{i,i} >0 & \text{ for all } i=1\dots 2N\ , \\ C_{i,i}-m_i^2=C_{i+N,i+N} & \text{ for all } i=1\dots N\ . \end{cases}\end{equation}

{Precisely, it means that $m$ is of the form $m=(m_1,\dots,m_N,0,\dots,0)$, and $C$ is a diagonal matrix whose diagonal components can be written as $(m_1^2+\alpha_1,\dots,m_N^2+\alpha_N,\alpha_1,\dots,\alpha_N)$, where $\alpha_i >0$ for all $i=1\dots N$.}

{ For $(m,C) \in \Sigma_N$, we denote by $G_{m,C}(\cdot)$ the Gaussian measure with mean $m$ and correlations given by the matrix $C$. The covariance matrix of $G_{m,C}(\cdot)$ is thus $C-m^tm$.} 

\begin{lem}\label{lem:matrix}  Let $\lambda(\cdot)$ and $\beta(\cdot)$ be two functions of class $\mathcal{C}^1$ defined on $\T$, and $\mu_{\beta(\cdot),\lambda(\cdot)}^N$ be the Gibbs local equilibrium defined by \eqref{gibbs}. If we denote by $m_{\beta(\cdot),\lambda(\cdot)}$ and $C_{\beta(\cdot),\lambda(\cdot)}$  respectively the mean vector and the correlation matrix of $\mu_{\beta(\cdot),\lambda(\cdot)}^N$, then we have $$ (m_{\beta(\cdot),\lambda(\cdot)},C_{\beta(\cdot),\lambda(\cdot)})\in\Sigma_N \ \text{ and } \ \mu_{\beta(\cdot),\lambda(\cdot)}^N=G_{m_{\beta(\cdot),\lambda(\cdot)},C_{\beta(\cdot),\lambda(\cdot)}}.$$\end{lem}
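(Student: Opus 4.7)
The plan is to observe that \eqref{gibbs} is a product measure whose one-site marginals factorize further into two independent Gaussians, from which all four defining conditions of $\Sigma_N$ follow by direct computation.

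First I would write, for each site $x\in\T_N$ and with the shorthand $\beta_x=\beta(x/N)$, $\lambda_x=\lambda(x/N)$,
\begin{equation*}
-\beta_x e_x-\lambda_x r_x=-\frac{\beta_x}{2}p_x^2-\frac{\beta_x}{2}\left(r_x+\frac{\lambda_x}{\beta_x}\right)^2+\frac{\lambda_x^2}{2\beta_x}.
\end{equation*}
This exhibits $\mu_{\beta(\cdot),\lambda(\cdot)}^N$ as the tensor product over $x\in\T_N$ of two independent one-dimensional Gaussian laws: $p_x\sim\mathcal{N}(0,\beta_x^{-1})$ and $r_x\sim\mathcal{N}(-\lambda_x/\beta_x,\beta_x^{-1})$. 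In particular the $2N$ coordinates are jointly Gaussian and mutually independent, so $\mu_{\beta(\cdot),\lambda(\cdot)}^N$ coincides with the Gaussian measure $G_{m,C}$ determined by its mean and second moments.

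Next I would extract $m=m_{\beta(\cdot),\lambda(\cdot)}$ and $C=C_{\beta(\cdot),\lambda(\cdot)}$ from the factorization, reading entries off \eqref{cor}. Setting $\alpha_x:=\beta_x^{-1}>0$ and $m_x:=-\lambda_x/\beta_x$ for $x=1,\dots,N$, I obtain $m=(m_1,\dots,m_N,0,\dots,0)$, together with
\begin{equation*}
C_{i,i}=\mu[r_i^2]=\alpha_i+m_i^2,\qquad C_{i+N,i+N}=\mu[p_i^2]=\alpha_i,\qquad 1\leqslant i\leqslant N,
\end{equation*}
while all off-diagonal entries $C_{i,j}$ with $i\neq j$ vanish because of the independence established above (the mixed $r$-$p$ blocks also vanish since the sum factors and $p$ is centered).

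Finally I would check the four bullets defining $\Sigma_N$: (i) the last $N$ components of $m$ are zero by construction; (ii) $C$ is diagonal by the independence argument; (iii) $C_{i,i}=\alpha_i+m_i^2>0$ and $C_{i+N,i+N}=\alpha_i>0$ since $\beta_x>0$ on $\T$; (iv) $C_{i,i}-m_i^2=\alpha_i=C_{i+N,i+N}$. This gives $(m,C)\in\Sigma_N$ and, combined with the Gaussian identification above, the equality $\mu_{\beta(\cdot),\lambda(\cdot)}^N=G_{m,C}$. There is no real obstacle here: the only thing worth stating explicitly is that the $\mathcal{C}^1$ assumption on $\beta(\cdot),\lambda(\cdot)$ is used solely through the pointwise positivity $\beta(q)>0$ needed for condition (iii); no regularity beyond measurability is actually required for the lemma itself.
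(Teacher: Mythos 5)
Your proof is correct and follows essentially the same route as the paper: both arguments read the mean vector and the diagonal correlation matrix directly off the explicit product form \eqref{gibbs}, identifying $p_x\sim\mathcal{N}(0,\beta_x^{-1})$ and $r_x\sim\mathcal{N}(-\lambda_x/\beta_x,\beta_x^{-1})$ and then checking the defining conditions of $\Sigma_N$. Your explicit completion of the square and the remark that only $\beta>0$ (not $\mathcal{C}^1$ regularity) is needed are accurate refinements of the paper's terser computation.
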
 \begin{proof}  This result comes from the explicit formula of $\mu_{\beta(\cdot),\lambda(\cdot)}^N$ given in \eqref{gibbs}. 
First, notice that  each momentum $p_x$ is centered under $\mu_{\beta(\cdot),\lambda(\cdot)}^N$ and \begin{equation} \mu_{\beta(\cdot),\lambda(\cdot)}^N[r_x]=-\frac{\lambda}{\beta}\left(\frac{x}{N}\right)\ . \end{equation} Second, we easily obtain the following expressions:
\begin{equation} m_{\beta(\cdot),\lambda(\cdot)}=\left(-\frac{\lambda}{\beta}\left(\frac{0}{N}\right),\cdots,-\frac{\lambda}{\beta}\left(\frac{N-1}{N}\right), \underbrace{0, \cdots, 0}_N\right)\ , \end{equation}

\begin{equation} C_{\beta(\cdot),\lambda(\cdot)}=\begin{pmatrix} D & 0_N \\ 0_N & D' \end{pmatrix}\qquad \text{ where } \left\{ \begin{aligned} D & =\text{diag}\left(\cdots, \frac{1}{\beta(x/N)} + \frac{\lambda^2(x/N)}{\beta^2(x/N)}, \cdots \right)\ , \\
D' & =\text{diag}\left(\cdots, \frac{1}{\beta(x/N)}, \cdots \right). \end{aligned} \right. \end{equation}
\end{proof}


{ Now we state our second main theorem, which will be proved in Section \ref{sec:bounds}.}

 \begin{theo}\label{theo:moments} We assume that the initial probability measure $\mu_0^N$ is a convex combination of Gibbs local { equilibrium states}. More precisely, let $\sigma$ be a {probability} measure whose support is included in {$\Sigma_N$}. We assume that $\sigma$ satisfies: \begin{equation}\text{for all } k \geqslant 1,\  \int \left[K(m,C)\right]^k \ d\sigma(m,C) < \infty\ , \end{equation} {where $\ds K(m,C):= \sup_{i=1\dots N} \left\{ C_{i,i} \right\}$.} We define the initial probability measure $\mu_0^N$ by \begin{equation} \mu_0^N(\cdot)=\int G_{ m,C}(\cdot) \ d\sigma({ m,C})\ . \label{init} \end{equation} 
 Then, \eqref{mom} holds, and the conclusions of Theorem \ref{theo:hydro} are valid.
  
 \end{theo}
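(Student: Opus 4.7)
The plan is to exploit the closure property mentioned in the introduction: the class of convex combinations of Gaussian measures is preserved by the dynamics generated by $\mathcal{L}_N = \mathcal{A}_N + \gamma \mathcal{S}_N$. First I would verify that if $\mu_0^N = \int G_{m, C}\, d\sigma(m, C)$, then for every $t \ge 0$ one can write $\mu_t^N = \int G_{m, C}\, d\tilde{\sigma}_t(m, C)$ for some probability measure $\tilde{\sigma}_t$. Indeed, conditionally on the Poissonian flip history, the microscopic dynamics is the composition of the Hamiltonian flow $e^{s J_N}$ (linear and orthogonal since the matrix $J_N$ associated to $\mathcal{A}_N$ is antisymmetric) with the successive sign-flips $J_x$ of momentum coordinates (also orthogonal); both operations send Gaussian laws to Gaussian laws. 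Taking the expectation over the flip history then yields a Gaussian mixture.

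Next, for each Gaussian $G_{m, C}$ appearing in this mixture I would use the classical Gaussian moment bound obtained from Wick's formula,
\[
G_{m, C}[e_x^k] \;\le\; (Ck)^k\, G_{m, C}[e_x]^k
\]
for a universal constant $C$, and then estimate
\[
\sum_{x \in \mathbb{T}_N} G_{m, C}[e_x]^k \;\le\; \bigl(\max_{x} G_{m, C}[e_x]\bigr)^{k-1}\, G_{m, C}[\mathcal{E}].
\]
The total energy is pathwise conserved since $\mathcal{L}_N(\mathcal{E}) = 0$, so $G_{m, C}[\mathcal{E}] = G_{m_0, C_0}[\mathcal{E}] \le N\, K(m_0, C_0)$, where $(m_0, C_0) \in \Sigma_N$ is the initial parameter carried by this trajectory. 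To handle the maximum of the local mean energy I would split $G_{m, C}[e_x] = \tfrac12\bigl((C - m m^t)_{xx} + (C - m m^t)_{N+x, N+x}\bigr) + \tfrac12(m_x^2 + m_{N+x}^2)$, and note that since both $e^{s J_N}$ and each $J_x$ act by conjugation on the covariance matrix $C - m m^t$, its spectrum is invariant along every trajectory; this yields $(C - m m^t)_{ii}(t) \le \|C_0 - m_0 m_0^t\| \le K(m_0, C_0)$ for every $i$ and $t$, providing a pathwise bound for the covariance contribution.

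Putting everything together and integrating against $\sigma$, the moment hypothesis $\int K(m, C)^k\, d\sigma < \infty$ produces the bound $\mu_t^N\bigl[\sum_x e_x^k\bigr] \le (C'k)^k N$ required by \eqref{mom}, so that Theorem~\ref{theo:hydro} applies directly.

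The main technical obstacle lies in controlling the mean-squared contribution $\sum_x (m_x(t)^2 + m_{N+x}(t)^2)^k$ uniformly in $N$ and $t$: although the Euclidean norm $\|m(t)\|_2$ is preserved pathwise on each flip realization, the naive estimate $\|m(t)\|_\infty \le \|m(t)\|_2 \le \sqrt{N\, K(m_0, C_0)}$ yields an $N$-dependent bound that is insufficient. One must exploit either the dispersive nature of the harmonic propagator on the torus $\mathbb{T}_N$, or the damping effect of the flip noise on the first moments -- as witnessed by $\tfrac{d}{dt}\|\mathbb{E}_{\mu_t^N}[(\mathbf{r}, \mathbf{p})]\|_2^2 = -2\gamma\, \|\mathbb{E}_{\mu_t^N}[\mathbf{p}]\|_2^2 \le 0$ -- in conjunction with the pointwise initial bound $m_{0, i}^2 \le K(m_0, C_0)$ inherent to the definition of $\Sigma_N$, to establish the required uniform control on $\max_x m_x(t)^2$ along each flip trajectory.
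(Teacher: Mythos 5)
Your setup coincides with the paper's (the Harris/Poisson graphical construction showing that convex combinations of Gaussians are preserved, followed by Gaussian moment bounds in terms of the evolving parameters $(m_t,C_t)$), but the proof is not complete: the ``main technical obstacle'' you flag at the end --- a bound on $\max_x m_x(t)^2$ uniform in $t$ and $N$ along each flip trajectory --- is a genuine gap, and the remedies you propose (dispersion of the harmonic propagator on $\T_N$, damping of the mean by the noise) are not how it is resolved and would be hard to make work: the identity you quote damps $\Vert \mathbb{E}[\p]\Vert_2$ only after averaging over the flip histories, not pathwise, and nothing soft prevents the $\ell^2$ mass of $m(t)$ from concentrating on a few sites along a given realization of the flips.

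The paper closes the argument with two observations you are missing. First (Lemma \ref{inequ}): since $C_t - m_t\,{}^tm_t$ is the covariance matrix of the evolved Gaussian, it is positive semidefinite, so $m_i(t)^2 \leqslant C_{i,i}(t)$ pathwise --- the squared means are dominated by the diagonal of the correlation matrix and never need to be controlled separately. Second (Lemma \ref{lem:cov}): because $C_t^\xi$ is obtained from the diagonal matrix $C_0$ by conjugation with orthogonal matrices ($e^{sA}$ with $A$ antisymmetric, and the flips $\Sigma_i$), one has $\mathrm{Tr}\big([C_t^\xi]^k\big)=\mathrm{Tr}(C_0^k)\leqslant 2N\,[K(m_0,C_0)]^k$, and the Jensen-type inequality $\sum_i M_{i,i}^k \leqslant \mathrm{Tr}(M^k)$, valid for symmetric positive semidefinite $M$, then gives $\sum_i [C_{i,i}(t)]^k \leqslant 2N\,K^k$ directly: no supremum over sites is ever taken, so the difficulty you worry about simply does not arise. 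Note finally that your own spectral-invariance argument, applied to $C_t$ rather than to $C_t - m_t\,{}^tm_t$ and combined with the first observation, already yields $m_x(t)^2 \leqslant C_{x,x}(t)\leqslant \Vert C_t\Vert = \Vert C_0\Vert = K(m_0,C_0)$ along each trajectory, which closes your gap without any dispersive or dissipative input; with that fix, the remainder of your argument goes through and is essentially equivalent to the paper's.
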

 
 \textbf{Remark.} As in \cite{BerKan}, we could consider a more general model, with a pinning potential. { Instead of the deformation $r_x$, we now introduce the position $q_x$ of the particle $x$.} The new pinning Hamiltonian is given by \begin{equation} \mathcal{H}_N^{p}=\sum_{x\in \T_N} \frac{p_x^2}{2} + \nu^2 \sum_{x\in\T_N} \frac{q_x^2}{2} + \sum_{ \substack{ \vert x-y \vert =1, \\ x,y \in \T_N}} \frac{(q_x-q_y)^2}{4}\ . \end{equation} The strength of the pinning potential is regulated by the parameter $\nu > 0$. The energy of site $x$ is now given by \begin{equation} e_x=\frac{p_x^2}{2} + \nu^2 \frac{q_x^2}{2} + \frac{1}{4} \sum_{y  ;  \vert x-y\vert =1} (q_x-q_y)^2\ . \end{equation}
The stochastic operator  $\S_N^{p}$ remains equal to $\S_N$, and the Liouville operator $\A_N^{p}$ can be written as follows: \begin{equation}\A_N^{p} = \sum_{x \in \T_N} \left\{ p_x \ \d_{q_x} - [(\nu^2-\Delta)q]_x \ \d_{p_x} \right\}\ , \end{equation} where $\Delta$ is the discrete Laplacian: $(\Delta u)_x=u_{x+1}+u_{x-1}-2u_x$.

Because of the presence of the pinning, the bulk dynamics conserves only one quantity: the total energy  $\sum_x e_x$. It follows that the Gibbs equilibrium measures $\mu_{\beta}^N$ are fully characterized by the temperature $\beta^{-1}$. Under $\mu_\beta^N$, the variables $p_x$ are independent of the $q_x$ and are independent identically Gaussian variables of variance $\beta^{-1}$. The $q_x$ are distributed according to a centered Gaussian process with covariances given by \begin{equation} \mu_{\beta}^N(q_xq_y)={ \Gamma(x-y),  \text{ such that } \left[(\nu^2-\Delta)\right]\Gamma(z)=\frac 1 \beta \mathds{1}_{z=0}\ .} \end{equation} Observe that there exists $C:=C(\nu)$ independent { of} $N$ such that $\left\vert \mu_{\beta}^N(q_xq_y) \right\vert \leqslant C^{-1} e^{-C \vert x-y\vert}$ for any $N \geqslant 1$.

These correlations make computations more technical, but the hydrodynamic limit can be established by following the proof here (in \cite{BerKan}, { Section 3.2}, a heuristic argument is given). Assume that the system is initially distributed according to a Gibbs local equilibrium associated to the energy profile $\mathbf{e}_0(\cdot)$, and define $\mathbf{e}(t,\cdot)$ as the evolved profile in the diffusive scale. Then, if the energy moments are bounded like \eqref{mom}, $\mathbf{e}$ is the solution of the following heat equation \begin{equation} \left\{ \begin{aligned} & \d_t\mathbf{e}  = \d_q(D(\mathbf{e}) \d_q \mathbf{e})\ , \\ & \mathbf{e}(0,\cdot)=\mathbf{e}_0(\cdot)\ , \end{aligned} \right. \quad  q \in \T, \ t \in \R, \end{equation} where $D(\mathbf{e})$ is the \textit{diffusivity} given by \begin{equation} D:=D(\mathbf{e})=\frac{1/\gamma}{2+\nu^2+\sqrt{\nu^2(\nu^2+4)}}\ . \end{equation}
In our model, where the state space is not compact, what matters is the existence of moments bounds. We will see { in Section \ref{sec:bounds}} that this existence can be easily justified by following the same ideas which work for the unpinned model.

For the sake of simplicity, we will denote by $\mathbf{e}_t(\cdot)$, $\mathbf{r}_t(\cdot)$, $\lambda_t(\cdot)$ and $\beta_t(\cdot)$, respectively, the functions $q \to \mathbf{e}(t,q)$, $q \to \mathbf{r}(t,q)$, $q \to \lambda(t,q)$, and $q\to \beta(t,q)$ defined on $\T$.

 \subsection{Ergodicity of the Infinite Velocity-flip Model}\label{subsec:ergo}
 
 We conclude this part by giving the theorem of ergodicity, which is proved in \cite{BOmanus}, { Sections 2.2 and 2.4.2}, by following the ideas of \cite{MR1278883}. Let us define, for all finite { subsets} $\Lambda \subset \Z$, and for two probability measures $\nu$ and $\mu$ on $\Omega:=(\R\times\R)^{\Z}$, the restricted relative entropy \begin{equation} H_{\Lambda}(\nu\vert\mu):=H(\nu_{\Lambda}\vert\mu_{\Lambda}) \end{equation} where $\nu_{\Lambda}$ and $ \mu_{\Lambda}$ are the marginal distributions of $ \nu$  and $ \mu $  on $ \Omega.$

The Gibbs states in infinite volume are the probability measures $\mu_{\beta,\lambda}$ on $\Omega$ given by \begin{equation} d\mu_{\beta,\lambda}:=\prod_{x \in \Z} \frac{e^{-\beta e_x -\lambda r_x}}{Z(\beta,\lambda)}dr_x dp_x\ . \end{equation} {The formal generator of the infinite dynamics is denoted by $\L$.}
 
 \begin{theo} \label{theo:ergod} Let $\nu$ be a probability measure on the configuration space $\Omega$ such that \begin{enumerate}
 \item $\nu$ has finite density entropy: there exists $ C >0,  \text{ such that for all finite {subsets} } \Lambda \subset \Z$, \begin{equation} H_{\Lambda}(\nu \vert \mu_{\ast}) \leqslant C \vert \Lambda \vert\ ,\end{equation} with $\mu_{\ast}:=\mu_{1,0}$ a reference Gibbs measure on $(\R \times \R)^{\Z}$,
 \item $\nu$ is translation invariant,
 \item $\nu$ is stationary, i.e. for any compactly supported and differentiable function $F(\r,\p)$, \begin{equation} \int \A (F)\ d\nu=0\ . \end{equation}
 \item { the conditional probability distribution of $\p$ given the probability distribution of $\r$, denoted by $\nu(\p \vert \r)$,} is invariant by any flip ${ \p \to \p^x}$, with $x \in \Z$.
 \end{enumerate}
 
 Then, $\nu$ is a mixture of { infinite} Gibbs states.
 \end{theo}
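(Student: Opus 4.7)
The natural target is a DLR-type characterization: I plan to show that, conditionally on the values of the two conserved quantities in any finite box, $\nu$ coincides with the corresponding microcanonical measure on the level set. Once this is established, the equivalence of ensembles together with the ergodic decomposition of translation-invariant probability measures delivers the claimed representation as a mixture of the infinite-volume Gibbs states $\mu_{\beta,\lambda}$, the almost-sure limits of the empirical averages of $e_x$ and $r_x$ providing the parameters.

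The first step is to exploit hypothesis (4). Since invariance under a single flip $\p \mapsto \p^x$ is preserved under composition, $\nu(\p \vert \r)$ is invariant under the subgroup of finitely-supported sign-flip patterns. Equivalently, for every finite $\Lambda \subset \Z$, conditionally on $\r$ and on $(|p_x|)_{x \in \Lambda}$, the signs $(\mathrm{sgn}(p_x))_{x \in \Lambda}$ are i.i.d.\ uniform on $\{\pm 1\}$. In particular, any observable that is odd in some single $p_x$ has zero $\nu$-expectation, so the stationarity hypothesis (3) can only be squeezed out of the "even" sector.

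Next I would extract effective identities from $\int \A(F)\, d\nu = 0$. Recall
\[
\A(F)=\sum_x \bigl[(p_{x+1}-p_x)\,\d_{r_x} F+(r_x-r_{x-1})\,\d_{p_x} F\bigr].
\]
Testing against smooth compactly supported functions of the form $F=G(\r)H(\p^2)$, which are even in each $p_x$, and integrating by parts in the $r$-variables --- legitimate because finite density entropy (hypothesis (1)) forces the finite-volume marginals of $\nu$ to be absolutely continuous with respect to Lebesgue --- yields a family of identities relating, on every finite box $\Lambda$, the conditional distribution of $(r_x,p_x^2)_{x \in \Lambda}$ to its boundary data. After discarding boundary contributions via translation invariance (2), these identities amount to saying that the conditional law of $\nu$ on $\Lambda$, given $\sum_{x \in \Lambda} e_x$, $\sum_{x \in \Lambda} r_x$ and the outside configuration, is the uniform measure on the corresponding level set.

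The main technical obstacle is the non-compactness of the single-site state space: unlike the lattice-gas models of \cite{MR1278883}, velocities and deformations are unbounded, so the boundary terms produced by the integration by parts must be handled carefully. This is precisely the role of hypothesis (1): via the entropy inequality, finite density entropy relative to the Gaussian reference $\mu_\ast$ gives uniform control of polynomial moments on finite boxes, enough to justify the integration by parts and to discard boundary contributions in the thermodynamic limit. Once the microcanonical structure is established box by box, the equivalence of ensembles --- transparent here thanks to the explicit Gaussian form of $\mu_{\beta,\lambda}$ and its exponential decorrelation --- combined with the ergodic decomposition of the translation-invariant measure $\nu$ yields the claimed mixture representation, as carried out in \cite{BOmanus}, Sections 2.2 and 2.4.2.
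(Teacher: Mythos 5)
First, a point of comparison: the paper does not actually prove Theorem \ref{theo:ergod}. It is quoted from \cite{BOmanus}, Sections 2.2 and 2.4.2, which in turn follows the ideas of \cite{MR1278883}. So your proposal cannot be checked against an in-paper argument; it has to stand on its own, and as written it is an outline of the known strategy rather than a proof.

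The outline is the right one in spirit (exploit the flip-invariance of $\nu(\p\vert\r)$ to reduce to observables even in each $p_x$, extract identities from $\int \A(F)\,d\nu=0$ by integration by parts, then conclude via equivalence of ensembles and ergodic decomposition), but essentially all of the mathematical content of the theorem sits in the one sentence you do not justify: that the identities obtained from testing against $F=G(\r)H(\p^2)$ ``amount to saying'' that the conditional law of $\nu$ on a finite box $\Lambda$, given $\sum_{x\in\Lambda}e_x$, $\sum_{x\in\Lambda}r_x$ and the exterior configuration, is microcanonical. This does not follow from a formal integration by parts. The Liouville operator does not preserve the energy or the deformation of a finite box --- $\A(\sum_{x\in\Lambda}e_x)$ and $\A(\sum_{x\in\Lambda}r_x)$ are differences of boundary currents $p_xr_{x-1}$ and $p_x$ --- so stationarity under $\A$ yields no closed relation for the conditional structure inside $\Lambda$; the actual argument requires a careful choice of test functions, a limiting procedure $\Lambda\uparrow\Z$ in which the boundary terms are controlled by translation invariance and the entropy bound, and more regularity of the conditional densities than the bare absolute continuity that hypothesis (1) gives (one needs weak differentiability of the logarithm of the density, obtained by mollification or resolvent arguments, before the pointwise identity $\sum_x(p_{x+1}-p_x)\,\d_{r_x}\log f+(r_x-r_{x-1})\,\d_{p_x}\log f=0$ can even be written). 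Until that central step is carried out, the proposal restates the strategy of \cite{MR1278883} without supplying the proof.
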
 
 
\begin{cor} \label{cor:ergod} If $\nu$ is a probability measure on $\Omega$ satisfying 1, 2 and if $\nu$ is stationary in the sense that: for any compactly supported and differentiable function $F(\r,\p)$, \begin{equation} \int \L(F) \ d\nu=0\ , \end{equation} then $\nu$ is a mixture of { infinite} Gibbs states.

\end{cor}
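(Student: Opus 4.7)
The plan is to reduce to Theorem \ref{theo:ergod} by deriving its conditions 3 and 4 from the hypotheses of the corollary. I would first observe that condition 4 is the essential step: as soon as the conditional distribution of $\p$ given $\r$ is flip-invariant, one has $\int \S F\, d\nu = 0$ for every compactly supported differentiable $F$, and the assumed $\int \L F\, d\nu = 0$ then immediately gives $\int \A F\, d\nu = 0$, which is condition 3. Thus everything hinges on establishing condition 4.

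To prove it I would exploit the decomposition $\L = \A + \gamma \S$ together with the finite density entropy hypothesis. The reference Gibbs measure $\mu_* = \mu_{1,0}$ is both flip-invariant and $\L$-stationary, and on $L^2(\mu_*)$ the operator $\A$ is antisymmetric while $\gamma \S$ is symmetric. Writing $h_\Lambda = d\nu_\Lambda / d\mu_{*,\Lambda}$ for the density in a finite box $\Lambda \subset \Z$ (well-defined thanks to condition 1), the classical entropy production computation for the semigroup generated by $\L$ yields
\begin{equation*}
\frac{d}{dt} H_\Lambda(\nu_t \vert \mu_*)\Big|_{t=0} \leqslant -\gamma \sum_{x \in \Lambda} \int \left[\sqrt{h_\Lambda}(\r,\p^x) - \sqrt{h_\Lambda}(\r,\p)\right]^2 d\mu_{*,\Lambda} + b_\Lambda,
\end{equation*}
where $b_\Lambda$ collects the boundary contributions generated by the Hamiltonian part $\A$, and should be of order $o(\vert \Lambda \vert)$ thanks to the locality of $\L$ and the finite density entropy. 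Since $\nu$ is stationary the left-hand side vanishes; dividing by $\vert \Lambda \vert$, letting $\Lambda \uparrow \Z$ and invoking translation invariance, the per-site flip Dirichlet form must vanish, which forces $h_\Lambda$ to be invariant under each individual flip $\p \to \p^x$. This is precisely condition 4.

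With conditions 1--4 all in hand, Theorem \ref{theo:ergod} applies directly and produces $\nu$ as a mixture of infinite Gibbs states. I expect the principal obstacle to lie in the rigorous justification of the entropy production inequality in infinite volume: only the entropy density (not the total entropy) is finite, and the Hamiltonian operator $\A$ couples $\Lambda$ to its complement through boundary terms. Controlling $b_\Lambda = o(\vert \Lambda \vert)$ requires the careful finite-box localization developed by Fritz--Funaki--Lebowitz \cite{MR1278883}; once this is available, the reduction to Theorem \ref{theo:ergod} is immediate.
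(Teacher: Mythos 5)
Your reduction is correct and is essentially the argument the paper relies on (it defers the proof to \cite{BOmanus} and \cite{MR1278883}, where precisely this free-energy/entropy-production computation shows that stationarity plus finite entropy density forces the flip Dirichlet form to vanish, yielding condition 4, after which condition 3 follows by subtracting $\gamma\int \S F\, d\nu=0$ from $\int \L F\, d\nu=0$). You also correctly identify the real technical work as the control of the boundary terms $b_\Lambda$ coming from $\A$ in the finite-box localization, which in this one-dimensional setting are $O(1)=o(\vert\Lambda\vert)$.
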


The outline of the rest of the paper is as follows. In the next section we expose the strategy of the proof. We introduce the relative entropy $H_N(t)$ of $\mu_t^N$ with respect to a corrected local equilibrium, and  we prove a Gronwall estimate of the entropy production of the form \begin{equation} \d_t H_N(t) \leqslant C \ H_N(t)+o(N)\ , \label{gron} \end{equation} where $C >0$ does not depend on $N$. In Section \ref{sec:bounds} we prove Theorem \ref{theo:moments}.
 
We suppose that $t$ belongs to a compact set $[0,T]$, $T$ fixed. All estimates are uniform in $t \in [0,T]$.

\section{Entropy Production } \label{sec:entropy}

\subsection{Introduction to the Method}\label{subsec:method}

For the sake of simplificity, we denote all couples of the form $\ds (\beta(\cdot),\lambda(\cdot))$ by $\chi(\cdot)$. 

The corrected Gibbs local equilibrium state $\nu^N_{\chi_t(\cdot)}$ is defined by \begin{equation}\nu^N_{\chi_t(\cdot)}:=\frac{1}{Z(\chi_t(\cdot))}  \prod_{x \in \T_N} \exp\left(-\beta_t\left(\frac{x}{N}\right) e_x-\lambda_t\left(\frac{x}{N}\right)  r_x+\frac{1}{N}F\left(t,\frac{x}{N}\right)\cdot \tau_xh(\r,\p) \right)dr_x dp_x\end{equation} where  $Z(\chi_t(\cdot))$ is the partition function and $F$, $h$ are functions which will be precised later on. { The notation $a \cdot b $ still stands for the usual scalar product between $a$ and $b$}. An estimate of the partition function $Z(\chi_t(\cdot))$ is performed in  Appendix \ref{appa}. 

We are going to use the relative entropy method, with the corrected local Gibbs state $\nu_{\chi_t(\cdot)}^N$ instead of the usual one  $\mu^N_{\chi_t(\cdot)}$. We define \begin{equation} H_N(t):=H\left(\mu_t^N \vert \nu^N_{\chi_t(\cdot)}\right)=\int_{\Omega^N} f_t^N(\omega) \log \frac{f_t^N(\omega)}{\phi_t^N(\omega)} d\mu^N_{1,0}(\omega)\ , \end{equation} where  $f_t^N$ { is the density of $\mu_t^N$ with respect to the reference measure $\mu_{1,0}^N$. This is a solution, in the sense of the distributions, of the Fokker-Planck equation}
\begin{equation} \d_t f_t=N^2\ \L_N^*f_t \end{equation} {where $\L_N^*=-\A_N+\gamma S_N$ is the adjoint of $\L_N$ in $\mathbb{L}^2(\mu_{1,0}^N)$.  In the same way, $\phi_t^N$ is the density of $\nu_{\chi_t(\cdot)}^N$ with respect to $\mu_{1,0}^N$ (which here is easily computable).}

Thus, our purpose is now to prove \eqref{gron}. We begin with the following lemma.

\begin{lem}\label{entropy}
\begin{equation} \d_t H_N(t) \leqslant \int \frac{1}{\phi_t^N}\left(N^2\L_N^*\phi_t^N-\d_t\phi_t^N\right) f_t^N d\mu_{1,0}=\mu_t^N \left[  \frac{1}{\phi_t^N}\left(N^2\L_N^*\phi_t^N-\d_t\phi_t^N\right) \right]\ . \end{equation}

\end{lem}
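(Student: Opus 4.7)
The plan is to compute $\d_t H_N(t)$ directly, use the Fokker-Planck equation for $f_t^N$, and bound the resulting integral by combining the symmetry/antisymmetry structure of $\L_N$ with a Jensen-type inequality for the jump operator $\S_N$. Writing
\[H_N(t)=\int f_t^N\log f_t^N\, d\mu_{1,0}^N-\int f_t^N\log \phi_t^N\, d\mu_{1,0}^N\]
and differentiating in time, the contribution of $\int \d_t f_t^N\, d\mu_{1,0}^N$ vanishes because $f_t^N$ is a probability density at each time, which gives
\[\d_t H_N(t)=\int (\d_t f_t^N)\,\log\frac{f_t^N}{\phi_t^N}\, d\mu_{1,0}^N-\int \frac{f_t^N}{\phi_t^N}\,\d_t\phi_t^N\, d\mu_{1,0}^N.\]
Inserting $\d_t f_t^N=N^2\L_N^*f_t^N$, the second integral already matches the right-hand side of the claim, so the whole task reduces to establishing
\[N^2\int (\L_N^*f_t^N)\log\frac{f_t^N}{\phi_t^N}\, d\mu_{1,0}^N\;\leqslant\;N^2\int \frac{f_t^N}{\phi_t^N}\,\L_N^*\phi_t^N\, d\mu_{1,0}^N.\]

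Next I would split $\L_N^*=-\A_N+\gamma\S_N$ and treat the two pieces separately. For the antisymmetric Liouville part, the derivation property $\A_N\log(f/\phi)=\A_N f/f-\A_N\phi/\phi$, together with the antisymmetry of $\A_N$ in $\mathbb{L}^2(\mu_{1,0}^N)$ and the invariance of $\mu_{1,0}^N$ under $\A_N$ (equivalently $\A_N 1=0$ and $\A_N^*=-\A_N$), yields the \emph{exact} identity
\[\int (-\A_N f_t^N)\log\frac{f_t^N}{\phi_t^N}\, d\mu_{1,0}^N\;=\;\int \frac{f_t^N}{\phi_t^N}\,(-\A_N\phi_t^N)\, d\mu_{1,0}^N.\]
For the symmetric noise part I would first use $\S_N^*=\S_N$ to rewrite the contribution as $\gamma\int f_t^N\,\S_N\log(f_t^N/\phi_t^N)\, d\mu_{1,0}^N$, and then apply the elementary inequality $\log u\leqslant u-1$ site-by-site to the explicit jump representation $\S_N g(\omega)=\tfrac12\sum_x[g(\omega^x)-g(\omega)]$. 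This produces the pointwise bound $\S_N\log(f_t^N/\phi_t^N)\leqslant (\phi_t^N/f_t^N)\,\S_N(f_t^N/\phi_t^N)$, and one further use of the symmetry of $\S_N$ delivers $\int f_t^N\,\S_N\log(f_t^N/\phi_t^N)\, d\mu_{1,0}^N\leqslant\int (f_t^N/\phi_t^N)\,\S_N\phi_t^N\, d\mu_{1,0}^N$. Adding the two estimates and recombining $-\A_N+\gamma\S_N=\L_N^*$ proves the lemma.

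I do not expect a genuine obstacle: the argument is essentially Yau's inequality for the generator $\L_N$, which is classical in the relative entropy method. The only minor points requiring attention are the strict positivity of $\phi_t^N$ (immediate from its explicit exponential form, given the boundedness of the profiles and of the correction functions $F$ and $h$) and the justification of the integrations by parts hidden in the antisymmetric step, which is standard in view of the Gaussian tails of $\mu_{1,0}^N$. Any issue at $\{f_t^N=0\}$ can be circumvented by the usual regularisation $f_t^N\mapsto f_t^N+\varepsilon$ followed by $\varepsilon\to 0$.
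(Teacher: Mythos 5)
Your argument is correct and is exactly the classical proof of Yau's entropy inequality that the paper itself does not reproduce but cites (Kipnis--Landim, Ch.~6, Lemma~1.4 for the smooth case, and Bernardin--Olla for the general one): the cancellation $\int \d_t f_t^N\, d\mu_{1,0}^N=0$, the exact identity for the antisymmetric part via $\A_N^*=-\A_N$ and the derivation property, and the bound $\log u\leqslant u-1$ applied to the flip operator followed by one more use of $\S_N^*=\S_N$ all check out and reproduce the standard argument. The one point you treat too lightly is precisely the one the paper flags as the reason it defers to a second reference: $f_t^N$ is only known to solve $\d_t f_t=N^2\L_N^* f_t$ in the sense of distributions, so the steps ``differentiate $H_N(t)$ under the integral'' and ``insert $\d_t f_t^N=N^2\L_N^* f_t^N$'' are not immediately licit, and the difficulty is not confined to the set $\{f_t^N=0\}$ nor cured by adding $\varepsilon$; one must either establish enough regularity of $f_t^N$ or run the whole computation in integrated (weak) form, which is what the cited proof of Bernardin and Olla does. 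So your proposal is the right formal computation, with a genuine (if standard) regularity gap at the point the paper itself identifies.
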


\begin{proof} The case where $f_t^N$ is { smooth} is proved in  \cite{MR1707314}, Chapter 6, Lemma 1.4.  Here, we do not know that $f_t^N$ is smooth, so that we refer the reader to the proof in \cite{BerOlla}, { Section 3.2}, which can be easily followed.  \end{proof}

Now, we choose the correction term. We consider  \begin{equation} \left\{ \begin{aligned} F\left(t,\frac{x}{N}\right) & :=\left(-\beta'_t\left(\frac{x}{N}\right),  - \lambda'_t\left(\frac{x}{N}\right) \right)\ , \\
\tau_xh(\r,\p) & := \left(\frac{r_x}{2\gamma}\left(p_{x+1}+p_x+\frac{\gamma}{2}r_x\right), \frac{p_{x+1}}{\gamma} \right)\ . \end{aligned}\right.  \label{fonc} \end{equation} Thus, \begin{equation} \phi_t^N(\r,\p)=\frac{(Z(1,0))^n}{Z(\chi_t(\cdot))} \prod_{x\in \T_N}  \exp\left(e_x\left(-\beta_t\left(\frac{x}{N}\right)+1\right)-\lambda_t\left(\frac{x}{N}\right)r_x+\frac{1}{N} F\left(t,\frac{x}{N}\right) \cdot \tau_xh(\r,\p)\right)\ . \end{equation}

We define $\xi_x:=(e_x,r_x)$ and $\eta(t,{ q}):=(\mathbf{e}(t,{ q}),\mathbf{r}(t,{ q})).$ {If $f$ is a vectorial function, we denote  its differential by $Df$.}

In Appendix \ref{appa}, the following technical result is proved.

\begin{prop} \label{prop} The term $ (\phi_t^N)^{-1}\left(N^2\L_N^*\phi_t^N-\d_t\phi_t^N\right)$ is given by the sum of five terms in which a microscopic expansion up to the first order appears. In other words,  \begin{multline} \frac{1}{\phi_t^N}\left(N^2\L_N^*\phi_t^N-\d_t\phi_t^N\right) = \\
=\sum_{k=1}^5 \sum_{x \in \T_N} v_k\left(t,\frac{x}{N}\right) \left[J_x^k-H_k\left({\eta}\left(t,\frac{x}{N}\right)\right)-(DH_k)\left({\eta}\left(t,\frac{x}{N}\right)\right)\cdot \left({\xi}_x-{\eta}\left(t,\frac{x}{N}\right)\right)\right] +o(N)\label{tay} \end{multline} where $$\begin{array}{| c | c | c | c |}
\hline k & J_x^k & H_k(\mathbf{e},\mathbf{r}) & v_k(t,{ q}) \\
\hline \hline \ds 1 & \ds p_x^2+r_xr_{x-1}+2\gamma p_x r_{x-1} & \ds \mathbf{e}+\frac{\mathbf{r}^2}{2} & \ds \frac{-1}{2\gamma} \d^2_q \beta(t,{ q}) \\ 
2 & r_x+\gamma p_x & \mathbf{r} & \ds -\frac{1}{\gamma} \d^2_q\lambda(t,{ q}) \\
3 & p_x^2\ (r_x+r_{x-1})^2 & \ds (2\mathbf{e}-\mathbf{r}^2)  \left(\mathbf{e}+\frac{3}{2}\mathbf{r}^2\right) &\ds  \frac{1}{4\gamma} [\d_q \beta(t,{ q})]^2 \\
4 & p_x^2 \ (r_x+r_{x-1}) & \mathbf{r} \ (2\mathbf{e}-\mathbf{r}^2)& \ds \frac{1}{\gamma} \d_q\beta(t,{ q}) \ \d_q\lambda(t,{ q}) \\
5 & p_x^2 & \ds \mathbf{e}-\frac{\mathbf{r}^2}{2} & \ds \frac{1}{\gamma} [\d_q \lambda(t,{ q})]^2 \\ \hline \end{array}$$
\end{prop}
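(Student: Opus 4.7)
The plan is to expand $(\phi_t^N)^{-1}\bigl(N^2\mathcal{L}_N^*\phi_t^N-\partial_t\phi_t^N\bigr)$ explicitly and organise the result by powers of $N$. Using $\mathcal{L}_N^*=-\mathcal{A}_N+\gamma\mathcal{S}_N$ and that $\mathcal{A}_N$ is a derivation (so $\mathcal{A}_N\phi_t^N/\phi_t^N=\mathcal{A}_N\log\phi_t^N$), the quantity decomposes as
\begin{equation*}
-N^2\,\mathcal{A}_N\log\phi_t^N+\gamma N^2\,\frac{\mathcal{S}_N\phi_t^N}{\phi_t^N}-\partial_t\log\phi_t^N,
\end{equation*}
where $\log\phi_t^N=\sum_x[(1-\beta_t(x/N))e_x-\lambda_t(x/N)r_x+N^{-1}F(t,x/N)\cdot\tau_xh]$ modulo a spatially constant partition-function term depending only on $t$, which I handle separately via the estimate of Appendix~\ref{appa}.

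I would then evaluate each of the three pieces. For $\partial_t\log\phi_t^N$, differentiation in $t$ gives $\sum_x[-\partial_t\beta_t(x/N)\,e_x-\partial_t\lambda_t(x/N)\,r_x]$ plus an $O(N^{-1})$ contribution from $\partial_tF\cdot\tau_xh$ absorbed in the $o(N)$ remainder. For $-N^2\mathcal{A}_N\log\phi_t^N$, the conservation identities $\mathcal{A}_Ne_x=p_{x+1}r_x-r_{x-1}p_x$, $\mathcal{A}_Nr_x=p_{x+1}-p_x$ and the explicit form of $\mathcal{A}_N\tau_xh$, together with discrete summation by parts, convert each sum into Taylor-expandable discrete gradients of $\beta_t,\lambda_t$ at scale $1/N$; expansion to second order produces a leading term of order $N\beta'_t$ (resp.\ $N\lambda'_t$) and a subleading term of order $\beta''_t$ (resp.\ $\lambda''_t$). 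For the symmetric part, $e_x,r_x$ are flip-invariant so only the correction $N^{-1}F\cdot\tau_xh$ contributes, giving
\begin{equation*}
\frac{\mathcal{S}_N\phi_t^N}{\phi_t^N}=\frac12\sum_x\bigl[\exp(a_x/N)-1\bigr],
\end{equation*}
where $a_x=(p_x/\gamma)[\beta'_t(x/N)r_x+\beta'_t((x-1)/N)r_{x-1}+2\lambda'_t((x-1)/N)]$ records the change of the correction under the flip of $p_x$; Taylor expanding the exponential and multiplying by $\gamma N^2$ extracts a leading linear piece $\frac{\gamma N}{2}\sum_xa_x$ and a quadratic piece $\frac{\gamma}{4}\sum_xa_x^2$, higher orders being $o(N)$.

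The choice of $F$ and $h$ in \eqref{fonc} is precisely the microscopic fluctuation-dissipation decomposition of the energy current $j_{x,x+1}$, and is tailored so that the three order-$N$ fluctuation contributions — those from the $\mathcal{A}_N$ expansion applied to $(e_x,r_x)$, the new piece $-N\sum_xF\cdot\mathcal{A}_N\tau_xh$, and $\frac{\gamma N}{2}\sum_xa_x$ — cancel exactly, leaving only $O(1)$-per-site contributions. Substituting the hydrodynamic PDEs \eqref{hydro}, inverted via $\Psi$, to replace $\partial_t\beta_t,\partial_t\lambda_t$ by second spatial derivatives of $\beta_t,\lambda_t$, the surviving linear pieces assemble into $\sum_xv_1(t,x/N)J_x^1+\sum_xv_2(t,x/N)J_x^2$, while $\frac{\gamma}{4}\sum_xa_x^2$ expands into the three quadratic currents $J_x^3,J_x^4,J_x^5$ with the exact prefactors $v_3,v_4,v_5$ of the table. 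The last step is to subtract the first-order Taylor approximations $H_k(\eta(t,x/N))+DH_k(\eta(t,x/N))\cdot(\xi_x-\eta(t,x/N))$ from each $J_x^k$: the identity $H_k(\eta)=\mu_{\Psi(\eta)}[J_x^k]$ is a direct Gaussian computation from Lemma~\ref{lem:matrix}, the constant-in-$\omega$ subtraction compensates $\partial_t\log Z(\chi_t(\cdot))$ through the thermodynamic duality \eqref{fenchel}, and the linear-in-$\xi_x$ subtraction is absorbed after one more summation by parts against the smooth profiles $v_k(t,\cdot)$, producing only $o(N)$ errors. The main obstacle is the exact cancellation of the order-$N$ fluctuation terms, which requires tracking every $O(1/N)$ Taylor discrepancy between $\beta'_t(x/N)$ and $\beta'_t((x-1)/N)$ (and similarly for $\lambda'_t$) and matching it to the subleading parts of both the $\mathcal{A}_N$ expansion and the flip expansion; the bookkeeping is long but entirely dictated by the fluctuation-dissipation structure encoded in $h$.
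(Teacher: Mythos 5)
Your route is the same as the paper's: the decomposition into $-N^2\A_N\log\phi_t^N+\gamma N^2\S_N\phi_t^N/\phi_t^N-\d_t\log\phi_t^N$, the summation by parts against the conservation laws, the fluctuation--dissipation relations \eqref{diff1}--\eqref{diff2} producing the cancellation $-N\sum_x\L^*(\delta_x)-N\sum_x\A(\delta_x)+\gamma N\sum_x\S(\delta_x)=0$, and the identification of $J^3,J^4,J^5$ from the quadratic term $\frac{\gamma}{4}\sum_x a_x^2$ of the flip expansion (your $a_x$ agrees with the paper's $F_y$). However, one step as you state it would fail. The first-order Taylor subtraction $\sum_x v_k\,DH_k(\eta(t,x/N))\cdot(\xi_x-\eta(t,x/N))$ is \emph{not} ``absorbed after one more summation by parts against the smooth profiles, producing only $o(N)$ errors'': $\xi_x-\eta(t,x/N)$ is not a discrete gradient, and under $\mu_t^N$ this sum is a priori of order $N$ (it would be $o(N)$ only if $\mu_t^N$ were already known to be a local equilibrium with profile $\eta(t,\cdot)$, which is the conclusion, not a hypothesis). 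The correct mechanism, and the heart of the relative entropy method, is that this term is cancelled \emph{exactly} by the contribution $\sum_x\left[(e_x-\mathbf{e})\,\d_t\beta+(r_x-\mathbf{r})\,\d_t\lambda\right]$ coming from $-\d_t\log\phi_t^N$ (after centering via the duality \eqref{fenchel} applied to $\d_t\log Z$), through the two identities $\sum_k v_k\,\d_{\mathbf{e}}H_k=-\d_t\beta$ and $\sum_k v_k\,\d_{\mathbf{r}}H_k=\d_t\lambda$, which are equivalent to the hydrodynamic equations \eqref{hydro} written for $(\beta,\lambda)=\Psi(\mathbf{e},\mathbf{r})$. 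This is precisely where the PDE is selected; it cannot be relegated to an error term. (A separate, purely deterministic integral identity $\int_{\T}\sum_k v_kH_k\,dq=0$ is what makes the zeroth-order subtraction $\sum_{x,k}v_kH_k(\eta(t,x/N))$ of order $o(N)$.)

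A second, smaller omission: your ``higher orders being $o(N)$'' for the exponential expansion of the flip term, and the $O(N^{-3})$ remainders of the spatial Taylor expansions, are exactly the places where the moment bounds \eqref{mom} (for \emph{all} $k$) and the entropy inequality \eqref{entropin} are needed; since $\mu^N_{1,0}$ has no exponential moments of $|p_x|^k$ for $k\geqslant 3$, these remainders cannot be dismissed by a soft argument, and controlling them is the reason Theorem \ref{theo:moments} exists. You should state explicitly that the bound $\mu_t^N[\sum_x|F_y|^k]\leqslant(Ck)^kN$ is what makes $N^2\sum_{k\geqslant 3}(Ck)^k/(k!\,N^k)$ vanish.
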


A priori  the first term on the right-hand side of \eqref{tay} is of order $N$, but we want to take advantage of these microscopic Taylor expansions. First, we need to {cut-off large energies in order to work with bounded variables only}. Second, the strategy consists in performing a one-block estimate: we replace the empirical truncated current, which is averaged over a microscopic box centered at $x$,  by its mean with respect to a Gibbs measure with the parameters corresponding to the microscopic averaged profiles.

A one-block estimate will be performed for each term of the form  \begin{equation} 
\sum_{x \in \T_N} v_k\left(t,\frac{x}{N}\right) \left[J_x^k-H_k\left({\eta}\left(t,\frac{x}{N}\right)\right)-(DH_k)\left({\eta}\left(t,\frac{x}{N}\right)\right)\cdot\left({\xi}_x-{\eta}\left(t,\frac{x}{N}\right)\right)\right]\ . \label{taylor}
\end{equation}

In the following the index $k$ is omitted, whenever this does not cause confusion. We follow the lines of the proof given in \cite{BOmanus}, { Section 3.3} and inspired from \cite{MR1231642}.  A sketch of the proof for the one-block estimate is given in Appendix \ref{appb}.

\subsection{Cut-off of Large Energies}\label{subsec:cutoff}

For $x \in \T_N$, we define $A_{x,M}:=\{e_x+e_{x-1} \leqslant M\}$,  $J_{x,M}:=J_x \ \mathds{1}_{A_{x,M}}, $ and ${\xi}_{x,M}:={\xi}_{x}\ \mathds{1}_{e_x \leqslant M}. $ 

Then, $J_{x,M}$ and $\xi_{x,M}$ are bounded by $C(M)>0$.

We use twice the Cauchy-Schwartz inequality to write \begin{align} \mu_t^N\left[ \sum_{x\in\T_N} v\left(t,\frac{x}{N}\right) \ J_x \ \mathds{1}_{A_{x,M}^c}\right]  & \leqslant \mu_t^N \left[ \left(\sum_{x \in \T_N} v^2\left(t,\frac{x}{N}\right) J_x^2 \right)^{1/2} \ \left(\sum_{x \in \T_N} \mathds{1}_{A_{x,M}^c} \right)^{1/2} \right] \notag \\ 
 & \leqslant \left(\mu_t^N \left[ \sum_{x \in \T_N} v^2\left(t,\frac{x}{N}\right) J_x^2 \right]\right)^{1/2} \ \left(\mu_t^N \left[\sum_{x \in \T_N} \mathds{1}_{A_{x,M}^c} \right]\right)^{1/2}. \end{align}
First, $v^2\left(t,x/N\right)$ is bounded by a constant which does not depend on $N$. Second, the term $J_x^2$ can be bounded above by the squared energy $e_x^2$. The hypothesis \eqref{mom} shows that  there exists $C_0$ which does not depend on $N$ such that \begin{equation} \left(\mu_t^N \left[ \sum_{x \in \T_N} v^2\left(t,\frac{x}{N}\right) J_x^2 \right]\right)^{1/2} \leqslant C_0 \ N^{1/2}\ . \end{equation}
Moreover, Markov inequality proves that \begin{equation} \mu_t^N \left[\sum_{x \in \T_N} \mathds{1}_{A_{x,M}^c} \right] \leqslant \sum_{x \in \T_N} \mu_t^N \left[ \mathds{1}_{e_x > M/2} \right] + \mu_t^N \left[ \mathds{1}_{e_{x-1} > M/2} \right] \leqslant \frac{4}{M} \sum_{x \in \T_N} \mu_t^N[e_x] \leqslant \frac{C_1}{M} \ N\ .\end{equation}
Finally, we obtain a constant $C$ independent { of} $N$ such that \begin{equation} \mu_t^N\left[ \sum_{x\in\T_N} v\left(t,\frac{x}{N}\right) \ J_x \ \mathds{1}_{A_{x,M}^c}\right] \leqslant CN \ \varepsilon(M)\ . \end{equation}
Observe that this estimate is in agreement with the Gronwall inequality we want to prove, since we are going to divide by $N$. Thus, the error term is of order $1/M$ that goes to 0 as $M \to \infty$.

Consequently, $J_x$ can be replaced by $J_{x,M}$ in \eqref{taylor}, and similarly, ${\xi}_x$ can be replaced by ${\xi}_{x,M}$.

\subsection{One-block Estimate}\label{subsec:estimate}

Now we prove that \begin{multline} \frac{1}{N} \mu_t^N \left[\sum_{x \in \T_N} v\left(t,\frac{x}{N}\right) \left[J_{x,M}-H\left({\eta}\left(t,\frac{x}{N}\right)\right)-(DH)\left({\eta}\left(t,\frac{x}{N}\right)\right)\cdot\left({\xi}_{x,M}-{\eta}\left(t,\frac{x}{N}\right)\right)\right]\right] \leqslant \\ \leqslant C \ \frac{H_N(t)}{N} + \varepsilon(N,M) \end{multline} with $\varepsilon(N,M) \xrightarrow[M \to \infty , N\to \infty]{} 0. $

We denote by $\Lambda_{\ell}(y)$ the box of length $\ell$ centered around $y$. We introduce the microscopic average profiles \begin{equation}{\eta}_{\ell,M}(y):=\frac{1}{\ell} \sum_{j \in \Lambda_{\ell}(y)} {\xi}_{j,M}\ .\end{equation}
We split $\T_N$ into $p=N/\ell$ boxes $\Lambda_{\ell}(x_j)$ centered at $x_j$.  Here $\ell$ is assumed to divide $N$ for simplicity. We will first let $N \to \infty$, then $\ell \to \infty$ and then $M \to \infty$.

First of all, we want to replace \begin{equation}\frac{1}{N} \sum_{x \in \T_N} v\left(t,\frac{x}{N}\right) J_{x,M}\end{equation} by \begin{equation}{\frac{1}{p}} \sum_{j=1}^p v\left(t,\frac{x_k}{N}\right)\left[ \frac{1}{\ell} \sum_{i \in \Lambda_{\ell}(x_j)} J_{i,M}\right]\ .\end{equation}

The error term produced during this step can be written as \begin{equation}\vert R_N \vert = \frac{1}{N} \left\vert \sum_{j=1}^p \sum_{i \in \Lambda_{\ell}(x_j)} \left[v\left(t,\frac{i}{N}\right)-v\left(t,\frac{x_j}{N}\right)\right] J_{i,M} \right\vert  \leqslant C_1(M) \frac{\ell}{N} \ .\end{equation}
The last inequality comes from the smoothness of $v$, more precisely \begin{equation} \left\vert v\left(t,\frac{i}{N}\right)-v\left(t,\frac{x_j}{N}\right)\right\vert \leqslant C_0 \frac{\ell}{N}\ .\end{equation}

Similarly, we perform the same estimates for the other terms and it remains to prove that \begin{multline} \mu_t^N \left[\frac{1}{p}\sum_{j=1}^p v\left(t,\frac{x_j}{N}\right) \left\{\frac{1}{\ell} \sum_{i \in \Lambda_{\ell}(x_j)} J_{i,M}-H\left({\eta}\left(t,\frac{x_j}{N}\right)\right)\right.\right.\\ \left.\left. -(DH)\left({\eta}\left(t,\frac{x_j}{N}\right)\right)\cdot\left({\eta}_{\ell,M}(x_j)-{\eta}\left(t,\frac{x_j}{N}\right)\right)\right.\Bigg\}\right]  \label{limit01} \end{multline} vanishes as $M, N, \ell \to \infty$, the limit in $N$ taken first, then the limit in $\ell$ and finally the limit in $M$. The {additive term which appears after performing this replacement} can be bounded above by a term $\varepsilon_{N,M,\ell}$ which depends on $N,M$ and $\ell$, but which is independent { of} the particular splitting of $\T_N$ into $p$ boxes. This term is of order $o(N)$ in the Gronwall inequality we want to prove, in the sense that \begin{equation}\lim_{M \to \infty} \lim_{\ell \to \infty} \lim_{N \to \infty} {N}^{-1} \ \mu_t^N[\varepsilon_{N,M,\ell}]=0\ .\end{equation}
Now we want to perform a one-block estimate. The main idea consists in replacing $ {\ell}^{-1} \sum_{i \in \Lambda_{\ell}(x_j)} J_{i,M} $ by $H({\eta}_{\ell,M}(x_j))$. 
This is achieved thanks to the ergodicity of the dynamics (see Theorem \ref{theo:ergod}). In order to use this ergodicity property, we  have to work with a space translation invariant measure. To obtain such a probability measure, we introduce a second average over the $x_j,\ 1\leqslant j \leqslant p$. { For each $k \in \{0,..., \ell-1\}$, we can split $\T_N$ into $p$ disjoint boxes of length $\ell$ by writing}\begin{equation}\forall \ k=0,...,\ell-1,\ \T_N=\bigcup_{j=1}^p \Lambda_{\ell}(x_j+k)\ .\end{equation}Then, we average the different splittings mentioned above. More precisely, in Appendix \ref{appb} we recall how to prove \begin{equation} \limsup_{M \to \infty} \limsup_{\ell \to \infty} \limsup_{N \to \infty} \frac{1}{\ell} \sum_{k=0}^{\ell-1} \mu_t^N\left[\frac{1}{p \ell}\sum_{j=1}^p \left\vert  v\left(t,\frac{[x_j + k]}{N}\right)\sum_{i \in \Lambda_{\ell}(x_j+k)} J_{i,M}-H({\eta}_{\ell,M}(x_j+k))\right\vert \right]=0. \label{oneb} \end{equation}

\subsection{Large Deviations }\label{subsec:deviations}

The previous estimates are valid for any splitting of $\T_N$ into $p$ boxes of length $\ell$. Thus, it would be sufficient to prove \eqref{limit01} with every $x_i$ replaced by $x_i+k$ for arbitrary $k \in \{1,...,\ell-1\}$. Consequently, it is sufficient to prove \eqref{limit01} in an averaged form. Then, from the one-block estimate, we have to deal with \begin{equation} \frac{1}{\ell} \sum_{k=0}^{\ell-1} \mu_t^N\left[\frac{1}{N}\sum_{j=1}^p v\left(t,\frac{[x_j+k]}{N}\right) \Omega\left({\eta}_{\ell,M}(x_j+k),{\eta}\left(t,\frac{[x_j+k]}{N}\right)\right)\right], \label{limit02} \end{equation} where $\Omega(\mathbf{w},\mathbf{u}):=H(\mathbf{w})-H(\mathbf{u})-DH(\mathbf{u})\cdot(\mathbf{w}-\mathbf{u}).$

By definition of the entropy, for any $\alpha >0$ and any positive measurable function $f$ we have \begin{equation} \int f \ d\mu \leqslant \frac{1}{\alpha} \left\{ \log \left(\int e^{\alpha f} \ d\nu \right) + H(\mu \vert \nu) \right\}\ . \label{entropin}\end{equation}
This inequality, known as the \textit{entropy inequality}, allows to show that: for { any $\alpha >0$}, \eqref{limit02} is less than or equal to \begin{equation}\frac{H_N(t)}{\alpha\ N} + \frac{1}{\ell} \sum_{k=0}^{\ell-1}  \frac{1}{\alpha N}  \log \nu_{\chi_t(\cdot)}^N\left[e^{\alpha \ell \sum_{j=1}^p v\left(t,\frac{[x_j+k]}{N}\right)  \Omega\left({\eta}_{\ell,M}(x_j+k),{\eta}\left(t,\frac{[x_j+k]}{N}\right)\right)} \right]\ .\end{equation} Notice that the last integral converges because all quantities are bounded.

The first term is in agreement with the Gronwall inequality we want to obtain. We look at the second term. Since we have arranged the sum over $p$ disjoint blocks which are independently distributed by $\nu_{\chi_t(\cdot)}^N$, the second term is equal to 
\begin{equation} \frac{1}{\ell} \sum_{k=0}^{\ell-1}  \frac{1}{\alpha N} \sum_{j=1}^p  \log \nu_{\chi_t(\cdot)}^N\left[e^{\alpha \ell v\left(t,\frac{[x_j+k]}{N}\right)  \Omega\left({\eta}_{\ell,M}(x_j+k),{\eta}\left(t,\frac{[x_j+k]}{N}\right)\right)} \right]. \end{equation}
We are going to show that this expression vanishes as $M, N, \ell \to \infty$ by using the large deviation properties of the measure $\nu_{\chi_t(\cdot)}^N$, that locally is almost homogeneous. In fact, by using the smoothness for the various involved functions, we can substitute the inhomogeneous product measure $\nu_{\chi_t(\cdot)}^N$ restricted to $\Lambda_{\ell}(x_j+k)$ with the homogeneous product  measure $\mu_{\chi_t([x_j+k]/N)}^N$, in each expectation of the previous expression. More precisely, we have the following lemma.

\begin{lem}\label{lem:replace}
\begin{equation} M_1(N,\ell,k,M):=\frac{1}{\alpha N} \sum_{j=1}^p \log \nu_{\chi_t(\cdot)}^N\left[e^{\alpha \ell \left\vert v\left(t,\frac{[x_j+k]}{N}\right)  \Omega\left({\eta}_{\ell,M}(x_j+k),{\eta}\left(t,\frac{[x_j+k]}{N}\right)\right)\right\vert} \right]\end{equation} can be replaced by \begin{equation} M_2(N,\ell,k,M):=\frac{1}{\alpha N} \sum_{j=1}^p \log \mu^N_{\chi_t([x_j+k]/N)} \left[e^{\alpha \ell \left\vert v\left(t,\frac{[x_j+k]}{N}\right)  \Omega\left({\eta}_{\ell,M}(x_j+k),{\eta}\left(t,\frac{[x_j+k]}{N}\right)\right)\right\vert} \right].\end{equation}  The {difference between these two terms is less than or equal to a small term which} depends on $\ell$ (but not on $k$) and vanishes in the $N$ limit: there exists a constant $C(\ell,M,N)$ which does not depend on $k$ such that \begin{equation} M_1(N,\ell,k,M)-M_2(N,\ell,k,M)\leqslant C(\ell,M,N) \ \text{ and } \ C(\ell,M,N) \xrightarrow[N \to \infty]{} 0\ .\end{equation}\end{lem}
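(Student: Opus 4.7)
The key observation is that the function $F_\ell^{(j)}(\omega) := \alpha\ell\,|v(t,[x_j+k]/N)\,\Omega(\eta_{\ell,M}(x_j+k),\eta(t,[x_j+k]/N))|$ depends only on $\omega_{\Lambda_j}$, with $\Lambda_j := \Lambda_\ell(x_j+k)$, and is a non-negative bounded function thanks to the cut-off $\xi_{i,M}$ introduced in Section~\ref{subsec:cutoff}. Writing $\bar\chi_j := \chi_t([x_j+k]/N)$, the problem reduces to comparing the $\Lambda_j$-marginals of $\nu^N_{\chi_t(\cdot)}$ and $\mu^N_{\bar\chi_j}$. Since $\mu^N_{\bar\chi_j}$ is a product Gaussian measure, its marginal on $\Lambda_j$ is simply $\mu^{\Lambda_j}_{\bar\chi_j}$. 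My plan is to show that the Radon--Nikodym density $g_{\Lambda_j}$ of the $\Lambda_j$-marginal of $\nu^N_{\chi_t(\cdot)}$ with respect to $\mu^{\Lambda_j}_{\bar\chi_j}$ satisfies $\|g_{\Lambda_j}-1\|_{L^1(\mu^{\Lambda_j}_{\bar\chi_j})}\to 0$ as $N\to\infty$, at a rate uniform in $j$ and $k$, and then to translate this into the claimed bound on the log-expectations.

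To obtain $g_{\Lambda_j}$, I would integrate the explicit density $d\nu^N_{\chi_t(\cdot)}/d\mu^N_{\bar\chi_j}$ over $\omega_{\Lambda_j^c}$ against the Gaussian product $\mu^{\Lambda_j^c}_{\bar\chi_j}$. Decomposing the exponent into pieces supported on $\Lambda_j$, on the boundary between $\Lambda_j$ and $\Lambda_j^c$ (the two terms of $\tau_y h$ that couple an interior variable to an exterior one), and purely on $\Lambda_j^c$, the exterior part produces a Gaussian integral that combines with the partition-function ratio $Z(\bar\chi_j)^N/Z(\chi_t(\cdot))$ (see Appendix~\ref{appa}) to yield a multiplicative factor $1+O(1/N)$ independent of $\omega_{\Lambda_j}$; the two boundary couplings are linear in a single exterior $p$-variable with coefficients depending on $\omega_{\Lambda_j}$, and are integrated in closed form via the Gaussian moment-generating function to contribute an additional factor $\exp(O(1/N))$ that is at most polynomial in the $(e_y,r_y)$ at the interior boundary sites. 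What remains is the interior mismatch
\[
\sum_{y\in\Lambda_j}\!\bigl[(\bar\beta_j-\beta_t(y/N))\,e_y + (\bar\lambda_j-\lambda_t(y/N))\,r_y\bigr] + \frac{1}{N}\!\sum_{y,y+1\in\Lambda_j}\!F(t,y/N)\cdot\tau_y h,
\]
whose coefficients are $O(\ell/N)$ by the $\mathcal{C}^1$-smoothness of $\beta_t,\lambda_t$ (parabolic regularity of the hydrodynamic equations, combined with the bound \eqref{eq:beta}). A direct Gaussian computation then yields $\mu^{\Lambda_j}_{\bar\chi_j}\bigl[(g_{\Lambda_j}-1)^2\bigr] \leq C(T)\ell^3/N^2$, and Cauchy--Schwartz gives $\|g_{\Lambda_j}-1\|_{L^1(\mu^{\Lambda_j}_{\bar\chi_j})} \leq C(T)\ell^{3/2}/N$.

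Since $F_\ell^{(j)} \geq 0$, both exponential expectations are at least $1$, and the elementary bound $\log(a/b) \leq |a-b|$ for $a,b\geq 1$ gives
\[
\log \nu^N_{\chi_t(\cdot)}\bigl[e^{F_\ell^{(j)}}\bigr] - \log \mu^N_{\bar\chi_j}\bigl[e^{F_\ell^{(j)}}\bigr] \leq \bigl\|e^{F_\ell^{(j)}}\bigr\|_\infty \cdot \|g_{\Lambda_j}-1\|_{L^1(\mu^{\Lambda_j}_{\bar\chi_j})} \leq e^{c(M)\alpha\ell}\,\frac{C(T)\ell^{3/2}}{N},
\]
uniformly in $j$ and $k$. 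Summing over the $p = N/\ell$ blocks and dividing by $\alpha N$ produces $M_1 - M_2 \leq e^{c(M)\alpha\ell}\,C(T)\ell^{1/2}/(\alpha N) =: C(\ell,M,N)$, which does not depend on $k$ and vanishes as $N\to\infty$ for any fixed $\ell$ and $M$. The main technical obstacle will be the careful bookkeeping of the boundary couplings of $\tau_y h$ across $\partial\Lambda_j$ and the consistent cancellation of the exterior Gaussian integral against the partition-function estimate of Appendix~\ref{appa}: the explicit Gaussian structure of $\mu^N_{\bar\chi_j}$ is essential here, since only then do the boundary integrals and the partition-function ratio evaluate in closed form; the interior smoothness-based Taylor expansion is then standard.
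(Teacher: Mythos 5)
Your proposal is correct and follows essentially the same route as the paper: compare the two expectations block by block through the Radon--Nikodym density of the block marginal, use that both expectations are $\geqslant 1$ together with $\log(1+x)\leqslant \vert x\vert$ and the uniform bound $e^{C(M)\alpha\ell}$ on the integrand, and control the density mismatch by the $\mathcal{C}^1$-smoothness of $\chi_t$, which makes the exponent an $O(\ell/N)$ perturbation summed over $\ell$ sites. The only differences are in bookkeeping -- you marginalize the exterior and boundary couplings of $\tau_y h$ explicitly and quantify $\Vert g_{\Lambda_j}-1\Vert_{L^1}$ via an $L^2$ Gaussian computation, whereas the paper bounds $\vert F_j(h_j-1)\vert$ pointwise and changes the reference measure to a homogeneous $\mu^N_{\bar\beta,0}$ -- and both yield the same $k$-independent, $N\to\infty$-vanishing error.
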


{\textbf{Remark.} In the following, we will prove that }\begin{equation}\limsup_{M \to \infty} \limsup_{\ell \to \infty} \limsup_{N \to \infty} M_2(N,\ell,k,M)=0\ .\end{equation} {In addition to this lemma, this implies that} \begin{equation}\limsup_{M \to \infty} \limsup_{\ell \to \infty} \limsup_{N \to \infty} M_1(N,\ell,k,M)=0\ ,\end{equation} {since $M_1(N,\ell,k,M)$ is always nonnegative, and we know that, for all sequences $a_n$ and $b_n$,} \begin{equation}\limsup a_n \leqslant \limsup (a_n-b_n) + \limsup b_n \ .\end{equation}

\begin{proof} For each $j \in \{1,...,p\}$, the function \begin{equation}F_j:= \exp\left\{ \alpha \ell \left\vert v\left(t,\frac{[x_j+k]}{N}\right) \ \Omega\left({\eta}_{\ell,M}(x_j+k),{\eta}\left(t,\frac{[x_j+k]}{N}\right)\right)\right\vert \right\}\end{equation} is bounded above by $e^{C \ell}, \ C>0$ (since $\eta_{\ell,M}$ is bounded and $t$ belongs to a compact set), and depends on the configuration only through the coordinates in $\Lambda_\ell(x_j+k)$. Thus, each expectation appearing in the sum can be taken w.r.t  the restriction to $\Lambda_\ell(x_j+k)$ of $\nu_{\chi_t(\cdot)}^N$. These restrictions are inhomogeneous product measures but with { slowly} varying parameters and hence, each term $\log \nu_{\chi_t(\cdot)}^N[F_j]$ can be replaced by $\log \mu^N_{\chi_t([x_j+k]/N)} [F_j]$ with a small { error}.

Indeed, the difference between these two terms is equal to \begin{equation}\log \mu^N_{\chi_t([x_j+k]/N)} \left[ 1+ \frac{F_j \ (h_j-1)}{\mu^N_{\chi_t([x_j+k]/N)} [F_j]} \right]=\log\left(1+ \frac{\mu^N_{\chi_t([x_j+k]/N)}[F_j \ (h_j-1)]}{\mu^N_{\chi_t([x_j+k]/N)} [F_j]}\right)\end{equation}
with \begin{multline} h_j:= \exp \Bigg\{\sum_{i \in \Lambda_\ell(x_j+k)} {\xi}_{i,M} \cdot \left[ \chi_t\left(\frac{i}{N}\right) - \chi_t\left(\frac{x_j+k}{N}\right)\right] + \\ { - } \frac{1}{N} F\left(t,\frac{i}{N}\right) \cdot \tau_ih \ {  + }\left[\log Z\left(\chi_t\left(\frac{i}{N}\right)\right)-\log Z\left(\chi_t\left(\frac{x_j+k}{N}\right)\right)\right]\Bigg\}\ .\end{multline}
The inequality {$\log(1+x) \leqslant \vert x \vert$ (true for any real $x$)} and the fact that $\mu^N_{\chi_t([x_j+k]/N)} [F_j] \geqslant 1$ reduces us to estimate \begin{equation}\mu^N_{\chi_t([x_j+k]/N)}[\vert F_j (h_j-1)\vert]\ .\end{equation}
By using the smoothness of $\chi_t$ and the inequality $\vert e^x-1 \vert \leqslant \vert x \vert e^{\vert x \vert}$, one easily shows that there exist positive constants $C_0$, $C(\ell)$, and $\bar{\beta}$ which do not depend on $j$ such that \begin{equation*}\vert F_j (h_j-1)\vert \leqslant \frac{C( \ell) \ell}{N} \left(\sum_{ i \in \Lambda_\ell(x_j+k)} \left[e_{i,M} + e_{i+1,M} +1 \right] \right) \exp \left\{ \frac{C_0 \ell}{N} \sum_{ i \in \Lambda_\ell(x_j+k)} \left[e_{i,M} + e_{i+1,M} +1 \right] \right\}\ ,\end{equation*}  \begin{equation} \text{and } \ \ \left. \frac{d\mu^N_{\chi_t([x_j+k]/N)}}{d\mu_{\bar{\beta},0}^N} \right\vert_{\Lambda_\ell(x_j+k)} \leqslant C(\ell)\ . \end{equation}
Hence, the total error performing by these replacements is bounded above: $$ M_1(N,\ell,k,M)-M_2(N,\ell,k,M) \leqslant \frac{1}{\alpha N} C_1(\ell,M) \ \mu^N_{\bar{\beta},0}\left[\exp \left\{ \frac{C_0}{p} \sum_{i \in \Lambda_\ell(0)} [e_{i,M} + e_{i+1,M} +1] \right\}\right]$$ for some positive constant $C_1(\ell,M)$.

It trivially goes to 0 as $N$ goes to infinity for each given fixed $\ell$. \end{proof}

Lastly, we have to show that the limit \begin{equation}\limsup_{M \to \infty} \limsup_{\ell \to \infty} \limsup_{ N \to \infty} \frac{1}{\ell} \sum_{k=0}^{\ell-1}  \frac{1}{\alpha N} \sum_{j=1}^p  \log \mu^N_{\chi_t([x_j+k]/N)}\left[e^{\alpha \ell v\left(t,\frac{[x_j+k]}{N}\right) \ \Omega\left({\eta}_{\ell,M},{\eta}\left(t,\frac{[x_j+k]}{N}\right)\right)} \right]\end{equation}
vanishes. Here, $ {\eta}_{\ell,M}:=\eta_{\ell,M}(0)={\ell}^{-1} \sum_{i \in \Lambda_{\ell}(0)} { \xi_{i,M}}$\ .

The limit in $p$ results in an integral over $\T$ because we have a Riemann sum. Moreover, the integral does not depend on $k$ so that the averaging over $k$ disappears in the $p$ limit. Hence, the point is to estimate 
\begin{equation} \limsup_{M \to \infty} \limsup_{\ell \to \infty} \frac{1}{\alpha \ell} \int_{\T} { \log } \ \mu^N_{\chi_t({ q})} \left[e^{ \alpha \ell v\left(t,{ q}\right) \ \Omega\left({\eta}_{\ell,M},{\eta}\left(t,{ q}\right)\right)}\right] \ d{ q}\ .\end{equation}
According to Laplace-Varadhan theorem applied to these product measures $\mu_{\chi_t({ q})}^N$, and { according to the dominated convergence theorem}, the previous limit is equal to \begin{equation} \limsup_{M \to \infty} \frac{1}{\alpha} \int_{\T} \sup_{\mathbf{z} \in \R^2} \left\{\alpha v(t,{ q}) \ \Omega(\mathbf{z},{\eta}(t,{ q})) - I_M(\mathbf{z}, {\eta}(t,{ q}))\right\} d{ q}\ , \label{limsup} \end{equation}
where $I_M(\mathbf{z},{\eta}(t,{ q}))$ is the rate function of the sequence $\left\{ k^{-1} \sum_{i=1}^k \xi_{i,M}\right\}_k$ as $(r_x,p_x)_{x \in \T_N}$ are distributed according to the homogeneous product measure $\mu_{\chi_t({ q})}^N$\ . 

The function $I_M$ is the Legendre transform of the cumulant-generating function of {${\xi}_{0,M}$}\ : 
\begin{equation} I_M(\mathbf{z},{\eta}(t,{ q}))=\sup_{\mathbf{y} \in \R^{2}} \left\{ \mathbf{y} \cdot \mathbf{z} - \log \mu^N_{\chi_t({ q})} [e^{\mathbf{y} \cdot {{\xi}_{0,M}}}]\right\}\ .\end{equation}
Hence \begin{equation}\liminf_{M \to \infty} I_M(\mathbf{z},{\eta}(t,{ q})) \geqslant \sup_{ \mathbf{y} \in \R^2} \left\{ \mathbf{y} \cdot \mathbf{z} - \log \mu^N_{\chi_t({ q})} [e^{\mathbf{y} \cdot {{\xi}_0}}]\right\}=I(\mathbf{z},{\eta}(t,{ q}))\ ,\end{equation}
where $I(\mathbf{z},{\eta}(t,{ q}))$ is the rate function of  $ \left\{ k^{-1} \sum_{i=1}^k \xi_{i}\right\}_k$ as $(r_y,p_y)_{y}$ are distributed according to the homogeneous product measure $\mu^N_{\chi_t({ q})}$\ . 

It follows, by Fatou's lemma, that \eqref{limsup} is smaller than or equal to \begin{equation} \frac{1}{\alpha} \int_{\T} \sup_{\mathbf{z}} \left\{ \alpha v(t,{ q}) \ \Omega(\mathbf{z},{\eta}(t,{ q})) - I(\mathbf{z},{\eta}(t,{ q}))\right\} d{ q}\ . \end{equation}
From now on we omit the dependance in $(t,{ q})$ of the involved functions $v$ and ${\eta}$. Recall that $\chi$ and ${\eta}$ are in duality (see \eqref{fenchel}). An easy computation gives that \begin{align} I(\mathbf{z},{\eta})& =\sup_{\mathbf{y}} \left\{ \mathbf{y}\cdot \mathbf{z} - \log \left( \int_{{\R^{2}}} e^{\mathbf{y}\cdot \xi} e^{\chi \cdot \xi - \log Z(\chi)} d\r d\p\right)\right\} \notag\\ 
& = \sup_{\mathbf{y}} \{ \mathbf{y} \cdot \mathbf{z} - \log Z(\chi+\mathbf{y})+\log Z(\chi) \} \notag\\
& = \log Z(\chi)\ { + } \ \mathbf{z} \cdot \chi - S(\mathbf{z})\ ,\end{align}
where the last equality follows from the equality between the Fenchel-Legendre transform of $\log Z$ and the function $-S$. We observe that $I({\eta},{\eta})=0$ and  $D_{\mathbf{z}} { I} (\mathbf{z},{\eta})=0$. Furthermore, $I$ is strictly convex in $\mathbf{z}$: \begin{equation}(D^2_{\mathbf{z}}I )(\mathbf{z},{\eta}) = ({D_{\mathbf{z}}^2}\{-S\})(\mathbf{z}) >0\ .\end{equation}
Since $\Omega({\eta},{\eta})=0$ and $(D_{\mathbf{z}}\Omega)(\mathbf{z},{\eta})=(DH)(\mathbf{z})-DH({\eta})$, we also get: $(D_{\mathbf{z}}\Omega)({\eta},{\eta})=0\ .$

\begin{lem} \label{lem:compact}
For $\alpha >0$ sufficiently small, \begin{equation}\forall \ \mathbf{z} \in {\R^{2}},\ \forall \ { q} \in \T, \  \alpha v(t,{ q})  \Omega(\mathbf{z}, {\eta}(t,{ q})) \leqslant I(\mathbf{z},{\eta}(t,{ q}))\ .\end{equation}
\end{lem}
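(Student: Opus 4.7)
The idea is to prove $\alpha v \Omega \leq I$ by a Hessian comparison near $\mathbf{z}=\eta(t,q)$, supplemented by a growth argument for $\mathbf{z}$ far from $\eta$, with all constants made uniform via the compactness of $(t,q) \in [0,T]\times \T$.

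\textbf{Uniform setting.} Parabolic regularity of \eqref{hydro} and the lower bound $\beta_t(q) \geq c > 0$ (Remark 2 following Theorem \ref{theo:hydro}) imply that the map $(t,q) \mapsto \eta(t,q)$ has image in a compact set $K \subset \{(\mathbf{e},\mathbf{r}) \in \R^2 : \mathbf{e} > \mathbf{r}^2/2\}$ that remains a positive distance away from the singular line $\mathbf{e} = \mathbf{r}^2/2$. On any compact neighborhood of $K$, the quantities $v(t,q)$, $D^2 H$, and $-D^2 S$ satisfy uniform bounds; in particular, $-D^2 S(\mathbf{z}) \succeq c_S \mathrm{Id}$ for some $c_S > 0$.

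\textbf{Local argument.} Using the vanishing relations recalled in the text, $\Omega(\eta,\eta)=D_{\mathbf{z}}\Omega(\eta,\eta)=0$ with $D^2_{\mathbf{z}}\Omega(\cdot,\eta)=D^2 H$, and $I(\eta,\eta)=D_{\mathbf{z}} I(\eta,\eta)=0$ with $D^2_{\mathbf{z}} I(\cdot,\eta) = -D^2 S$, Taylor's theorem gives
\begin{equation*}
\Omega(\mathbf{z},\eta) = \tfrac12 (\mathbf{z}-\eta)^T D^2 H(\tilde\mathbf{z})(\mathbf{z}-\eta),\qquad I(\mathbf{z},\eta) = \tfrac12 (\mathbf{z}-\eta)^T \bigl(-D^2 S(\bar\mathbf{z})\bigr)(\mathbf{z}-\eta),
\end{equation*}
for some $\tilde\mathbf{z}, \bar\mathbf{z}$ on the segment $[\eta,\mathbf{z}]$. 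Hence, for $\mathbf{z}$ in a uniformly-sized neighborhood $V$ of $K$ on which $\|D^2 H\|\leq C_H$, any $\alpha$ with $\alpha \|v\|_\infty C_H \leq c_S$ yields $\alpha v(t,q) \Omega(\mathbf{z},\eta(t,q)) \leq I(\mathbf{z},\eta(t,q))$ for all $(t,q)\in[0,T]\times\T$ and $\mathbf{z}\in V$.

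\textbf{Global argument and main obstacle.} For $\mathbf{z}\notin V$ one wants to exploit the blow-up of $I(\cdot,\eta)$ at the boundary $\mathbf{e}_z=\mathbf{r}_z^2/2$ (through the $-\log(\mathbf{e}_z-\mathbf{r}_z^2/2)$ term) and at infinity through the linear $\chi\cdot\mathbf{z}$ with $\beta\geq c>0$ and through $\beta \mathbf{r}_z^2/2$, in order to dominate the polynomial growth of $\Omega$. The genuinely hard case is $k=3$, where $H_3(\mathbf{e},\mathbf{r})=(2\mathbf{e}-\mathbf{r}^2)(\mathbf{e}+3\mathbf{r}^2/2)$ is quadratic in $\mathbf{e}_z$ while $I$ is only linear in that direction, so a clean pointwise bound for all $\mathbf{z}\in\R^2$ is subtle. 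To make the global step work one appeals to the $M$-cutoff introduced in Section \ref{subsec:cutoff}: since $\eta_{\ell,M}$ is an average of $\xi_{i,M}$ with $e_i\leq M$, the effective range of $\mathbf{z}$ in the sup of \S\ref{subsec:deviations} is a compact set of diameter $O(M)$, on which the local Hessian argument extends with a constant that is absorbed at the appropriate step of the interleaved limits $N\to\infty$, $\ell\to\infty$, $M\to\infty$. The smallness of the prefactors $v_3,v_4$, which are derivative-squared quantities coming from the smoothness of the profiles, is what ultimately makes the $\alpha$ of the lemma truly universal.
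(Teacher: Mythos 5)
Your \textbf{local argument} is essentially the paper's argument, but the \textbf{global argument} is where the lemma actually has to be closed, and you have not closed it. The paper does not rely on an abstract Hessian comparison plus a separate far-field step: it computes the rate function explicitly,
\begin{equation*}
I(\mathbf{z},\eta)= \frac{1}{e-r^2/2}\left(\frac{r^2}{2}-z_2 r+z_1\right)-\log\left(\frac{z_1-z_2^2/2}{e-r^2/2}\right) -1\ ,
\end{equation*}
applies the elementary inequality $-\log x \geqslant 1-x$ to obtain the \emph{global} lower bound $I(\mathbf{z},\eta)\geqslant (z_2-r)^2/\left(2(e-r^2/2)\right)$, upgrades this to $I(\mathbf{z},\eta)\geqslant c_{\eta}\vert\mathbf{z}-\eta\vert^2$ with $c_{\eta}\geqslant c>0$ uniformly in $(t,q)$ thanks to \eqref{eq:beta}, and matches it against the Taylor--Lagrange bound $\Omega(\mathbf{z},\eta)\leqslant C\vert\mathbf{z}-\eta\vert^2$; the uniform boundedness of $v$ then fixes a single $\alpha$ valid for all $\mathbf{z}\in\R^2$ and all $(t,q)\in[0,T]\times\T$. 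Your neighborhood-$V$ computation reproduces only the first half of this.

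Your proposed repair of the global step has a genuine gap. First, Lemma \ref{lem:compact} is invoked \emph{after} the Fatou step of Section \ref{subsec:deviations}, where the supremum runs over all of $\R^2$ and the rate function is $I$, not $I_M$; the $M$-cutoff is no longer present there, so the claim that ``the effective range of $\mathbf{z}$ is a compact set of diameter $O(M)$'' does not apply at the point where the lemma is used. Second, even if the argument were restructured to keep $I_M$ and a compact range of $\mathbf{z}$, the resulting $\alpha$ would depend on $M$. But $\alpha$ enters the entropy inequality \eqref{entropin} through the term $H_N(t)/(\alpha N)$, i.e.\ as the constant in the Gronwall inequality \eqref{gron}, and that constant must be independent of $M$, $\ell$ and $N$ because $M\to\infty$ is taken only after Gronwall has been applied; an $M$-dependent $\alpha$ cannot simply be ``absorbed at the appropriate step of the interleaved limits.'' Your observation that $H_3$ grows faster than quadratically in $\vert\mathbf{z}-\eta\vert$ while $I$ grows only linearly in the energy direction is a fair criticism of the uniformity of the constant in the paper's own Taylor--Lagrange step, but identifying the obstacle is not the same as resolving it: as written, your proof establishes the claimed inequality only for $\mathbf{z}$ in a neighborhood of the range of $\eta$, not for all $\mathbf{z}\in\R^2$ as the lemma requires.
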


\begin{proof}  { An easy computation provides an explicit expression for the rate function: if $\mathbf{z}=(z_1,z_2)$ and $\eta=(e,r)$ with $ e-r^2/2 >0$ then } \begin{equation} I(\mathbf{z},\eta)= \frac{1}{e-r^2/2}\left(\frac{r^2}{2}-z_2 r+z_1\right)-\log\left(\frac{z_1-z_2^2/2}{e-r^2/2}\right) -1\ .\end{equation}{From the inequality $-\log x \geqslant -x+1$ (satisfied for any $x>0$), we get } \begin{equation}I(\mathbf{z},\eta) \geqslant \frac{1}{2(e-r^2/2)}(r-z_2)^2\ .\end{equation}{Thus, for a given ${\eta}$, the rate function $\mathbf{z} \to I(\mathbf{z},{\eta})$ is such that $I(\mathbf{z},{\eta}) \geqslant c_{{\eta}} \vert \mathbf{z}-{\eta}\vert^2,$ where $c_{{\eta}}$ is a positive constant. Moreover, according to \eqref{eq:beta},} \begin{equation}  \forall \ t \in [0,T],\ \forall \ q \in \T,\ c_{\eta(t,q)} \geqslant c > 0\end{equation}{Let us fix $\mathbf{z} \in \R^2$.} From the Taylor-Lagrange theorem, there exists a positive constant $C$ such that \begin{equation}\Omega(\mathbf{z},{\eta}(t,{ q})) \leqslant C \vert \mathbf{z}-{\eta}(t,{ q})\vert^2 \leqslant I(\mathbf{z},{\eta}(t,{ q}))\ .\end{equation} {More precisely, $C$ is equal to} \begin{equation}\sup_{(t,q)\in [0,T]\times \T} \Vert D^2H(\eta(t,q))\Vert^2\ .\end{equation}
Since $v$ is uniformly bounded, the result is proved. \end{proof}

Consequently, for $\alpha$ small enough, \begin{equation}\sup_{\mathbf{z}} \left\{ \alpha v(t,{ q}) \ \Omega(\mathbf{z},{\eta}(t,{ q}))-I(\mathbf{z},{\eta}(t,{ q}))\right\}=0\ ,\end{equation} and we have finally proved that \begin{equation}\d_t H_N(t) \leqslant C \ H_N(t) + R_{N,\ell,M}(t)\end{equation} with \begin{equation}\lim_{M \to \infty} \lim_{\ell \to \infty} \lim_{N\to \infty} \frac{1}{N} \int_0^t R_{N,\ell,M}(s) \ ds =0\ .\end{equation}

By Gronwall's inequality we obtain: $H_N(t)/N \xrightarrow[N \to \infty]{} 0$ and Theorem \ref{theo:hydro} is proved.

\section{Proof of Theorem \ref{theo:moments}: Moments Bounds}\label{sec:bounds}

In the following, we prove the two conditions on the moments bounds  for a class of local equilibrium states. First, we assume that the initial law $\mu_0^N$ is exactly the Gibbs local equilibrium measure $\mu_{\beta_0(\cdot),\lambda_0(\cdot)}^N$. Second, we extend the proof to the case where $\mu_0^N$ is a convex combination of Gibbs local equilibrium measures.

We need to control the moments $  \mu_t^N \left[\sum_{x } e_x^{k} \right]$ for all $k \geqslant 1$. The { first two} bounds ($k=1,2)$ would be sufficient to justify the cut-off of the currents, but here we need more  because of Lemma \ref{lem:sym} (which is necessary to prove Proposition \ref{prop}). {Since the chain is harmonic, Gibbs states are Gaussian. We recall that all Gaussian moments can be expressed in terms of variances and covariances}. In the following,  we first give an other representation of the dynamics of the process, and then we prove the bounds and { precise their dependence on $k$}.

Let us highlight that, from now on, we consider the process with generator $\L_N$: it is not accelerated any more. The law of this new process $(\tilde{\omega}_t)_{t \geqslant 0}$ is denoted by $\tilde{\mu}_t^N$. At the end of this part, Theorem \ref{theo:moments} will be easily deduced since all estimates will not depend on $t$, and the following equality still holds:\begin{equation}\mu_t^N=\tilde{\mu}_{tN^2}^N\ .\end{equation}

\textbf{Remarks.} \begin{enumerate} \item In the following, we always respect the decomposition of the space $\Omega^N=\R^N \times \R^N$. Let us recall that the first $N$ components stand for $\r$ and the last $N$ components stand for $\p$. All vectors and matrices are written according to this decomposition. Let $\nu$ be a measure on $\Omega^N$. We denote by $m \in \R^{2N}$ its mean vector and by $C \in \mathfrak{M}_{2N}(\R)$ its { correlation} matrix (see \eqref{cor}). We can write $m$ and $C$ as 
 \begin{equation}m=(\rho,\pi) \in \R^{2N} \ \text{ and } \ C=\begin{pmatrix} U & Z^{\ast} \\ Z & V \end{pmatrix} \in \mathfrak{S}_{2N}(\R)\ ,\end{equation} where $\rho:=\nu[\r] \in \R^N $ , $\pi :=\nu[\p] \in \R^{N}$ and $U, V, Z \in \mathfrak{M}_{N}(\R)$\ .
 
 \item Thanks to the convexity inequality $(a+b)^k \leqslant 2^{k-1} \ (a^k+b^k)$, for $a,b>0$, we can write \begin{equation}e_x^k \leqslant \frac{1}{2} \ \left(p_x^{2k} + r_x^{2k} \right)\ .\end{equation} Thus, instead of proving \eqref{mom} we will show \begin{equation}\mu_t^N \left[\sum_{x \in \T_N} p_x^{2k} \right] \leqslant (C k)^k \ N \quad \text{and} \quad \mu_t^N \left[\sum_{x \in \T_N} r_x^{2k} \right] \leqslant (Ck)^k \ N\ .\end{equation}
 
 \end{enumerate}

\subsection{Poisson Process and Gaussian Measures}\label{subsec:process}

We are going to use a graphical representation of the process $(\tilde{\omega}_t)_{t \geqslant 0}$.

Let us define \begin{equation} A:=\begin{pmatrix} 
0 & \cdots  & \cdots & 0 & -1 & 1 &  & (0) \\
\vdots & & & \vdots & 0 & \ddots & \ddots &   \\
\vdots &  & & \vdots &  0 & & \ddots  & 1 \\
0 & \cdots  & \cdots & 0 & 1 & 0 & 0  & -1 \\
1 & 0  & 0 & -1 & 0 & \cdots & \cdots  & 0\\
-1 & \ddots  & & 0 & \vdots & & &  \vdots  \\
  & \ddots  & \ddots & 0 & \vdots & & &  \vdots \\
 (0) &   & -1 & 1 & 0 & \cdots & \cdots &  0 \end{pmatrix} \in \mathfrak{M}_{2N}(\R)\ . \label{matrix} \end{equation}
 
We now consider $(m_t,C_t)_{t \geqslant 0}$, a Markov process on $\R^{2N} \times \mathfrak{S}_{2N}(\R)$ whose generator is denoted by $\mathcal{G}$ and defined as follows.

Take $m:=(\rho,\pi) \in \R^{2N}$ and $C:=\begin{pmatrix} U & Z^\ast \\ Z & V \end{pmatrix} \in \mathfrak{S}_{2N}(\R)$, where $\rho, \pi$ are two vectors in $ \R^N$, $U, V$ are two symmetric matrices in $\mathfrak{S}_N(\R)$ and  $Z$ is a matrix in $\mathfrak{M}_N(\R)$. Hereafter, we denote by $Z^\ast$ the transpose of the matrix $Z$.

The generator $\mathcal{G}_N$ is given by \begin{equation}(\mathcal{G}_Nv)(m,C):=(\mathcal{K}_Nv)(m,C)+ \gamma \ (\mathcal{H}_Nv)(m,C)\ , \label{markov} \end{equation}
where \begin{equation}\mathcal{K}_N:=\sum_{i,j\in \T_N} (-AC+CA)_{i,j} \  \d_{C_{i,j}} + \sum_{i\in \T_N} { \left\{ (\pi_{i+1}-\pi_i) \d_{\rho_i} + (\rho_i-\rho_{i-1}) \d_{\pi_i}\right\}}\ ,\end{equation}
and \begin{equation}(\mathcal{H}_Nv)(m,C):=\frac{1}{2}\sum_{k\in\T_N} [v(m^k,C^k)-v(m,C)]\ .\end{equation} Here, \begin{equation}m^k=(\rho,\pi^k) \ \text{ and } \ C^k=\Sigma_k^{\ast} \cdot C \cdot \Sigma_k = \begin{pmatrix} U & Z^{k \ast} \\ Z^k & V^k \end{pmatrix}\ .\end{equation}
{In these last two formulas,  $\pi^k$ is the vector obtained from $\pi$ by the flip of $\pi_k$ into $-\pi_k$, and $\Sigma_k$ is defined as }\begin{equation}\Sigma_k=\begin{pmatrix} I_n & 0_n \\ 0_n & I_n-2E_{k,k} \end{pmatrix}.\end{equation}
More precisely,
\begin{equation}Z_{i,j}^k=(-1)^{\delta_{k,i}} Z_{i,j} \ \text{ and } \ V_{i,j}^k=(-1)^{(\delta_{k,i}+\delta_{k,j})}V_{i,j}\ .\end{equation} 

We denote by $\P_{m_0,C_0}$ the law of the process $(m_t,C_t)_{t \geqslant 0}$ starting from $(m_0,C_0)$, and by $\mathbb{E}_{m_0,C_0}[\cdot]$ the expectation with respect to $\P_{m_0,C_0}$. 

For $t \geqslant 0$ fixed, let ${\theta}^t_{m_0,C_0}(\cdot,\cdot)$ be  the law of the random variable $(m_t,C_t)\in \R^{2N}\times \mathfrak{S}_{2N}(\R)$, knowing that the process starts from $(m_0,C_0)$.

Recall that we denote by $G_{m,C}(\cdot)$ the Gaussian measure on $\Omega^N$ with mean $m \in \R^{2N}$ and { correlation} matrix $C\in \mathfrak{S}_{2N}(\R)$.

\begin{lem}\label{lem:pp} Let $\mu_0^N:=\mu_{\beta_0(\cdot),\lambda_0(\cdot)}^N$ be the Gibbs equilibrium state defined by \eqref{gibbs}, where $\lambda_0(\cdot)$ and $\beta_0(\cdot)$ are the two macroscopic potential profiles.

Then, \begin{equation} \tilde{\mu}_t^N = \int G_{m,C}(\cdot) \ d\theta_{m_0,C_0}^t(m,C) \label{process} \end{equation}

where  \begin{equation} m_0:=\left(-\frac{\lambda_0}{\beta_0}\left(\frac{0}{N}\right),\cdots,-\frac{\lambda_0}{\beta_0}\left(\frac{N-1}{N}\right), \underbrace{0, \cdots, 0}_N\right) \end{equation} and
 \begin{equation} C_0:=\begin{pmatrix} D & 0_N \\ 0_N & D' \end{pmatrix}\qquad \text{ with } \left\{ \begin{aligned} D & =\text{\emph{diag}}\left(\cdots, \frac{1}{\beta_0(x/N)} + \frac{\lambda_0^2(x/N)}{\beta_0^2(x/N)}, \cdots \right), \\
D' & =\text{\emph{diag}}\left(\cdots, \frac{1}{\beta_0(x/N)}, \cdots \right).\end{aligned} \right.  \end{equation}

\end{lem}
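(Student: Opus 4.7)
The strategy rests on the linearity of both components of the microscopic dynamics: I construct $(m_t,C_t)$ pathwise on the same Poisson clocks that drive the velocity flips, and verify that the conditional law of $\tilde\omega_t$ given the entire history of those clocks is the Gaussian measure $G_{m_t,C_t}$; integrating out the clocks then yields \eqref{process}. The initialization step is exactly Lemma \ref{lem:matrix}, which identifies $\tilde\mu_0^N = \mu_{\beta_0(\cdot),\lambda_0(\cdot)}^N$ with $G_{m_0,C_0}$.

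Two linear-algebraic facts do all the work. First, the deterministic Hamiltonian part is the linear ODE $\dot\omega = A\omega$ with $A$ the antisymmetric matrix of \eqref{matrix}; its flow $\omega \mapsto e^{sA}\omega$ pushes $G_{m,C}$ forward to $G_{e^{sA}m,\, e^{sA}Ce^{sA^T}}$. Differentiating at $s = 0$ gives $\dot m = Am$ and $\dot C = AC + CA^T$, which is exactly the drift encoded by $\mathcal K_N$ on the chart $(m,C)$ of $\R^{2N}\times\mathfrak{S}_{2N}(\R)$. Second, each flip map $\omega \mapsto \Sigma_k\omega$ is linear with unit Jacobian and sends $G_{m,C}$ to $G_{\Sigma_k m,\,\Sigma_k C \Sigma_k^T} = G_{m^k,C^k}$. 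Since each site carries an independent Poisson clock of rate $1/2$, this reproduces exactly the jump operator $\mathcal H_N$.

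Combining these two ingredients inductively across the (almost surely finitely many) jumps in $[0,t]$, one shows by the tower property that whenever the conditional law at some time $\tau$ is $G_{m_\tau,C_\tau}$, it remains so after any deterministic flow interval and after any single jump. Hence $(m_t,C_t)$ is Markovian with generator $\mathcal G_N = \mathcal K_N + \gamma\mathcal H_N$ and initial condition $(m_0,C_0)$, so its time-$t$ law is $\theta^t_{m_0,C_0}$. For any bounded measurable $\varphi$ on $\Omega^N$,
\[
\int \varphi \, d\tilde\mu_t^N \;=\; \mathbb E\!\left[\mathbb E\bigl[\varphi(\tilde\omega_t)\,\big|\,(m_s,C_s)_{s\leqslant t}\bigr]\right] \;=\; \mathbb E_{m_0,C_0}\!\left[\int \varphi \, dG_{m_t,C_t}\right] \;=\; \int\!\left[\int \varphi \, dG_{m,C}\right] d\theta^t_{m_0,C_0}(m,C),
\]
which is \eqref{process}. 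The main technical obstacle is the bookkeeping for the drift on $C$: one must carefully reconcile $\dot C = AC + CA^T$ with the coefficients written in $\mathcal K_N$ using $A^T = -A$, and choose coordinates on $\mathfrak S_{2N}(\R)$ that respect the symmetry of $C$. Once this is set up cleanly, the rest of the argument reduces to the two push-forward identities above and a routine induction over jump times.
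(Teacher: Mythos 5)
Your proposal is correct and follows essentially the same route as the paper: the Harris graphical construction on the Poisson flip clocks, the observation that both the Hamiltonian flow $e^{sA}$ and the flip maps $\Sigma_k$ are linear and hence preserve Gaussianity while conjugating the mean and correlation matrix, identification of the resulting generator of $(m_t,C_t)$ with $\mathcal{G}_N$, and integration over the clock histories. Your side remark about reconciling $\dot C=AC+CA^{T}=AC-CA$ with the coefficients of $\mathcal{K}_N$ is well taken — the paper's explicit conjugation formula for $C_t^{\xi}$ agrees with your sign, not with the $(-AC+CA)$ written in $\mathcal{K}_N$.
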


\begin{proof} { We begin with the graphical representation of the process $(\tilde{\omega}_t)_{t \geqslant 0}$, which is based on the Harris description. Let $(N_i)_{i\in\T_N}$ be a sequence of independent standard Poisson processes of intensity $\gamma$.} In other words, we put on each site $i \in \T_N$ an exponential clock of mean $1/\gamma$. At time 0 the process has an initial state $\omega_0$. Let $T_1=\text{inf}_{t \geqslant 0} \left\{\exists \ i \in \T_N,\ N_i(t)=1 \right\}$ and $i_1$ the site where the infimum is achieved.

During the interval $[0,T_1)$, the process follows the deterministic evolution given by the generator $\A_N$. More precisely, let $F: (\r,\p) \in \T_N^2 \to A \cdot (\r, \p) \in \T_N^2$ where $A$ is given by \eqref{matrix}. Then, for any continuously differentiable function $f: \Omega^N \to \R$, \begin{equation}\A_N f(\omega)= A \cdot D f (\omega)\ ,\end{equation} and during the time interval $[0,T_1)$, $\tilde{\omega}_t$ follows the evolution given by the system: $dy / dt=F(y).$
At time $T_1$, the momentum $p_{i_1}$ is flipped, and gives a new configuration. Then, the system starts again with the deterministic evolution up to the time of the next flip, and so on. Let $\xi:=(i_1, T_1), \dots, (i_k,T_k), \dots$ be the sequence of sites and ordered times for which we have a flip, and let us denote its law by $\mathbb{P}$. Conditionally to $\xi$, the evolution is deterministic, and the state of the process $\tilde{\omega}^{\xi}_t$ is given by \begin{equation} \forall \ t \in [T_k,T_{k+1}),\ \tilde{\omega}^{\xi}_t=e^{(t-T_k)A} \circ F_{i_k} \circ e^{(T_k-T_{k-1})A} \circ F_{i_{k-1}} \circ \cdots \circ e^{T_1 A} \omega_0\ , \label{omega} \end{equation}
where $F_i$ is the map $\omega = (\r,\p) \to (\r,\p^i)$.

If initially the process starts from $\omega_0$ which is distributed according to a Gaussian measure $\mu_0^N$, then $\tilde{\omega}^{\xi}_t$ is distributed according to a Gaussian measure $\tilde{\mu}^{\xi}_t$. Then,  the density $\tilde{\mu}_t^N$ is  given by \begin{equation} \tilde{\mu}^N_t(\cdot)=\int \tilde{\mu}^{\xi}_t(\cdot) \ d\P(\xi)\ .\label{law1} \end{equation}

{More precisely, the mean vector $m_t^\xi$ and the correlation matrix $C_t^\xi$ of $\tilde\mu_t^\xi$ can be related to the mean vector $m_0$ and the correlation matrix $C_0$ of $\mu_0^N$: } \begin{equation}  m_t^{\xi}=e^{(t-T_k)A}\cdot \Sigma_{i_k} \cdot e^{(T_k-T_{k-1})A} \cdot \Sigma_{i_{k-1}} \cdots e^{T_1A} \cdot m_0\ , \label{moypoiss} \end{equation}
{ and}
\begin{equation}  C^{\xi}_t=e^{(t-T_k)A} \cdot \Sigma_{i_k} \cdot e^{(T_k-T_{k-1})A}  \cdots \Sigma_{i_1} \cdot e^{T_1A} \cdot C_0 \cdot e^{-T_1A} \cdot \Sigma_{i_1}^\ast \cdots   e^{-(T_k-T_{k-1})A}  \cdot \Sigma_{i_{k}}^\ast e^{-(t-T_k)A}\ .\label{corrpoiss} \end{equation} {Equations \eqref{moypoiss} and \eqref{corrpoiss} also give a graphical representation of the process $(m_t,C_t)_{t\geqslant 0}$: during the interval $[0,T_1)$, $m_t$ follows the evolution given by the (vectorial) system \begin{equation}\frac{dy}{dt}=F(y)\end{equation} (where $F$ has been previously  introduced  for the process $\tilde \omega_t$). At time $T_1$, the component $m_{i_1+N}$ (which corresponds to the mean of $p_{i_1}$) is flipped, and gives a new mean vector. Then, the deterministic evolution goes on up to the time of the next flip, and so on. 

In the same way, during the interval $[0,T_1)$, $C_t$ follows the evolution given by the (matrix) system: \begin{equation}\frac{dM}{dt}=-AM+MA\end{equation} (where $A$ has been previously defined). At time $T_1$, all the components $C_{i_1,j}$ and $C_{i,i_1}$ when $j \neq i_1$ and $i\neq i_1$ are flipped and the matrix $C_{T_1}$ becomes $\Sigma_{i_1} \cdot C_{T_1} \cdot \Sigma_{i_1}^*$\ .  The generator of this Markov process $(m_t,C_t)_{t \geqslant 0}$ is exactly the one defined by \eqref{markov}. Consequently, for $t \geqslant 0$, the law of the random variable $(m_t,C_t)$ is $\theta_{m_0,C_0}^t$, where}  \begin{equation} m_0=\left(-\frac{\lambda_0}{\beta_0}\left(\frac{0}{N}\right),\cdots,-\frac{\lambda_0}{\beta_0}\left(\frac{N-1}{N}\right), \underbrace{0, \cdots, 0}_N\right) \end{equation}
and 
 \begin{equation} C_0=\begin{pmatrix} D & 0_N \\ 0_N & D' \end{pmatrix}\  \text{ where } \left\{ \begin{aligned} D & =\text{diag}\left(\cdots, \frac{1}{\beta_0(x/N)} + \frac{\lambda_0^2(x/N)}{\beta_0^2(x/N)}, \cdots \right)\ , \\
D' & =\text{diag}\left(\cdots, \frac{1}{\beta_0(x/N)}, \cdots \right)\ ,\end{aligned} \right.  \end{equation} as it can be deduced from Lemma \ref{lem:matrix}.  Recall that in this section, $\mu_0^N$ is given by 
\begin{equation}\mu_0^N(d\r,d\p)=\prod_{x \in \T_N} \frac{\exp\left(-\beta_0\left(x/N\right) e_x-\lambda_0(x/N  )r_x\right) }{Z(\beta_0(\cdot),\lambda_0(\cdot))} dr_x dp_x\ .\end{equation}

It follows that  the density $\tilde{\mu}_t^N$ is  equal to  \begin{equation} \tilde{\mu}^N_t(\cdot)=\int \tilde{\mu}^{\xi}_t(\cdot) \ d\P(\xi)=\int G_{m,C}(\cdot) \ d\theta_{m_0,C_0}^t(m,C)\ .\label{law} \end{equation}

 \end{proof}

\textbf{Remark.} Observe that \begin{equation}\tilde{\mu}_t^N[p_x] = \int G_{m,C}(p_x) \ d\theta_{m_0,C_0}^t(m,C)=\int \pi_x \ d\theta_{m_0,C_0}^t(m,C)=\mathbb{E}_{m_0,C_0} [\pi_x(t)]\ ,\end{equation}
\begin{equation}\tilde{\mu}_t^N[r_x] = \int G_{m,C}(r_x) \ d\theta_{m_0,C_0}^t(m,C)=\int \rho_x \ d\theta_{m_0,C_0}^t(m,C)=\mathbb{E}_{m_0,C_0} [\rho_x(t)]\ .\end{equation}

\begin{lem}\label{inequ} Let $(m_t,C_t)_{t \geqslant 0}$ be the Markov process defined above. As previously done, we introduce $\rho(t), \pi(t) \in \R^N$ and $U(t), V(t), Z(t) \in \mathfrak{M}_N(\R)$ such that \begin{equation}m_t=(\rho(t),\pi(t)) \ \text{ and } \ C_t=\begin{pmatrix} U(t) & Z^\ast(t) \\ Z(t) & V(t) \end{pmatrix}\end{equation} Then,
\begin{equation}\P_{m_0,C_0} \text{ - a. s. }, \ \forall \ t \geqslant 0, \ \left\{ \begin{aligned}\pi_y^2(t) & \leqslant V_{y,y}(t)\ , \\ \rho_y^2(t) & \leqslant U_{y,y}(t)\ . \end{aligned} \right.\end{equation}

\end{lem}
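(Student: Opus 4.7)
The two inequalities are simply Jensen's inequality $\nu[X]^2 \leq \nu[X^2]$, applied coordinate-wise under a probability measure whose mean vector is $m_t$ and whose second-moment (``correlation'') matrix is $C_t$. The whole content of the proof is therefore to exhibit such a measure almost surely along the trajectory, and to recognise it as the Gaussian marginal of the underlying velocity-flip process conditional on its Poisson clocks.

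Concretely, I would re-use the graphical construction from the proof of Lemma \ref{lem:pp}. Conditionally on a realisation $\xi = ((i_k, T_k))_{k \geq 1}$ of the Poisson clocks, formula \eqref{omega} displays $\tilde\omega_t^\xi$ as a deterministic affine image of the initial random variable $\omega_0 \sim \mu_0^N$, namely a composition of the linear harmonic flows $e^{sA}$ and the linear flip involutions $\Sigma_{i_k}$. Because $\mu_0^N$ is Gaussian, the push-forward $\tilde\mu_t^\xi$ is again Gaussian, with mean vector $m_t^\xi$ and correlation matrix $C_t^\xi$ given explicitly by \eqref{moypoiss} and \eqref{corrpoiss}. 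The distribution of $(m_t, C_t)$ under $\P_{m_0, C_0}$ coincides with the distribution of $(m_t^\xi, C_t^\xi)$ when $\xi$ is drawn from the Poisson law, so $\P_{m_0, C_0}$-almost surely one may identify
\begin{equation*}
\rho_y(t) = \tilde\mu_t^\xi[r_y], \quad U_{y,y}(t) = \tilde\mu_t^\xi[r_y^2], \quad \pi_y(t) = \tilde\mu_t^\xi[p_y], \quad V_{y,y}(t) = \tilde\mu_t^\xi[p_y^2].
\end{equation*}
Jensen's inequality applied to $\tilde\mu_t^\xi$ then yields $\rho_y(t)^2 \leq U_{y,y}(t)$ and $\pi_y(t)^2 \leq V_{y,y}(t)$, which is the claim.

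If one prefers to argue intrinsically on the Markov process $(m_t, C_t)$, the same conclusion follows by checking that $K_t := C_t - m_t^t m_t$ is positive semidefinite for every $t \geq 0$, and reading off its diagonal entries: the initial $K_0$ is diagonal with non-negative entries (cf.\ Lemma \ref{lem:matrix}); between flips the harmonic flow conjugates $K_t$ by $e^{sA}$, which is orthogonal since $A^t = -A$; and at each flip $K_t$ is conjugated by $\Sigma_k = \Sigma_k^\ast$. Both operations preserve positive semidefiniteness, hence so does their composition. I do not anticipate any real obstacle; the only technical point is the pathwise coupling with the Poisson realisation, and that is already built into the graphical construction.
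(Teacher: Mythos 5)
Your proposal is correct and is essentially the paper's own argument: conditionally on the Poisson realisation $\xi$, the law of the configuration is the Gaussian $\tilde\mu_t^\xi$ with mean $m_t^\xi$ and correlation matrix $C_t^\xi$, so $C_t-m_t\,{}^tm_t$ is a covariance matrix, hence positive semidefinite with nonnegative diagonal entries — which is exactly your coordinate-wise Jensen inequality. Your second, intrinsic variant (propagating positive semidefiniteness through the congruences by $e^{sA}$ and $\Sigma_k$) is also sound and needs no further justification.
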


\begin{proof} First of all, let us notice that the quantities $V_{y,y}(t)-\pi_y^2(t)$ and $U_{y,y}(t)-\rho_y^2(t)$ are the diagonal components of the symmetric matrix $S_t:=m_t\cdot \! ^tm_t - C_t$. From Lemma \ref{lem:pp}, we have \begin{equation}S_t=\int S_t^\xi \ d\P(\xi) \ . \end{equation} 

For any sequence of sites and ordered times  $\xi=(i_1,T_1),\dots,(i_k,T_k),\dots$, the symmetric matrix $S_t^\xi$ is positive because this is the matrix of covariances of $\tilde\omega_t^\xi$. It follows that $S_t$ is positive, and its diagonal components are all positive. \end{proof}

\textbf{Remark.} In the case of the pinned chain, the matrix $A$ is slightly different, but all the notations and conclusions are still valid. The initial { correlation} matrix for the pinned model is not more diagonal, but has non-trivial values on the upper and lower diagonals. The initial mean vector is equal to $0_{\R^{2N}}$.

\subsection{The Evolution of $(m_t, C_t)_{t \geqslant 0}$}\label{subsec:evolution}

Thanks to the regularity of $\beta_0$ and $\lambda_0$, we know that there exists a constant $K$ which does not depend on $N$ such that 
\begin{equation} \left\{ \begin{aligned} & \frac{1}{N} \sum_{i,j} \left[ (U_{i,j})^2(0) + (V_{i,j})^2(0) + 2 (Z_{i,j})^2(0)  \right] \leqslant K \ ,\\ & \frac{1}{N} \sum_{i}\left[U_{i,i}(0) + V_{i,i}(0)  \right] \leqslant K\ , \\
& \frac{1}{N} \sum_i \left[(U_{i,i})^k(0) + (V_{i,i})^k(0)\right]  \leqslant K^k\ , \ \text{for all } k\geqslant 1 \ . \end{aligned} \right. \label{hyp} \end{equation}
Moreover, one can easily show that \begin{equation} \mathcal{G}\left(\sum_{i,j} (U_{i,j})^2+(V_{i,j})^2+2(Z_{i,j})^2\right)=0 \quad \text{ and } \quad \mathcal{G}\left(\sum_{i} U_{i,i}+ V_{i,i}\right)=0\ .\end{equation}
It results that the two first inequalities of \eqref{hyp} are actually uniform in $t$, in the sense that 
\begin{equation} \left\{ \begin{aligned} & \frac{1}{N} \mathbb{E}_{m_0,C_0} \left[\sum_{i,j} \left[ (U_{i,j}(t))^2 + (V_{i,j}(t))^2 + 2 (Z_{i,j}(t))^2 \right]\right] \leqslant K\ , \\ & \frac{1}{N} \mathbb{E}_{m_0,C_0} \left[\sum_{i}\left[U_{i,i}(t)+ V_{i,i}(t) \right]\right] \leqslant K\ .\end{aligned} \right. \label{unif}\end{equation}
We are going to see how this last inequality can be used in order to show \eqref{mom}. We denote by $u_k(t)$ and $v_k(t)$ the two quantities \begin{equation}\left\{ \begin{aligned} u_k(t) & = \mathbb{E}_{m_0,C_0} \left[ \sum_{i \in \T_N} U_{i,i}^k(t)  \right]\ , \\ v_k(t) & = \mathbb{E}_{m_0,C_0}\left[\sum_{i \in \T_N} V_{i,i}^k(t) \right]\ . \end{aligned} \right.\end{equation}
Let us make the link with \eqref{mom}. In view of \eqref{law}, we can write \begin{equation}\tilde{\mu}_t^N \left[p_y^{2k}\right] = \int G_{m,C}\left[p_y^{2k}\right] \ d\theta^t_{m_0,C_0}(m,C)\ ,\end{equation}
\begin{equation}\tilde{\mu}_t^N \left[r_y^{2k}\right] = \int G_{m,C}\left[r_y^{2k}\right] \ d\theta^t_{m_0,C_0}(m,C)\ .\end{equation}
We use the convexity inequality $(a+b)^{2k} \leqslant 2^{2k-1} \ (a^{2k} + b^{2k} )$ - which is true for all $a,b \in \R$ - to get \begin{align} \tilde{\mu}_t^N \left[p_y^{2k}\right]& = \int G_{m,C}\left[(p_y-\pi_y+\pi_y)^{2k}\right] \ d\theta^t_{m_0,C_0}(m,C) \notag\\
& \leqslant 2^{2k-1} \int G_{m,C} \left[ (p_y - \pi_y)^{2k} \right] \ d\theta^t_{m_0,C_0}(m,C) + 2^{2k-1} \int \pi_y^{2k} \ d\theta_{m_0,C_0}^t(m,C)\ .\end{align}
We deal with the two terms of the sum, separately. First, observe that Gaussian centered moments are easily computable: \begin{equation}G_{m,C}\left[ (p_y-\pi_y)^{2k}\right] = \left(V_{y,y}-\pi_y^2\right)^k \frac{(2k)!}{k! \ 2^k}\ .\end{equation}
Then, \begin{equation}\sum_{y \in \T_N} \int \left(V_{y,y}-\pi_y^2\right)^k \frac{(2k)!}{k! \ 2^k} \ d\theta^t_{m_0,C_0}(m,C) \leqslant \frac{(2k)!}{k! \ 2^k} \left( v_k(t) + \mathbb{E}_{m_0,C_0} \left[\sum_{y \in \T_N} \pi_y^{2k}(t)\right] \right)\ .\end{equation}
In the same way, \begin{equation}\sum_{y \in \T_N} \int \left(U_{y,y}-\rho_y^2\right)^k \frac{(2k)!}{k! \ 2^k} \ d\theta^t_{m_0,C_0}(m,C) \leqslant \frac{(2k)!}{k! \ 2^k} \left( u_k(t) + \mathbb{E}_{m_0,C_0} \left[\sum_{y \in \T_N} \rho_y^{2k}(t)\right] \right)\ .\end{equation}
Lemma \ref{inequ} shows that  \begin{equation}\begin{aligned} \mathbb{E}_{m_0,C_0} \left[\sum_{y \in \T_N} \pi_y^{2k}(t)\right] & \leqslant \mathbb{E}_{m_0,C_0} \left[ \sum_{y \in \T_N} V_{y,y}^k(t)\right]=v_k(t)\ , \\  \mathbb{E}_{m_0,C_0} \left[\sum_{y \in \T_N} \rho_y^{2k}(t)\right] & \leqslant \mathbb{E}_{m_0,C_0} \left[ \sum_{y \in \T_N} U_{y,y}^k(t)\right]=u_k(t)\ .\end{aligned}  \end{equation}
As a result, \begin{equation}\sum_{y} \tilde{\mu}_t^N \left[p_y^{2k}\right] \leqslant \frac{(2k)!}{k!} \ v_k(t) \sim 2 \left(\frac{4}{e}\right)^k  k^k \ v_k(t)\ ,\end{equation} \begin{equation}\sum_{y} \tilde{\mu}_t^N \left[r_y^{2k}\right] \leqslant \frac{(2k)!}{k!} \ u_k(t)\sim 2 \left(\frac{4}{e}\right)^k  k^k \ u_k(t)\ .\end{equation}
In a few words, to get \eqref{mom}, we need to estimate the two quantities $u_k(t)$ and $v_k(t)$, which are  related to  $C_t$. That is what we do in the next section. 

\textbf{Remark.} In the case of the pinned model, the  $p_x$ and $q_x$  remain centered during the evolution: for all $t>0$, $m_t=0_{\R^{2N}}\ . $ This simplifies the study since we do not need to center the variables. The result is the same: we need to estimate $u_k(t)$ and $v_k(t)$.

\subsection{The { Correlation} Matrix}\label{ss:cor}

\begin{lem} \label{lem:cov} For { any integer $k$ not equal to 0}, there exists  a positive constant  $K$ which does not depend on $N$ and $t$ such that \begin{equation} \left\{ \begin{aligned} & v_k(t) \leqslant K^k \ N\ , \\  & u_k(t)  \leqslant K^k \ N\ .  \end{aligned}\right. \end{equation} 

\end{lem}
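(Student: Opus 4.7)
The proof plan hinges on a structural observation: both the Hamiltonian flow and the velocity flips act on the correlation matrix by \emph{orthogonal conjugation}, and hence preserve its spectrum. Indeed, the matrix $A$ given by \eqref{matrix} satisfies $A^\ast = -A$ (as can be checked directly from its block structure), so every exponential $e^{sA}$ is an orthogonal matrix; moreover, each $\Sigma_k = I_{2N} - 2 E_{N+k,N+k}$ is a reflection, hence orthogonal. The representation \eqref{corrpoiss} therefore reads
\begin{equation*}
C_t^\xi = O_t^\xi \, C_0 \, (O_t^\xi)^\ast, \qquad O_t^\xi := e^{(t-T_k)A}\, \Sigma_{i_k}\, e^{(T_k-T_{k-1})A} \cdots \Sigma_{i_1}\, e^{T_1 A},
\end{equation*}
with $O_t^\xi$ orthogonal. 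Hence $C_t^\xi$ and $C_0$ are orthogonally similar: they share the same spectrum, and in particular $\|C_t^\xi\|_{\mathrm{op}} = \|C_0\|_{\mathrm{op}}$ for every realization $\xi$.

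Since $C_t$ is the correlation matrix of a probability measure on $\Omega^N$, it is positive semi-definite, so its diagonal entries are bounded by its largest eigenvalue: $U_{i,i}(t) \leqslant \|C_t\|_{\mathrm{op}}$ and $V_{i,i}(t) \leqslant \|C_t\|_{\mathrm{op}}$, $\P_{m_0, C_0}$-almost surely. Raising to the $k$-th power, summing over $i \in \T_N$, and taking the expectation with respect to $\P_{m_0, C_0}$ yields
\begin{equation*}
u_k(t) \leqslant N\, \|C_0\|_{\mathrm{op}}^k \qquad \text{and} \qquad v_k(t) \leqslant N\, \|C_0\|_{\mathrm{op}}^k.
\end{equation*}

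It only remains to estimate $\|C_0\|_{\mathrm{op}}$ by a constant independent of $N$ and $t$. By Lemma \ref{lem:pp}, $C_0$ is a diagonal matrix with entries $\beta_0^{-1}(x/N) + \lambda_0^2(x/N)\beta_0^{-2}(x/N)$ and $\beta_0^{-1}(x/N)$, so its operator norm is the maximum of these quantities over $x \in \T_N$. The continuity of $\beta_0$ and $\lambda_0$ on $\T$, together with the lower bound $\beta_0 \geqslant c > 0$ (cf. \eqref{eq:beta}), provides a constant $K > 0$, independent of $N$, such that $\|C_0\|_{\mathrm{op}} \leqslant K$. This immediately gives $u_k(t), v_k(t) \leqslant K^k N$, as required.

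The hard part is spotting the orthogonal-conjugation structure in the first place: the deterministic piece is covered because $A$ is antisymmetric, while the stochastic flips look superficially like sign changes on certain entries of $C$ but turn out to be conjugations by the involutions $\Sigma_k$. Once this spectrum-preserving interpretation is identified, the rest of the argument is elementary linear algebra. A direct generator-based approach—computing $\mathcal{G}_N(\sum_i U_{i,i}^k) = 2k \sum_i U_{i,i}^{k-1}(Z_{i,i} - Z_{i+1,i})$, controlling $Z$ by $\sqrt{U V}$ via positivity of $C$, and closing a Gronwall loop—also works, but yields prefactors that grow exponentially in $t$ and obscures the spectrum-conservation that is the spectral counterpart of the ``remarkable property'' mentioned in the introduction.
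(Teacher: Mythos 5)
Your proof is correct and rests on the same structural fact as the paper's: the representation \eqref{corrpoiss} exhibits $C_t^\xi$ as a conjugate of $C_0$, so its spectrum is frozen along the evolution. The two arguments diverge only in how this invariance is exploited. The paper uses bare similarity to get $\text{Tr}([C_t^\xi]^k)=\text{Tr}(C_0^k)\leqslant N K^k$, and then bounds $\sum_i [C^\xi_{i,i}]^k(t)$ by this trace via the spectral decomposition of the symmetric matrix $C_t^\xi$ together with Jensen's inequality applied to the weights $P^\ast_{i,j}P_{j,i}$ summing to $1$. You instead observe that the conjugating matrix is orthogonal (indeed $A^\ast=-A$ from \eqref{matrix} — the infinitesimal form of energy conservation by the Hamiltonian flow — and each $\Sigma_k$ is a reflection), so the operator norm itself is conserved, and each diagonal entry of the positive semi-definite matrix $C_t^\xi$ is bounded pointwise by $\Vert C_0\Vert_{\mathrm{op}}\leqslant K$. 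This yields the slightly stronger almost-sure bound $C_{i,i}(t)\leqslant K$ for every $i$ and every realization $\xi$, rather than only the averaged bound $\sum_i C_{i,i}^k(t)\leqslant N K^k$; both are more than enough for the lemma, and your route avoids the convexity step entirely. One cosmetic remark: the lower bound $\beta_0\geqslant c>0$ that you invoke is the hypothesis underlying \eqref{eq:beta} (continuity and strict positivity of $\beta_0$ on the compact torus), not \eqref{eq:beta} itself, which concerns $\beta_t$ for $t>0$.
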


\begin{proof} First of all,  { \eqref{unif}}  shows that, uniformly in $t$, \begin{equation} \left\{ \begin{aligned} u_1(t) &\leqslant K N \\ u_2(t) & \leqslant K N\end{aligned}\right. \ \text{ and } \ \left\{ \begin{aligned} v_1(t) & \leqslant KN \\ v_2(t) & \leqslant KN\ . \end{aligned} \right.\end{equation} We observe that \begin{equation} u_k(t) + v_k(t) =\mathbb{E}_{m_0,C_0} \left[ \sum_{i \in \T_N} C_{i,i}^k(t)\right] = \int \sum_{i \in \T_N} (C_{i,i}^\xi)^k(t) \ d\P(\xi)\ .\end{equation}

Thanks to the dynamics description, we know the expression of the { correlation} matrix: conditionally to $\xi$,  for all $t \in [T_k,T_{k+1})$, 
\begin{equation}C^{\xi}(t)=e^{(t-T_k)A} \cdot \Sigma_{i_k} \cdot e^{(T_k-T_{k-1})A} \cdots \Sigma_{i_1} \cdot e^{T_1A} \cdot C_0 \cdot e^{-T_1A} \cdot \Sigma_{i_1}^\ast \cdots   e^{-(T_k-T_{k-1})A}  \cdot \Sigma_{i_{k}}^\ast e^{-(t-T_k)A}\ ,  \end{equation}
Consequently, since $C_0$ and $C^{\xi}(t)$ are similar, we have: \begin{equation}\forall \  k \in \N,\ \text{Tr}([C^{\xi}(t)]^k)=\text{Tr}(C^k_0)=O(N)\ . \end{equation}
More precisely, \begin{equation} \text{Tr}(C^k_0)=\sum_{i \in \T_N} U_{i,i}^k(0)+ V_{i,i}^k(0) = \sum_{i \in \T_N} \frac{1}{\beta_0^k(i/N)} + \left(\frac{1}{\beta_0(i/N)} + \frac{\lambda^2_0(i/N)}{\beta^2_0(i/N)}\right)^k\ .\end{equation} From  \eqref{hyp}  we get $\text{Tr}(C^k_0) \leqslant N  K^k$, where $K$ does not depend on $N$, $\xi$ and $t$: \begin{equation}K:= \sup_{u \in [0,1]} \left\{ \frac{1}{\beta_0(u)} + \frac{\lambda_0^2(u)}{\beta_0^2(u)}\right\}\ .\end{equation} Now we show that the same inequality holds for $ \sum_i [C_{i,i}^\xi]^k(t)$. The matrix $C^\xi(t)$ is symmetric, hence diagonalizable, and after denoting its eigenvalues by $\lambda_1, ..., \lambda_{2N}$, we can write \begin{equation}\text{Tr}([C^\xi(t)]^k)=\sum_{i=1}^{2N} \lambda_i^k\ .\end{equation}
We have now to compare $  \sum_{i=1}^{2N} \lambda_i^k$ with $\sum_{i=1}^{2N} [C_{i,i}^\xi]^k(t)$. But, if we denote by $P$ the orthogonal matrix of the eigenvectors of $C^\xi(t)$, then we get $C^\xi(t)=(P_t^\xi)^{\ast} \cdot D \cdot P_t^\xi$, where $D$ is the diagonal matrix with the eigenvalues $\lambda_1,..., \lambda_{2N}$. For the sake of simplicity, we denote by $(P_{i,j})_{i,j}$ the components of $P_t^\xi$. Then, \begin{equation} [C^\xi_{i,i}]^k(t)  =\left( \sum_{j,l} P^{\ast}_{i,j} D_{j,l} P_{l,i}\right)^k  = \left( \sum_j P_{i,j}^{\ast} \lambda_j P_{j,i} \right)^k = \left(\sum_{j} P_{i,j}^{\ast} P_{j,i} \cdot \lambda_j\right)^k.\end{equation}
But, $ \sum_{j} P_{i,j}^{\ast} P_{j,i}=1$, since $D$ is an orthogonal matrix. 
Consequently, we can use the convexity inequality, and we obtain 
\begin{equation} \sum_{i } [C^\xi_{i,i}]^k (t) \leqslant \sum_i \sum_j P_{i,j}^{\ast} P_{j,i}  \lambda_j^k \leqslant \sum_j \lambda_j^k = \text{Tr}([C^\xi(t)]^k) \leqslant N  K^k\ .\end{equation}
Hence, \begin{equation}u_k(t)+v_k(t) \leqslant \int N K^k \ d\P(\xi) \leqslant N K^k\ .\end{equation} \end{proof}

\textbf{Remark.} We notice that the same proof works for the pinned case. The only difference is about the initial matrix $C_0$, but the smoothness of the profile $\beta_0$ is still true, and the estimate $\text{Tr}(C_0^k)=O(N)$ is valid.

\subsection{When $\mu_0^N$ is a Convex Combination of Gibbs Measures}\label{subsec:convex}

As in Theorem \ref{theo:moments}, we now suppose that the initial probability measure $\mu_0^N$ is a convex combination of Gibbs states defined by \begin{equation} \mu_0^N(\cdot)=\int G_{m_0,C_0}(\cdot) \ d\sigma(m_0,C_0)\ . \label{init} \end{equation} If initially the process starts from $\omega_0$ which is distributed according to a Gaussian measure $G_{m_0,C_0}$, we know from Lemma \ref{lem:pp} that $\tilde{\omega}_t$ is distributed according to a convex combination of Gaussian measures written as \begin{equation}\int G_{m,C}(\cdot) \ d\theta^t_{m_0,C_0}(m,C)\ .\end{equation} 
Consequently, in the case where $\mu_0^N$ is given by \eqref{init}, the law of the process $\tilde{\omega}_t$ is given by \begin{equation}\tilde{\mu}_t^N(\cdot)= \int \left\{ \int  G_{m,C}(\cdot) \ d\theta^t_{m_0,C_0}(m,C) \right\} \ d\sigma(m_0,C_0)\ .\end{equation}
Let us recall that we want to control, for $k \geqslant 1$, $ \tilde{\mu}_t^N \left[\sum_{x \in \T_N} p_x^{2k}\right]$ and $\tilde{\mu}_t^N\left[\sum_{x \in \T_N} r_x^{2k} \right].$
Following the lines of the previous section, we notice that it is sufficient to control two quantities: \begin{equation}\left\{ \begin{aligned} & \int  \mathbb{E}_{m_0,C_0} \left[\sum_{i \in \T_N} U_{i,i}^k(t) \right]  d\sigma(m_0,C_0)\ , \\ & \int \mathbb{E}_{m_0,C_0} \left[\sum_{i \in \T_N} V_{i,i}^k(t) \right]  d\sigma(m_0,C_0)\ . \end{aligned} \right. \end{equation} 
Lemma \ref{lem:cov} gives a constant $C(\lambda_0,\beta_0)$ which does not depend on $N$ and $t$ such that \begin{equation}\left\{ \begin{aligned}\mathbb{E}_{m_0,C_0} \left[\sum_{i \in \T_N} U_{i,i}^k (t) \right]  & \leqslant [C(\lambda_0,\beta_0)]^k \ N\ , \\ \mathbb{E}_{m_0,C_0} \left[\sum_{i \in \T_N} V_{i,i}^k (t) \right] & \leqslant [C(\lambda_0,\beta_0)]^k \ N\ . \end{aligned} \right.\end{equation}
More precisely, \begin{equation}C(\lambda_0,\beta_0)= \sup_{u \in [0,1]} \left\{ \frac{1}{\beta_0(u)} + \frac{\lambda_0^2(u)}{\beta_0^2(u)}\right\}\ . \end{equation}

In order to keep the same  control, we have to suppose that, for all $k \geqslant 1$, \begin{equation}\int [K(m,C)]^k \ d \sigma(m,C) < \infty, \ \text{ where } K(m,C):=\sup_{i \in \T_N} C_{i,i}\ . \end{equation}
Finally, let us observe that all estimates are given for $\tilde{\mu}_t^N$ but are still true for the accelerated law $\mu_t^N$. Indeed, the constants that appear do not depend on $N$ and $t$.

\appendix 

\section{Proof of the Taylor Expansions}\label{appa}

Now we prove Proposition \ref{prop}. For the sake of simplicity, we define \begin{equation}\left\{\begin{aligned} 
g_x(\r,\p) & :=-\frac{r_x}{2\gamma}(p_{x+1}+p_x+\frac{\gamma}{2}r_x)\ , \\
f_x(\r,\p) &:=-\frac{p_{x+1}}{\gamma}\ , \\
\delta_x(\r,\p) &:=\beta'_t\left(\frac{x}{N}\right) \ g_x +  \lambda'_t\left(\frac{x}{N}\right) \ f_x =  F\left(t,\frac{x}{N}\right) \cdot \tau_xh(\r,\p)\ .\end{aligned} \right. \end{equation}
First we will compute the first part that appears in the integral $ N^2 \left(\phi_t^N\right)^{-1} \L_N^* \phi_t^N$ , then we will compute the second part $\ds -\d_t \phi_t^N/ \phi_t^N \times f_t^N$.

\subsection{First Term: the Adjoint Operator}\label{subsec:adjoint}

\begin{lem} 
\begin{align}
\A\phi_t^N =& \frac{\phi_t^N}{N^2} \sum_{x \in \T_N} \beta''_t\left(\frac{x}{N}\right) \left[p_{x+1}r_x+ \frac{p_x^2+r_xr_{x-1}}{2 \gamma}\right] -\lambda''_t\left(\frac{x}{N}\right)\left[p_{x+1}+\frac{r_{x+1}}{\gamma}\right] \notag\\
& + \frac{\phi_t^N}{N^2} \sum_{x \in \T_N} \left[\L^*(\delta_x) + \A(\delta_x)\right]  + o\left(\frac{1}{N}\right).\end{align}

\end{lem}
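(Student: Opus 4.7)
The plan is to exploit that $\phi_t^N$ is a positive exponential, so $\A\phi_t^N = \phi_t^N \cdot \A(\log\phi_t^N)$, and then expand $\A(\log\phi_t^N)$ up to order $o(1/N)$. Up to an $\omega$-independent constant one has
\[
\log\phi_t^N \;=\; \sum_{x\in\T_N}\bigl[(1-\beta_t(x/N))\,e_x - \lambda_t(x/N)\,r_x\bigr] + \frac{1}{N}\sum_{x\in\T_N}\delta_x,
\]
with $\delta_x = \beta'_t(x/N)g_x + \lambda'_t(x/N)f_x$. Using that $\A$ is a derivation, the explicit formulas $\A e_x = p_{x+1}r_x - p_xr_{x-1}$ and $\A r_x = p_{x+1}-p_x$, and the telescopic cancellation $\sum_x \A e_x = 0$ coming from the conservation of total energy by the Hamiltonian flow, I obtain
\[
\A(\log\phi_t^N) \;=\; -\sum_x \beta_t(x/N)\,\A e_x \;-\; \sum_x \lambda_t(x/N)\,\A r_x \;+\; \frac{1}{N}\sum_x \A\delta_x.
\]

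Next I would perform a discrete summation by parts on the first two sums, which replaces $\beta_t(x/N)$ and $\lambda_t(x/N)$ by their forward differences $\beta_t((x+1)/N)-\beta_t(x/N)$ and $\lambda_t((x+1)/N)-\lambda_t(x/N)$. Taylor expanding these differences to second order in $1/N$ produces contributions at two separate scales: a leading $1/N$ term involving $\beta'_t,\lambda'_t$ together with the microscopic currents $p_{x+1}r_x$ and $p_{x+1}$, and a subleading $1/(2N^2)$ term involving $\beta''_t,\lambda''_t$; the higher-order Taylor remainders, once summed over the $N$ sites, are absorbed into the $o(1/N)$ error.

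The key reorganization step invokes the fluctuation-dissipation identities, which are precisely what motivated the choice of $g_x$ and $f_x$. A direct computation, together with $\L^* = -\A+\gamma\S$ and the single-flip rules $\gamma\S g_x = r_x(p_x+p_{x+1})/2$ and $\gamma\S f_x = p_{x+1}$, yields
\[
p_{x+1}r_x = \L^*g_x - \frac{p_{x+1}^2-p_x^2}{2\gamma} - \frac{r_x(r_{x+1}-r_{x-1})}{2\gamma},\qquad p_{x+1} = \L^*f_x - \frac{r_{x+1}-r_x}{\gamma},
\]
in which the subtracted terms are explicit discrete gradients. Substituting these into the $1/N$ pieces of $\A(\log\phi_t^N)$, the $\L^*g_x$ and $\L^*f_x$ contributions combine with $\frac{1}{N}\sum_x\A\delta_x$ via the linearity relations $\beta'_t\L^*g_x+\lambda'_t\L^*f_x = \L^*\delta_x$ and $\beta'_t\A g_x+\lambda'_t\A f_x = \A\delta_x$ to produce exactly the announced $\sum_x[\L^*\delta_x+\A\delta_x]$ sum. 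The residual discrete gradients, Abel-summed once more against the smooth weights $\beta'_t$ and $\lambda'_t$, generate additional $1/N^2$ contributions proportional to $\beta''_t(p_x^2+r_xr_{x-1})/(2\gamma)$ and $\lambda''_t r_{x+1}/\gamma$ (after index shifts that only introduce $o(1/N)$ errors), which combine with the second-order Taylor terms from the previous step to assemble the coefficients stated in the lemma.

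The main obstacle is the careful bookkeeping. One must (i) verify that the leading $1/N$ current contributions are exactly absorbed by the correction $\frac{1}{N}\sum_x\A\delta_x$ modulo discrete gradients, which is precisely the content of the fluctuation-dissipation identities and fixes the choice of $g_x,f_x$; (ii) collect the $1/N^2$ contributions coming both from the second-order Taylor expansion of $\beta_t,\lambda_t$ and from the Abel-summed gradient residues, and check that they combine in the correct proportions to recover the explicit coefficients of $p_{x+1}r_x$, $(p_x^2+r_xr_{x-1})/(2\gamma)$, $p_{x+1}$ and $r_{x+1}/\gamma$; and (iii) confirm that the Taylor remainders, index-shift differences, and higher Abel residues are all absorbed into $o(1/N)$.
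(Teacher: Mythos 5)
Your proposal follows essentially the same route as the paper's proof: express $\A\phi_t^N/\phi_t^N$ through the exponent, write $\A e_x$ and $\A r_x$ as discrete gradients of the currents, sum by parts with a second-order Taylor expansion of $\beta_t,\lambda_t$, substitute the fluctuation--dissipation decompositions of $p_{x+1}r_x$ and $p_{x+1}$ (whose $\L^*g_x,\L^*f_x$ parts recombine into $\L^*\delta_x$ by linearity), and sum by parts once more on the residual gradients to produce the $\beta''_t,\lambda''_t$ terms. The one point you treat as automatic but which the paper must argue is the absorption of the $O(N^{-3})$ Taylor remainders into $o(1/N)$: these multiply unbounded observables such as $p_{x+1}r_x$, so the bound only holds in expectation under $\mu_t^N$ and is obtained via the entropy inequality together with Gaussian exponential-moment estimates for $\nu^N_{\chi_t(\cdot)}$ (equivalently, via the moment hypothesis \eqref{mom}).
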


\begin{proof} First, remind that the expression of $\phi_t^N$ is given by  \begin{equation}\phi_t^N(\r,\p)=\frac{(Z(1,0))^n}{Z(\chi_t(\cdot))} \prod_{x\in \T_N} \exp\left(e_x\left(-\beta_t\left(\frac{x}{N}\right)+1\right)-\lambda_t\left(\frac{x}{N}\right)r_x+\frac{1}{N} F\left(t,\frac{x}{N}\right) \cdot \tau_xh(\r,\p)\right).\end{equation}
By definition, \begin{equation}\A \phi_t^N =  \phi_t^N \sum_{x \in \T_N} \left[\left(1-\beta_t\left(\frac{x}{N}\right)\right)\A(e_x)-\lambda_t\left(\frac{x}{N}\right)\A(r_x)\right] + \frac{\phi_t^N}{N} \sum_{x \in \T_N} \A(\delta_x)\ .\end{equation}
We write down the two conservation laws: \begin{align}\mathcal{A}(e_x)& =j_{x+1}^e-j_x^e \quad \text{where } j_x^e:=p_{x}r_{x-1}\ , \\ \mathcal{A}(r_x)& =j_{x+1}^r-j_x^r \quad \text{where } j_x^r:=p_x\ .\end{align}
 
 Hence, \begin{equation}\A \phi_t^N =  \phi_t^N \sum_{x \in \T_N} \left[\left(1-\beta_t\left(\frac{x}{N}\right)\right)\nabla(j_x^e)_x-\lambda_t\left(\frac{x}{N}\right)\nabla(j_x^r)_x\right] + \frac{\phi_t^N}{N} \sum_{x \in \T_N} \A(\delta_x)\ .  \end{equation} where $\nabla(f)_x=f_{x+1}-f_x$\ .

We are interesting in the first two terms in the sum, and we compute a discrete summation by part. Indeed, \begin{equation}\sum_{y \in \T_N} f_y \nabla(g)_y=-\sum_{y \in \T_N} g_{y+1} \nabla(f)_y\  .\end{equation}
We obtain the following terms: \begin{align}
\beta_t\left(\frac{x+1}{N}\right)-\beta_t\left(\frac{x}{N}\right)&=\beta_t'\left(\frac{x}{N}\right)  \frac{1}{N}+\beta_t''\left(\frac{x}{N}\right)  \frac{1}{N^2} + O\left(\frac{1}{N^3}\right),\\
\lambda_t\left(\frac{x+1}{N}\right)-\lambda_t\left(\frac{x}{N}\right)&=\lambda_t'\left(\frac{x}{N}\right) \frac{1}{N}+\lambda_t''\left(\frac{x}{N}\right) \frac{1}{N^2}+O\left(\frac{1}{N^3}\right).\end{align}
First of all, we look at the term obtained in the sum with $ O\left(N^{-3}\right)$. We want to prove \begin{equation}N^2 \int \sum_{x \in \T_N} p_{x+1}r_x \ O\left(\frac{1}{N^3}\right)  f_t^N d\mu_{1,0}^N \leqslant C \ H_N(t) + o(N)\ .\end{equation}
We use the entropy inequality. Let $\varepsilon: \N \to \R$ be a bounded function. We get \begin{equation}\frac{1}{N}\int \sum_{x \in \T_N} p_{x+1}r_x \ \varepsilon(N) f_t^N d\mu_{1,0}^N \leqslant \frac{H_N(t)}{\alpha} + \frac{1}{\alpha} \log \int \exp\left(\frac{\alpha}{N} \sum_{x}p_{x+1}r_x \ \varepsilon(N)\right) \phi_t^N d\mu_{1,0}^N\ .\end{equation}
But, let us recall the inequality $p_{x+1}r_x \leqslant (p_{x+1}^2+r_x^2)/2$ and for $N$ { large} enough, we have 
\begin{equation}\nu_{\chi_t(\cdot)}^N\left[\exp\left(\frac{\alpha}{N}p^2_x \varepsilon(N)\right)\right] \sim_{N \to \infty} \sqrt{\frac{2\pi\ 2N}{N \beta- 2\alpha \varepsilon(N)}} \times \sqrt{\frac{\beta}{2\pi}}  = O(1)\ .\end{equation}
We obtain a similar estimate for $\nu_{\chi_t(\cdot)}^N\left[\exp\left({\alpha}{N}^{-1}r_x \varepsilon(N)\right)\right]$\ .

Therefore, we have showed 
\begin{equation}\frac{1}{N}\int \sum_{x \in \T_N} p_{x+1}r^2_x \ \varepsilon(N) f_t^N d\mu_{1,0} \leqslant \frac{H_N(t)}{\alpha} + O(1)\ .\end{equation}
Hence,
\begin{align} \A \phi_t^N =  & \frac{\phi_t^N}{N} \sum_{x \in \T_N}  \left[\beta'_t\left(\frac{x}{N}\right)p_{x+1}r_{x}+\lambda'_t\left(\frac{x}{N}\right)p_{x+1}\right] + \frac{\phi_t^N}{N^2} \sum_{x \in \T_N} \left[\beta''_t\left(\frac{x}{N}\right)p_{x+1}r_{x}+\lambda''_t\left(\frac{x}{N}\right)p_{x+1}\right] \notag \\
& + \frac{\phi_t^N}{N} \sum_{x \in \T_N} \A(\delta_x) + o\left(\frac{1}{N}\right)\ . \end{align} 
Moreover, we can compute two equations which are called ``fluctuation-dissipation equations''. In other words, we decompose the current of  energy and the current of  deformation as the sum of a discrete gradient and a dissipative term: \begin{align}
p_{x+1} & = \nabla\left(\frac{-r_{x}}{\gamma}\right)_x+ \L^*(f_x)\ ,\label{diff1}\\
p_{x+1}r_x & = \nabla\left(-\frac{p_x^2+r_xr_{x-1}}{2\gamma}\right)_x + \L^*(g_x)\ . \label{diff2} \end{align}

We use the two equations $\eqref{diff1}$ and $\eqref{diff2}$, and we obtain  \begin{align} \A \phi_t^N =  & \frac{\phi_t^N}{N} \sum_{x \in \T_N}  \left\{\beta'_t\left(\frac{x}{N}\right)\left[\nabla\left(-\frac{p_x^2+r_xr_{x-1}}{2\gamma}\right)_x + \L^*(g_x)\right]+\lambda'_t\left(\frac{x}{N}\right)\left[\nabla\left(\frac{-r_{x}}{\gamma}\right)_x+ \L^*(f_x)\right]\right\} \notag \\
&  + \frac{\phi_t^N}{N^2} \sum_{x \in \T_N} \left[\beta''_t\left(\frac{x}{N}\right)p_{x+1}r_{x}+\lambda''_t\left(\frac{x}{N}\right)p_{x+1}\right]  + \frac{\phi_t^N}{N} \sum_{x \in \T_N} \A(\delta_x)  + o\left(\frac{1}{N}\right).\end{align} 
We sum again by part, on the two terms with a gradient, and we obtain as before 
\begin{align} \A \phi_t^N =  & \frac{\phi_t^N}{N^2} \sum_{x \in \T_N}  \left\{\beta''_t\left(\frac{x}{N}\right)\left[\frac{p_{x+1}^2+r_xr_{x+1}}{2\gamma} + p_{x+1}r_x \right]+\lambda''_t\left(\frac{x}{N}\right)\left[\frac{r_{x+1}}{\gamma}+ p_{x+1}\right]\right\} \notag \\
&  + \frac{\phi_t^N}{N} \sum_{x \in \T_N} \{\A(\delta_x)+\L^*(\delta_x) \} + o\left(\frac{1}{N}\right).\end{align} 
We get the result. \end{proof}

\begin{lem}\label{lem:sym}

\begin{equation}\S \phi_t^N=\frac{\phi_t^N}{ N}\sum_{x \in \T_N}\S(\delta_x) + \frac{\phi_t^N}{4N^2} \sum_{y \in \T_N} \left( \sum_{x \in \T_N} \delta_x(\p^y)-\delta_x(\p)\right)^2 + \phi_t^N \ \varepsilon(N)\ , \end{equation} where $\ds \mu_t^N\left[ N^2 \varepsilon(N) \right] = o(N)$\ .
\end{lem}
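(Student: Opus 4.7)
The plan is to exploit the product structure of $\phi_t^N$ to obtain an exact closed form for the ratio $\phi_t^N(\r,\p^y)/\phi_t^N(\r,\p)$, then Taylor expand the exponential and control the remainder via the moment bounds of Theorem \ref{theo:moments}.

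First I would observe that both $e_x=(p_x^2+r_x^2)/2$ and $r_x$ are invariant under the flip $\p\mapsto\p^y$, so the only part of $\phi_t^N$ that changes is the correction $\exp\bigl(\tfrac{1}{N}\sum_x \delta_x\bigr)$. Setting $\Delta_y(\r,\p):=\sum_{x\in\T_N}\bigl[\delta_x(\r,\p^y)-\delta_x(\r,\p)\bigr]$, this gives
\begin{equation*}
\phi_t^N(\r,\p^y)=\phi_t^N(\r,\p)\,\exp\!\left(\tfrac{\Delta_y}{N}\right),
\qquad
\S\phi_t^N=\tfrac{\phi_t^N}{2}\sum_{y\in\T_N}\left[\exp\!\left(\tfrac{\Delta_y}{N}\right)-1\right].
\end{equation*}
Since $\delta_x$ depends on $\p$ only through $p_x,p_{x+1}$, only the indices $x\in\{y-1,y\}$ contribute to $\Delta_y$, and an explicit computation using the definitions of $g_x,f_x$ gives
\begin{equation*}
\Delta_y=\tfrac{p_y}{\gamma}\left[\beta'_t(y/N)\,r_y+\beta'_t((y-1)/N)\,r_{y-1}+2\lambda'_t((y-1)/N)\right].
\end{equation*}
In particular $\Delta_y$ is a local function supported on sites $\{y-1,y\}$.

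Next I would Taylor expand the exponential to second order,
\begin{equation*}
\exp(\Delta_y/N)-1=\tfrac{\Delta_y}{N}+\tfrac{\Delta_y^2}{2N^2}+R_y,
\qquad |R_y|\leqslant \tfrac{|\Delta_y|^3}{6N^3}\,e^{|\Delta_y|/N}.
\end{equation*}
The linear term yields
$\tfrac{1}{2N}\sum_y\Delta_y=\tfrac{1}{N}\sum_x\S(\delta_x)$,
because by Fubini $\tfrac{1}{2}\sum_y[\delta_x(\p^y)-\delta_x(\p)]=\S(\delta_x)$ for each fixed $x$. The quadratic term gives exactly $\tfrac{1}{4N^2}\sum_y\bigl(\sum_x[\delta_x(\p^y)-\delta_x(\p)]\bigr)^2$, matching the second term of the claim.

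It remains to verify that $\varepsilon(N):=\tfrac{1}{2}\sum_y R_y$ satisfies $\mu_t^N[N^2\varepsilon(N)]=o(N)$. From the explicit form of $\Delta_y$ above, there is a constant $C$ (depending on $\sup\|\beta'_t\|_\infty,\sup\|\lambda'_t\|_\infty$) such that $|\Delta_y|\leqslant C(1+|r_y|+|r_{y-1}|)|p_y|$. On the event where $|\Delta_y|\leqslant N$, which covers all but a negligible contribution, one has $|R_y|\leqslant C|\Delta_y|^3/N^3$, so
\begin{equation*}
N^2\,\mu_t^N\!\left[\sum_y|R_y|\mathbf{1}_{|\Delta_y|\leqslant N}\right]\leqslant \tfrac{C}{N}\sum_y \mu_t^N\bigl[(1+|r_y|+|r_{y-1}|)^3|p_y|^3\bigr].
\end{equation*}
By Cauchy--Schwarz this is bounded by $\tfrac{C}{N}\sum_y\bigl(\mu_t^N[e_y^3+e_{y-1}^3+1]\bigr)^{1/2}\bigl(\mu_t^N[p_y^6]\bigr)^{1/2}$, which by the moment bounds \eqref{mom} is $O(1)=o(N)$. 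The contribution of the complementary event $\{|\Delta_y|>N\}$ is controlled by Markov's inequality applied at a large power $k$: one gets $\mu_t^N[|\Delta_y|^k]\leqslant C(k)$ and thus $\mu_t^N[|R_y|\mathbf{1}_{|\Delta_y|>N}]\leqslant C(k)/N^k$, which for $k$ large enough makes the contribution of this event negligible after multiplying by $N^2$ and summing over $y\in\T_N$.

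The main obstacle is precisely this last step: the absence of uniform exponential moments (emphasized in the introduction) forces one to rely on the polynomial moment bounds of Theorem \ref{theo:moments}, and to split the expectation into a bulk piece on $\{|\Delta_y|\leqslant N\}$ where the cubic Taylor bound applies, and a tail piece whose probability decays polynomially of arbitrary high order thanks to \eqref{mom}. Once this bookkeeping is done, both pieces contribute $o(N)$ after multiplication by $N^2$, which concludes the proof.
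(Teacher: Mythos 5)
Your decomposition is the same as the paper's: you identify that only the correction $\exp\bigl(\tfrac1N\sum_x\delta_x\bigr)$ changes under a flip, compute $\Delta_y$ (the paper's $F_y$) explicitly and correctly, and the linear and quadratic Taylor terms reproduce the two displayed terms of the lemma exactly. The problem is the remainder. You use the Lagrange form $|R_y|\leqslant \tfrac{|\Delta_y|^3}{6N^3}e^{|\Delta_y|/N}$ and then, on the tail event $\{|\Delta_y|>N\}$, claim $\mu_t^N[|R_y|\mathbf{1}_{|\Delta_y|>N}]\leqslant C(k)/N^k$ "by Markov's inequality applied at a large power $k$". This does not follow: Markov controls the \emph{probability} of the event, but $|R_y|$ is not bounded on that event — it carries the factor $e^{|\Delta_y|/N}$, which on $\{|\Delta_y|>N\}$ is at least $e$ and grows without bound. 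Any attempt to decouple (e.g.\ Cauchy--Schwarz) leaves you needing $\mu_t^N\bigl[e^{c|\Delta_y|/N}\bigr]$ or worse, i.e.\ exactly the exponential moments whose absence the introduction emphasizes as the central obstacle of the model. As written, the tail estimate is a genuine gap, and it sits at the one delicate point of the lemma.

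The paper sidesteps the Lagrange remainder entirely: it writes the error as the explicit convergent series $\sum_{k\geqslant 3}F_y^k/(k!\,N^k)$, bounds $|F_y|\leqslant K(1+e_y+e_{y-1})$, splits on the event $\{e_y\leqslant 1,\ e_{y-1}\leqslant 1\}$ (where the series is summed deterministically) and its complement (where each term is estimated separately using the quantitative moment bounds \eqref{mom}, $\mu_t^N[\sum_x e_x^k]\leqslant (Ck)^kN$ for \emph{every} $k$). The point is that
\begin{equation*}
N^2\sum_{k\geqslant 3}\frac{(Ck)^k}{k!\,N^k}=C^2\,S\!\left(\frac{C}{N}\right)\xrightarrow[N\to\infty]{}0,
\end{equation*}
where $S(x)=\sum_{k\geqslant 3}k^k x^{k-2}/k!$ has a strictly positive radius of convergence since $k^k/k!\sim e^k/\sqrt{2\pi k}$. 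Summing term by term replaces the uncontrolled exponential factor by a series whose $k$-th term needs only the $k$-th polynomial moment, which is available. If you restructure your tail argument this way — or alternatively justify a uniform bound on $\mu_t^N[e^{c(e_y+e_{y-1})/N}]$ for large $N$ using the Gaussian-mixture structure, which you do not currently invoke — the proof closes.
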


\begin{proof} Thanks to the exponential term, we have
\begin{equation}\S\phi_t^N=\frac{\phi_t^N}{2} \sum_{y \in \T_N} \left\{\exp\left[ \frac{1}{N}\sum_{x \in \T_N} \delta_x(\p^y)-\delta_x(\p)\right]-1\right\}\ . \end{equation}
The main idea consists in noting that $e^x-1=x+ {x^2}/{2}+o(x^2)$. We are going to give a rigorous proof of this estimate in our context thanks to the hypothesis on the energy moments. More precisely, in view of \eqref{gron} and Lemma \ref{entropy}, we want to prove that
\begin{equation} N^2 \mu_t^N\left[ \sum_{y \in \T_N} \sum_{k \geqslant 3} \frac{F_y^k}{k! \ N^k}\right] = o(N), \text{ where }  F_y = \sum_{x\in \T_N}  \left(\delta_x(\p^y)-\delta_x(\p)\right)\ . \end{equation}

Let us compute $F_y$. We notice that in the following expression, \begin{equation} \sum_{x\in \T_N} -\beta'_t\left(\frac{x}{N}\right) \frac{r_x}{2\gamma}(p_{x+1}+p_x+\frac{\gamma}{2}r_x) - \lambda'_t\left(\frac{x}{N}\right) \frac{p_{x+1}}{\gamma}\ , \end{equation} the only terms which are changing when we flip $\p$ into $\p^y$ are \begin{itemize} \item the term when $x=y$, and the difference is \begin{equation} \frac{r_yp_y}{\gamma} \beta'_t\left(\frac{y}{N}\right)\ ,\end{equation}
\item the term when $x=y-1$,  and the difference is \begin{equation}\frac{r_{y-1}p_y}{\gamma} \beta'_t\left(\frac{y-1}{N}\right)+ \lambda'_t\left(\frac{y-1}{N}\right) \frac{2 p_{y}}{\gamma}\ .\end{equation}
\end{itemize}

In other words, we have to show that \begin{equation}N \  \mu_t^N \left[ \sum_{y \in \T_N} \sum_{k \geqslant 3} \frac{ \vert F_y \vert ^k}{k! \ N^k} \right] \xrightarrow[N \to \infty]{} 0\ .\end{equation}
with \begin{align*} \vert F_y(t) \vert  & = \left\vert \frac{r_yp_y}{\gamma} \beta'_t\left(\frac{y}{N}\right)+\frac{r_{y-1}p_y}{\gamma} \beta'_t\left(\frac{y-1}{N}\right)+  \lambda'_t\left(\frac{y-1}{N}\right) \frac{2 p_{y}}{\gamma}\right\vert \\
& \leqslant C_0 \ \vert r_y p_y \vert + C_1 \ \vert r_{y-1} p_y \vert + C_2\  \vert p_y\vert \\
& \leqslant C_0 \ \frac{r_y^2 + p_y^2}{2} + C_1 \ \frac{r_{y-1}^2+p_y^2}{2} + C_2\ (1+p_y^2) \\
& \leqslant K \ (1+e_y + e_{y-1})\ , \end{align*} 
where $K$ is a constant which does not depend on $N$ and $t$.

First of all, we introduce the space $A_y=\{{e}_y \leqslant 1,\ {e}_{y-1} \leqslant 1\}$.
\begin{align} N \sum_{y \in \T_N} \mu_t^N\left[\sum_{k \geqslant 3} \frac{({e}_y+{e}_{y-1}+1)^k \ K^k\ \mathds{1}_{\{{e}_y \leqslant 1, \ {e}_{y-1} \leqslant 1\}}}{k! \ N^k}\right] & \leqslant N \sum_{y \in \T_N}\sum_{k \geqslant 3} \frac{(3K)^k}{k! \ N^k}  \notag\\
& = N^2 \ \sum_{k \geqslant 3} \frac{(3K)^k}{k! \ N^k}  \xrightarrow[N \to \infty]{} 0\end{align}

Since we have $({e}_y+{e}_{y-1})^k\mathds{1}_{A^C_y} \leqslant (2{e}_y+{e}_{y-1})^k$,  we deduce  $({e}_y+{e}_{y-1})^k\mathds{1}_{A^C_y} \leqslant C_0^k \ {e}_y^k+ C_1^k {e}_{y-1}^k\ .$
Consequently, \begin{equation} N \sum_{y \in \T_N} \mu_t^N\left[\sum_{k \geqslant 3} \frac{|F_y|^k \ K^k\ \mathds{1}_{A_y^C}}{k! \ N^k}\right] \leqslant N \sum_{y \in \T_N} \mu_t^N\left[\sum_{k \geqslant 3} \frac{{e}_y^k \ K'^k}{k! \ N^k}\right] +  N \sum_{y \in \T_N} \mu_t^N\left[\sum_{k \geqslant 3} \frac{{e}_{y-1}^k \ K'^k}{k! \ N^k}\right]. \end{equation} 
Now we deal with $N \sum_{y \in \T_N} \mu_t^N\left[\sum_{k \geqslant 3} {{e}_y^k}/({k! N^k})\right]$. Remind that $ \ds {e}_y^k \leqslant 2 \ ( p_y^{2k} +  r_y^{2k}). $

We are reduced to prove that 
 \begin{equation}N\sum_{y \in \T_N} \mu_t^N\left[\sum_{k \geqslant 3} \frac{p_y^{2k}}{k! \ N^k}\right] \xrightarrow[N \to \infty]{} 0 \quad \text{ and } \quad N\sum_{y \in \T_N} \mu_t^N\left[\sum_{k \geqslant 3} \frac{ r_y^{2k}}{k! \ N^k}\right] \xrightarrow[N \to \infty]{} 0\ . \end{equation}
We can flip the summations thanks to Fubini theorem. From the hypothesis on the moments bounds we get 
\begin{equation}N\sum_{y \in \T_N} \mu_t^N\left[\sum_{k \geqslant 3} \frac{p_y^{2k}}{k! \ N^k}\right]  \leqslant N^2\sum_{k \geqslant 3} \frac{(C\ k)^k}{k!\ N^k}  \xrightarrow[N \to \infty]{} 0\ .\end{equation}
{This last limit is deduced from the property of the series  $S(x):=\sum_{k \geqslant 3} k^k \ x^{k-2} / ({k!}) \ $. It is a power series which has a strictly positive radius and is continuous at 0. Then,}\begin{equation}N^2\sum_{k \geqslant 3} \frac{(C\ k)^k}{k!\ N^k} =C^2 S\left(\frac C N\right) \xrightarrow[N \to \infty]{} 0\ .\end{equation}The same happens for the second sum. It follows that \begin{equation}N \sum_{y \in \T_N} \mu_t^N\left[\sum_{k \geqslant 3} \frac{F_y^k}{k! \ N^k}\right] \xrightarrow[N \to \infty]{} 0\ .\end{equation} \end{proof}
After adding the two terms and get some simplifications, we obtain this following final result.

\begin{prop}\label{prop:antisym}
\begin{align} \frac{1}{\phi_t^N} N^2 \L_N^* \phi_t^N = & \sum_{x \in \T_N} \left\{-\d^2_q \beta\left(t,\frac{x}{N}\right) \left[\frac{p_{x+1}^2+r_{x+1}r_x}{2\gamma} +p_{x+1}r_x\right]- \d^2_q \lambda\left(t,\frac{x}{N}\right)\left[ \frac{r_{x+1}}{\gamma} +p_{x+1}\right] \right\} \notag \\ 
& + \frac{1}{4\gamma} \sum_{x \in \T_N} p_x^2\left[r_x \d_q \beta\left(t,\frac{x}{N}\right) + r_{x-1} \d_q \beta\left(t,\frac{x-1}{N}\right)  +2 \d_q \lambda\left(t,\frac{x-1}{N}\right)\right]^2 + o(N)\ . \label{part1} \end{align} 
\end{prop}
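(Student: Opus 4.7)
The plan is to combine the two preceding lemmas (the expansion of $\A_N\phi_t^N$ and Lemma \ref{lem:sym} for $\S_N\phi_t^N$) through the identity $\L_N^* = -\A_N + \gamma \S_N$, exploit an exact cancellation of the two first-order contributions involving $\delta_x$, and then expand the residual quadratic term in closed form.

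\emph{Step 1: assembling the two lemmas.} Using $\L_N^* = -\A_N + \gamma \S_N$, the two preceding results together give
\begin{multline*}
\frac{N^2 \L_N^*\phi_t^N}{\phi_t^N} = -\sum_x\Bigl\{\beta''_t(x/N)\Bigl[\tfrac{p_{x+1}^2+r_{x+1}r_x}{2\gamma}+p_{x+1}r_x\Bigr]+\lambda''_t(x/N)\Bigl[\tfrac{r_{x+1}}{\gamma}+p_{x+1}\Bigr]\Bigr\} \\
 - N\sum_x (\A_N+\L_N^*)(\delta_x) + \gamma N\sum_x \S_N(\delta_x) + \frac{\gamma}{4}\sum_y\Bigl(\sum_x (\delta_x(\p^y)-\delta_x(\p))\Bigr)^2 + o(N),
\end{multline*}
the final error being $o(N)$ in $\mu_t^N$-expectation, which is what Lemma \ref{entropy} requires. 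The crucial algebraic observation is that $\A_N+\L_N^* = \gamma\S_N$, so the two middle sums cancel exactly. This cancellation is precisely the reason why the correction $\tau_x h$ in \eqref{fonc} was built from the fluctuation-dissipation equations \eqref{diff1}-\eqref{diff2}: the $\delta_x$ are designed so that the first-order contributions from $-\A_N\phi_t^N$ and $\gamma\S_N\phi_t^N$ annihilate each other, leaving only the diffusive second-order residue.

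\emph{Step 2: evaluating the quadratic term.} Using
$$\delta_x = -\beta'_t(x/N)\,\tfrac{r_x}{2\gamma}\bigl(p_{x+1}+p_x+\tfrac{\gamma}{2}r_x\bigr) - \lambda'_t(x/N)\,\tfrac{p_{x+1}}{\gamma},$$
the variable $p_y$ appears only in the terms indexed by $x=y$ (through the factor $p_x$ in $\delta_y$) and $x=y-1$ (through $p_{x+1}$ and $p_{x+1}/\gamma$ in $\delta_{y-1}$). A direct flip-by-flip computation yields
$$\sum_x\bigl(\delta_x(\p^y)-\delta_x(\p)\bigr) = \frac{p_y}{\gamma}\Bigl[\beta'_t(y/N)\,r_y + \beta'_t((y-1)/N)\,r_{y-1} + 2\lambda'_t((y-1)/N)\Bigr].$$
Squaring, multiplying by $\gamma/4$ and relabeling $y\to x$ produces exactly the second sum of the statement with prefactor $1/(4\gamma)$.

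\emph{Main obstacle.} The algebra above is short; the genuine difficulty is the control of the two $o(N)$ remainders. The error in the expansion of $\A_N\phi_t^N$ involves only discrete third-order differences of $\beta_t$ and $\lambda_t$, which are harmless by the parabolic smoothness of \eqref{hydro}. The error in Lemma \ref{lem:sym} is much more delicate: it is the cubic-and-higher Taylor remainder of $e^u-1$ applied to $u=N^{-1}\sum_x(\delta_x(\p^y)-\delta_x(\p))$, and therefore involves arbitrary powers $r_y^a p_y^b$. Showing that it is $o(N)$ in $\mu_t^N$-expectation is exactly where the uniform moment bound \eqref{mom} supplied by Theorem \ref{theo:moments} becomes indispensable.
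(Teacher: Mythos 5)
Your proof is correct and takes essentially the same route as the paper: the key point there is likewise the exact cancellation $\tfrac{\phi_t^N}{N}\sum_x\{-\A(\delta_x)+\gamma\S(\delta_x)-\L^*(\delta_x)\}=0$ coming from $\L_N^*=-\A_N+\gamma\S_N$, and your flip-by-flip evaluation of $\sum_x(\delta_x(\p^y)-\delta_x(\p))$ reproduces the computation already carried out in the proof of Lemma \ref{lem:sym}. Your closing remark correctly locates the real work in the $o(N)$ remainders, which is where the moment bounds \eqref{mom} enter in the two preceding lemmas.
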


\begin{proof}  There are simplifications when we write  $(-\A + \gamma \S)(\phi_t^N)$. Actually,  \begin{equation}\frac{\phi_t^N}{N} \sum_{x \in \T_N} \{-\A(\delta_x)+\gamma \S(\delta_x) -\L^*(\delta_x) \} =0\ .\end{equation} The result follows. \end{proof}

\subsection{Second Term: Logarithmic Derivative }\label{ss:log}

First, we notice  that $ {\d_t \phi_t^N}/{\phi_t^N}=\d_t\{\log(\phi_t^N)\}.$ Moreover, \begin{align}
\log(\phi_t^N)=C + \sum_{x \in \T_N} & e_x\left(-\beta_t\left(\frac{x}{N}\right)+1\right)-\lambda_t\left(\frac{x}{N}\right)r_x -\beta'_t\left(\frac{x}{N}\right) \frac{r_x}{2\gamma N}(p_{x+1}+p_x+\frac{\gamma}{2}r_x) \notag \\
&  + \lambda'_t\left(\frac{x}{N}\right) \frac{p_{x}}{\gamma N} -\log\left[Z\left(\beta_t\left( \cdot \right),\lambda_t\left( \cdot \right)\right)\right]\ . \end{align}
We need to estimate the partition function $Z(\beta_t(\cdot),\lambda_t(\cdot))$. More precisely, we compare this new partition function to the exact partition function 
\begin{equation}\tilde{Z}(\beta_t(\cdot),\lambda_t(\cdot))=\prod_{x \in \T_N} \frac{2\pi}{\beta_t(x/N)} \exp\left(\frac{\lambda_t^2(x/N)}{2 \beta_t(x/N)}\right).\end{equation}
We prove the following lemma.

\begin{lem} \begin{equation}\left\vert\d_t \log Z(\beta_t(\cdot),\lambda_t(\cdot))-\d_t \log \tilde{Z}(\beta_t(\cdot),\lambda_t(\cdot))\right\vert = O(1) \quad \text{when } N \to \infty.\end{equation}
\end{lem}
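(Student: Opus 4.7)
The strategy is to differentiate each partition function directly under the integral sign and compare the resulting expectations, which turn out to be very similar. Starting from the definition of $Z(\chi_t(\cdot))$ and differentiating under the integral,
\begin{equation*}
\d_t\log Z = -\sum_{x\in\T_N} \d_t\beta_t(x/N)\,\nu^N_{\chi_t(\cdot)}[e_x] - \sum_x \d_t\lambda_t(x/N)\,\nu^N_{\chi_t(\cdot)}[r_x] + \frac{1}{N}\sum_x \d_t F(t,x/N)\cdot\nu^N_{\chi_t(\cdot)}[\tau_x h],
\end{equation*}
and the analogous identity for $\tilde Z$, with $\nu^N_{\chi_t(\cdot)}$ replaced by the product Gibbs measure $\mu^N_{\chi_t(\cdot)}$ and the last sum absent. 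Subtracting yields $\d_t\log Z - \d_t\log\tilde Z = I_N + II_N$, with
\begin{equation*}
I_N := \frac{1}{N}\sum_x \d_t F(t,x/N)\cdot \nu^N_{\chi_t(\cdot)}[\tau_x h],\qquad II_N := -\sum_x \bigl\{ \d_t\beta_t(x/N)\,\Delta_x[e] + \d_t\lambda_t(x/N)\,\Delta_x[r]\bigr\},
\end{equation*}
where $\Delta_x[f] := \nu^N_{\chi_t(\cdot)}[f] - \mu^N_{\chi_t(\cdot)}[f]$.

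The term $I_N$ is immediate: smoothness of the profiles gives $\d_t F = O(1)$ uniformly; since $\tau_x h$ is a quadratic polynomial in the local variables $(r_x, p_x, p_{x+1})$ and \eqref{eq:beta} guarantees uniform Gaussian integrability, $\nu^N_{\chi_t(\cdot)}[\tau_x h]$ is bounded uniformly in $x$; the $1/N$ prefactor therefore turns the sum of $N$ such terms into an $O(1)$ quantity.

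The content of the lemma is the bound $|II_N| = O(1)$, which reduces to showing $\Delta_x[e_x] = O(1/N)$ and $\Delta_x[r_x] = O(1/N)$ uniformly in $x\in\T_N$. I would derive this from the covariance representation
\begin{equation*}
\Delta_x[f] = \frac{\mu^N_{\chi_t(\cdot)}\bigl[(f-\mu^N_{\chi_t(\cdot)}[f])(e^{T_N}-\mu^N_{\chi_t(\cdot)}[e^{T_N}])\bigr]}{\mu^N_{\chi_t(\cdot)}[e^{T_N}]},\qquad T_N := \frac{1}{N}\sum_x F(t,x/N)\cdot \tau_x h,
\end{equation*}
combined with a Taylor expansion $e^{T_N} = 1 + T_N + R_N$. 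Since $\mu^N_{\chi_t(\cdot)}$ is a product measure and $\tau_y h$ depends only on $(r_y, p_y, p_{y+1})$, only the terms $y\in\{x-1,x\}$ in $T_N$ are correlated with $f\in\{e_x,r_x\}$, and each such contribution is $O(1/N)\cdot\mathrm{Cov}_{\mu^N}(f,\tau_y h) = O(1/N)$.

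The main obstacle is then ensuring that the denominator $\mu^N_{\chi_t(\cdot)}[e^{T_N}]$ stays uniformly bounded away from $0$ and from $\infty$, and that the remainder $R_N$ contributes $o(1/N)$ to the numerator. Both points follow because $T_N$ is a quadratic-plus-linear form in the Gaussians with coefficients of order $1/N$ and because \eqref{eq:beta} provides a strictly positive uniform lower bound on $\beta_t$: one obtains either by explicit Gaussian integration (yielding a closed form $Z/\tilde{Z} = \det(I - N^{-1}K_N)^{-1/2}\exp(\cdots)$ from which the $O(1)$ bound on $\d_t\log(Z/\tilde Z)$ can be read off directly), or by Jensen's inequality together with the uniform $L^k$ estimates on $T_N$ coming from the same Gaussian structure.
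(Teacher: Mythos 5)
Your proof is correct, but it takes a genuinely different route from the paper's. The paper evaluates $Z(\chi_t(\cdot))$ in closed form as a Gaussian integral, $\exp(\Vert b_t\Vert^2/2)\,(2\pi)^N\vert\det C_t\vert^{1/2}$ with $C_t=D_t+N^{-1}H_t$ and $H_t$ banded, and then controls $\d_t\log\det(C_tD_t^{-1})$ through a Neumann-series expansion of $(I+N^{-1}K_t)^{-1}$ together with the trace bound $\vert\text{Tr}(K_t^k)\vert\leqslant N3^kC$ coming from the band structure. You instead differentiate $\log Z$ under the integral sign and reduce the lemma to the statement that the one-site expectations of $e_x$ and $r_x$ under the corrected measure $\nu^N_{\chi_t(\cdot)}$ and under the Gibbs measure $\mu^N_{\chi_t(\cdot)}$ differ by $O(1/N)$ uniformly in $x$; this follows from your covariance representation combined with the locality of $\tau_x h$ and the product structure of $\mu^N_{\chi_t(\cdot)}$, so that only the blocks $y\in\{x-1,x\}$ of $T_N$ correlate with site $x$. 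Your argument is more probabilistic and arguably more robust: it uses the Gaussian structure only through moment and exponential-moment bounds, which the uniform lower bound \eqref{eq:beta} on $\beta_t$ supplies, whereas the paper's computation is fully explicit but tied to the quadratic Hamiltonian. One small correction to your bookkeeping: since $T_N$ concentrates around the $O(1)$ constant $c_N:=\mu^N_{\chi_t(\cdot)}[T_N]$ rather than around $0$, the remainder $R_N=e^{T_N}-1-T_N$ contributes $O(1/N)$ to the covariance with $e_x$ (its expansion around $c_N$ contains the term $(e^{c_N}-1)(T_N-c_N)$, whose covariance with $e_x$ is of the same order as that of $T_N$ itself), not $o(1/N)$ as you claim; this is harmless because $O(1/N)$ per site is exactly what the lemma requires, but the cleaner version of your argument centers $T_N$ first and Taylor-expands $e^{T_N-c_N}$, controlling the quadratic remainder by Cauchy--Schwarz, $\mu^N_{\chi_t(\cdot)}[(T_N-c_N)^4]=O(N^{-2})$, and the uniform exponential moments.
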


\begin{proof} First of all, remind that the exact expression of $Z_t:=Z(\beta_t(\cdot),\lambda_t(\cdot))$ can be written as \begin{multline}Z_t  =\int_{\R^{2N}} \left[\prod_{x \in \T_N} \exp\left\{-\beta_t\left(\frac{x}{N}\right) e_x - \lambda_t\left(\frac{x}{N}\right) r_x \right. \right. \\ \left. \left. - \frac{1}{N} \beta'_t\left(\frac{x}{N}\right) \frac{r_x}{2\gamma}\left(p_{x+1}+p_x+\frac{\gamma}{2}r_x\right)-\frac{1}{N} \lambda'_t\left(\frac{x}{N}\right)\frac{p_{x+1}}{\gamma}\right\}\right] d\p d\r \\ = \exp\left\{\frac{1}{2} \Vert b_t \Vert ^2 \right\} \int_{\R^{2N}} \exp\left\{-\frac{1}{2} \langle X-b_t, C_t(X-b_t)\rangle\right\} dX = \exp\left\{\frac{1}{2} \Vert b_t \Vert^2 \right\} (2\pi)^N \vert \det(C_t) \vert^{1/2}. \end{multline}

where $b_t$ is a vector and $C_t$ is a symmetric positive matrix.

More precisely, one can see that \begin{equation} \Vert b_t \Vert^2= \sum_{x \in \T_N} \frac{\lambda_t^2}{\beta_t}\left(\frac{x}{N}\right) + \frac{1}{N} \sum_{x \in \T_N} h_t\left(\frac{x}{N}\right) \end{equation} where $h_t$ is a function that can be easily expressed with $\lambda_t, \beta_t, \lambda'_t$ and $\beta'_t$. Then, $h_t$ is smooth. 

Moreover, $C_t$ can be written as $C_t=D_t + N^{-1} H_t$ with $D_t$ a diagonal matrix and $H_t$ a symmetric matrix which has at most three non-zero { components} on each row and each column. More precisely, 
\begin{equation}D_t= \begin{pmatrix} 
\ddots & &  (0)  \\
 & \beta_t(x/N) & \\
(0) &  & \ddots  \end{pmatrix},\end{equation}
 \begin{equation} H_t= \begin{pmatrix} \begin{pmatrix} 
\ddots  & &  (0)  \\          
 &-(1/4)\beta'_t(x/N) &                         \\               
(0) &  & \ddots &               \end{pmatrix}       &  \begin{pmatrix}     
                                                                                      \ddots & -(2\gamma)^{-1} \beta'_t(x/N) & (0) \\
                                                                                     & -(2\gamma)^{-1} \beta'_t(x/N) & \ddots \\
                                                                                    (0) &  & \ddots \\ \end{pmatrix} \\
  \begin{pmatrix}   
   \ddots& & (0) \\                                                                               
 \ddots &   -(2\gamma)^{-1} \beta'_t(x/N) & \\                                                                                 
    (0) &   -(2\gamma)^{-1} \beta'_t(x/N) & \ddots \end{pmatrix} & \begin{pmatrix}      0  \end{pmatrix}    
                                                                                \end{pmatrix}
                                                                                    \end{equation}

Now we write \begin{align} \d_t \log Z_t & = \frac{1}{2} \sum_x \d_t \left(\frac{\lambda_t^2}{\beta_t}\left(\frac{x}{N} \right)\right) + \frac{1}{2} \d_t \log \det(C_t) + \frac{1}{N} \sum_x \d_t h_t\left(\frac{x}{N} \right), \\
\d_t \log \tilde{Z}_t & = \frac{1}{2} \sum_x \d_t \left(\frac{\lambda_t^2}{\beta_t}\left(\frac{x}{N} \right)\right) + \frac{1}{2} \d_t \log \det(D_t)\ . \end{align}
But, $ \left\vert {N}^{-2} \sum_x \d_t h_t\left(x/N \right) \right\vert = O(1)$ since $h_t$ is smooth.

It remains to show that the following quantity is bounded above by a constant that does not depend on $N$: 
\begin{equation}\left\vert \d_t \left(\log \frac{\det C_t}{\det D_t}\right) \right\vert = \left\vert \d_t \left[\log \det\left(I + \frac{1}{N} D_t^{-1} H_t\right) \right]\right\vert= \left\vert \frac{\d_t \{ \det (I+ D_t^{-1} H_t /N) \}}{\det (I+D_t^{-1} H_t /N)} \right\vert\ .\end{equation}
We denote by $K_t$ the matrix $D_t^{-1} H_t$, which also has at most three non-zero { components} on each row and each column, and by $K'_t$ the derivative of $K_t$ with respect to $t$. We notice that for $N$ { large} enough, the matrix $I + K_t/N$ is invertible, and we have \begin{equation}\left\vert \d_t \left(\log \frac{\det C_t}{\det D_t}\right) \right\vert = \left\vert\frac{\text{Tr}(\  \!^t\text{com}(I+K_t/N) \cdot (I+K'_t/N))}{\det (I+K_t/N)} \right\vert = \left\vert \text{Tr}\left[ \left(I+\frac{1}{N} K_t\right)^{-1} \left(I+K'_t\right) \right]  \right\vert\ ,\end{equation} where $\text{com}(A)$ is the comatrix of $A$.

Now we deal with $(I+K_t/N)^{-1}$: \begin{equation}\left(I+\frac{1}{N} K_t\right)^{-1} = I - K_t + \sum_{k \geqslant 2} \frac{(-1)^k}{N^k} K_t^k\ . \end{equation}
But, the {component} $(i,j)$ of $K_t^k$ can be written as $ \sum_{i_1,...,i_k} a_{i,i_1} \ a_{i_1,i_2} \cdots\ a_{i_k,j}$ where $a_{i,j}$ are the { components} of $K_t$. We know that there are at most three non-zero { components} on each row and each column, and that they are all bounded by a constant $C$ that does not depend on $N$ (since $\beta_t$ and $\lambda_t$ are smooth). Then, it implies that $\vert \text{Tr}(K_t^k) \vert \leqslant N 3^k C.$

It follows that \begin{equation}\left\vert \text{Tr}\left[ \left(I+\frac{1}{N}K_t\right)^{-1}\right]\right\vert = \left\vert \text{Tr}\left( I-K_t + \sum_{k \geqslant 2} \frac{(-1)^k}{N^k} K_t^k \right)\right\vert \leqslant 1 + \vert \text{Tr}(K_t) \vert +C  \sum_{k \geqslant 2} \frac{3^k}{N^{k-1}} = O(1)\ ,\end{equation}
because $\text{Tr}(K_t) =O(1)$ (we can compute it and again use the smoothness of the profiles).

In the same way, we show that \begin{equation}\left\vert \text{Tr}\left[ K'_t  \left(I+\frac{1}{N}K_t\right)^{-1}\right]\right\vert =O(1)\ . \end{equation} It ends the proof. \end{proof}

We deduce from the previous result that \begin{equation} \d_t \log\left[Z\left(\beta_t\left(\cdot \right),\lambda_t\left(\cdot\right)\right)\right]  = \sum_{x \in \T_N} -\frac{\d_t \beta_t(x/N)}{\beta_t(x/N)}+\d_t \lambda_t(x/N) \frac{\lambda_t(x/N)}{\beta_t(x/N)}-\frac{\d_t \beta_t(x/N)}{2} \frac{\lambda_t^2(x/N)}{\beta_t^2(x/N)} + O(1)\ .\end{equation}
Consequently, we get the following statement.
 
\begin{prop}\label{prop:z}
\begin{align}
\d_t\{\log(\phi_t^N)\} = \sum_{x \in \T_N} & - e_x \d_t \beta\left(t,\frac{x}{N}\right) - r_x \d_t \lambda\left(t,\frac{x}{N}\right) - \frac{r_x}{2 \gamma N} \d_t \d_q \beta\left(t,\frac{x}{N}\right)\left(p_{x+1}+p_x+\frac{\gamma}{2}r_x\right) \notag \\
& - \frac{p_{x}}{\gamma N} \d_t \d_q \lambda\left(t,\frac{x}{N}\right) + \frac{\d_t \beta(t,x/N)}{\beta(t,x/N)} \notag \\
 & -\d_t \lambda(t,x/N) \frac{\lambda(t,x/N)}{\beta(t,x/N)}+\frac{\d_t \beta(t,x/N)}{2} \frac{\lambda^2(t,x/N)}{\beta^2(t,x/N)}, \\
 \notag \\
\d_t\{\log(\phi_t^N)\}  = \sum_{x \in \T_N} & -\left[e_x-\mathbf{e}\left(t,\frac{x}{N}\right)\right] \d_t \beta\left(t,\frac{x}{N}\right) + \left[r_x -\mathbf{r}\left(t,\frac{x}{N}\right)\right] \d_t \lambda\left(t,\frac{x}{N}\right) + O(1)\ . \end{align}
\end{prop}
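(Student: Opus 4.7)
The plan is to differentiate $\log(\phi_t^N)$ directly in $t$ and to control the $-\d_t\log Z(\chi_t(\cdot))$ piece via the preceding lemma. Starting from the explicit expression for $\log(\phi_t^N)$ recalled just before the proposition, I would differentiate term by term. The pieces linear in $e_x$ and $r_x$ produce $-e_x \d_t\beta_t(x/N)$ and $-r_x \d_t\lambda_t(x/N)$ immediately. Using the definition $F(t,q)=(-\d_q\beta_t(q),-\d_q\lambda_t(q))$ together with the explicit form of $\tau_x h$ in \eqref{fonc}, the $(1/N)F\cdot \tau_x h$ contribution differentiates into exactly the two gradient-type terms involving $\d_t\d_q\beta_t$ and $\d_t\d_q\lambda_t$ displayed in the first identity.

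For the partition function contribution, the preceding lemma allows me to replace $\d_t\log Z(\chi_t(\cdot))$ by $\d_t\log \tilde Z(\chi_t(\cdot))$ up to an additive $O(1)$. Since $\tilde Z(\chi_t(\cdot))$ is the product $\prod_x (2\pi/\beta_t(x/N))\exp(\lambda_t^2(x/N)/(2\beta_t(x/N)))$, a direct computation yields
\begin{equation*}
\d_t\log \tilde Z(\chi_t(\cdot))=\sum_x\Big[-\frac{\d_t\beta_t(x/N)}{\beta_t(x/N)}+\frac{\lambda_t(x/N)\d_t\lambda_t(x/N)}{\beta_t(x/N)}-\frac{\lambda_t^2(x/N)\d_t\beta_t(x/N)}{2\beta_t^2(x/N)}\Big],
\end{equation*}
and changing signs produces the three remaining terms of the first identity. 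Collecting everything proves the first equation.

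The second equation follows from one algebraic cancellation. Substituting the thermodynamic relations $\mathbf{e}=1/\beta+\lambda^2/(2\beta^2)$ and $\mathbf{r}=-\lambda/\beta$ one checks directly that $-\mathbf{e}(t,x/N)\d_t\beta_t(x/N)+\mathbf{r}(t,x/N)\d_t\lambda_t(x/N)$ is precisely the negative of the three partition-function terms above. Therefore, after adding and subtracting $\mathbf{e}\d_t\beta$ and $\mathbf{r}\d_t\lambda$ inside each summand, those three terms are exactly cancelled, and what remains from $-e_x\d_t\beta-r_x\d_t\lambda$ is its centered version. The $1/N$ gradient-type contributions from the first identity are sums over $N$ sites, weighted by $1/N$, of quantities like $r_x(p_{x+1}+p_x)$, $r_x^2$ or $p_x$; their $\mu_t^N$-expectations are uniformly bounded by the moments control of Theorem \ref{theo:moments} and by the smoothness of $\d_t\d_q\beta_t$, $\d_t\d_q\lambda_t$, so they are absorbed into the $O(1)$ remainder.

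I do not expect a substantive obstacle: the analytically delicate step, namely the control of the corrected partition function, has already been settled in the preceding lemma, and the rest is bookkeeping. The one point that requires care is the sign convention $F=(-\beta'_t,-\lambda'_t)$ together with the precise thermodynamic relations, since a sign slip in either place would destroy the exact cancellation that produces the centered form in the second identity.
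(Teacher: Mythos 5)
Your proposal follows the paper's own route exactly: termwise differentiation of $\log\phi_t^N$, replacement of $\d_t\log Z(\chi_t(\cdot))$ by $\d_t\log\tilde Z(\chi_t(\cdot))$ via the preceding lemma, explicit computation of $\d_t\log\tilde Z$, and recentering through the thermodynamic relations \eqref{rel2}. One sign slip: per site the correct identity is $\mathbf{e}\,\d_t\beta+\mathbf{r}\,\d_t\lambda=-\big(-\tfrac{\d_t\beta}{\beta}+\tfrac{\lambda\d_t\lambda}{\beta}-\tfrac{\lambda^2\d_t\beta}{2\beta^2}\big)$, not $-\mathbf{e}\,\d_t\beta+\mathbf{r}\,\d_t\lambda$; since your final centered form $-(e_x-\mathbf{e})\d_t\beta-(r_x-\mathbf{r})\d_t\lambda$ is precisely what the correct identity yields (and is what the paper actually uses afterwards in Appendix \ref{subsec:end}, the $+$ sign in the proposition's second display being a typo), the argument stands.
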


\subsection{Ending Proof}\label{subsec:end}

We are now able to prove the Taylor expansion. According to the results of the two previous parts, we have
\begin{align} \frac{1}{\phi_t^N} N^2 \L_N^* \phi_t^N -\d_t\{\log(\phi_t^N)\}  = & \sum_{x \in \T_N} \left\{-\d^2_q \beta\left(t,\frac{x}{N}\right) \left[\frac{p_{x}^2+r_{x-1}r_x}{2\gamma} +p_{x}r_{x-1}\right]-\d^2_q \lambda\left(t,\frac{x}{N}\right)\left[ \frac{r_{x}}{\gamma} +p_{x}\right] \right. \notag \\ 
& + \frac{p_x^2}{4\gamma}\left[(r_x+r_{x-1}) \d_q \beta\left(t,\frac{x}{N}\right) +2 \d_q \lambda\left(t,\frac{x}{N}\right)\right]^2 \notag \\
 & \left.+ \left[e_x-\mathbf{e}\left(t,\frac{x}{N}\right)\right] \d_t \beta\left(t,\frac{x}{N}\right) + \left[r_x -\mathbf{r}\left(t,\frac{x}{N}\right)\right] \d_t \lambda\left(t,\frac{x}{N}\right)\right\} + o(N) \ . \label{part1} \end{align} 

Using the notations introduced in Section \ref{sec:entropy}, it becomes: 
\begin{align} \frac{1}{\phi_t^N} N^2 \L_N^* \phi_t^N -\d_t\{\log(\phi_t^N)\}  = & \sum_{x \in \T_N} \left\{-\frac{1}{2\gamma} \d^2_q \beta\left(t,\frac{x}{N}\right) J_x^1-\frac{1}{\gamma} \d^2_q \lambda\left(t,\frac{x}{N}\right)  J_x^2 \right. \notag \\ 
& + \frac{1}{4\gamma}\left[\d_q \beta\left(t,\frac{x}{N}\right)\right]^2  J_x^3 + \frac{1}{\gamma}  \d_q \beta\left(t,\frac{x}{N}\right) \d_q \lambda\left(t,\frac{x}{N}\right) J_x^4 \notag \\
& +  \frac{1}{\gamma} \left[\d_q \lambda\left(t,\frac{x}{N}\right)\right]^2   J_x^5     \notag \\
 & \left.+ \left[e_x-\mathbf{e}\left(t,\frac{x}{N}\right)\right] \d_t \beta\left(t,\frac{x}{N}\right) + \left[r_x -\mathbf{r}\left(t,\frac{x}{N}\right)\right] \d_t \lambda\left(t,\frac{x}{N}\right)\right\} + o(N)\ . \end{align} 

We denote by $H_k$ the function defined as follows: \begin{equation}H_k\left(\eta\left(t,\frac{x}{N}\right)\right)=\mu^N_{\chi_t(x/N)}\left[J_0^k\right]. \end{equation} The explicit formulations for $H_k$ are given by Proposition \ref{prop}. The sum \begin{align}\sum_{x \in \T_N} & \left\{-\frac{1}{2\gamma} \right.  \d^2_q \beta\left(t,\frac{x}{N}\right)  H_1\left(\eta\left(t,\frac{x}{N}\right)\right)-\frac{1}{\gamma} \d^2_q \lambda\left(t,\frac{x}{N}\right)  H_2\left(\eta\left(t,\frac{x}{N}\right)\right) \notag \\
& + \frac{1}{4\gamma}\left[\d_q \beta\left(t,\frac{x}{N}\right)\right]^2  H_3\left(\eta\left(t,\frac{x}{N}\right)\right) + \frac{1}{\gamma}  \d_q \beta\left(t,\frac{x}{N}\right) \d_q \lambda\left(t,\frac{x}{N}\right)  H_4\left(\eta\left(t,\frac{x}{N}\right)\right) \notag \\
&  \left.  +  \frac{1}{\gamma} \left[\d_q \lambda\left(t,\frac{x}{N}\right)\right]^2   H_5\left(\eta\left(t,\frac{x}{N}\right)\right)\right\}   \end{align} is of order $o(N)$ (thanks to the regularity of the functions $\mathbf{e}, \mathbf{r}, \beta, \lambda$), so that we can introduce it in the right member of the equality \eqref{part1}.

Then, we obtain after computations
\begin{equation}-\frac{ \d^2_q \beta}{2\gamma} \ \d_{\mathbf{e}} H_1 - \frac{ \d^2_q \lambda}{\gamma} \ \d_{\mathbf{e}} H_2  + \frac{ \left[\d_q\beta \right]^2}{4\gamma} \ \d_{\mathbf{e}} H_3 + \frac{ \d_q\beta \d_q\lambda}{\gamma} \  \d_{\mathbf{e}} H_4  + \frac{\left[\d_q\lambda \right]^2}{\gamma}  \\d_{\mathbf{e}} H_5  = -\d_t \beta\ ,\end{equation} 
and
\begin{equation}-\frac{ \d^2_q \beta}{2\gamma} \  \d_{\mathbf{r}} H_1 - \frac{ \d^2_q \lambda}{\gamma} \ \d_{\mathbf{r}} H_2 + \frac{ \left[\d_q\beta \right]^2}{4\gamma} \ \d_{\mathbf{r}} H_3 + \frac{ \d_q\beta \d_q\lambda}{\gamma} \  \d_{\mathbf{r}} H_4  + \frac{\left[\d_q\lambda \right]^2}{\gamma}  \ \d_{\mathbf{r}} H_5  = \d_t \lambda\ .\end{equation}
Indeed, these two quantities are respectively equal to
\begin{equation}\frac{ \d^2_q \beta}{2\gamma}  - \frac{ [\d_q \beta]^2}{\gamma} \ \left(\mathbf{e}+\frac{\mathbf{r}^2}{2}\right)  - 2\mathbf{r} \ \frac{ \d_q\beta \d_q\lambda}{\gamma}  - \frac{\left[\d_q\lambda \right]^2}{\gamma}\ , \end{equation}
and
\begin{equation} \frac{ \d^2_q \beta}{2\gamma} \ \mathbf{r}+ \frac{ \d^2_q \lambda}{\gamma}- \frac{ \left[\d_q\beta \right]^2}{2\gamma} \ \mathbf{r} \ (2\mathbf{e}-3\mathbf{r}^2) - \frac{ \d_q\beta \d_q\lambda}{\gamma} \ (2\mathbf{e}-3\mathbf{r}^2) + \mathbf{r }\frac{\left[\d_q\lambda \right]^2}{\gamma}\ .  \end{equation}
This concludes the proof and gives Proposition \ref{prop}.

\section{Proof of the One-block Estimate}\label{appb}

We just give a sketch of the proof, which is done in \cite{BOmanus}, { Section 3.4}. First, we define the space time average of distribution: \begin{equation}\bar{f}^N = \frac{1}{tN} \sum_{i=1}^N \int_0^t \tau_i f_s^N ds\ ,\end{equation}
and $\bar{f}_k^N$ its projection on $\{ (r_i,p_i) \in \R^{2(k+1)} \ ; \ i \in \Lambda_k:= \{ -[k/2]-1,…, [k/2]+1\} \}$.

We also denote $ d\nu^N=\bar{f}^N \prod_{i \in \T_N} dr_idp_i$ and $d\nu_k^N=\bar{f_k}^N \prod_{i \in \T_N} dr_idp_i $ the corresponding probability measures on $\R^{2N}$ and $\R^{2(k+1)}$.

Observe first that \eqref{oneb} can be rewritten as  \begin{equation}t \limsup_{M \to \infty} \limsup_{\ell \to \infty} \limsup_{N \to \infty} \int \left\{ \left\vert \frac{1}{\ell} \sum_{i \in \Lambda_{\ell}(0)} J_{i,M}-H({\eta}_{\ell,M}(0))\right\vert \right\} d\nu^N=0\ , \end{equation} 
 because \begin{equation}\frac{1}{\ell} \sum_{k=0}^{\ell-1} \frac{1}{p} \sum_{j=1}^p\tau_{x_j +k} = \frac{1}{N} \sum_{x=1}^N \tau_x.\end{equation}
  We can prove the first following lemma.
 
 \begin{lem} \label{lem:tight} For each fixed $k$, the sequence of probability measures $(\nu_k^N)_{N \geqslant k}$ is tight. \end{lem}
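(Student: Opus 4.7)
The plan is to reduce tightness of $(\nu_k^N)_{N\geqslant k}$ on the finite-dimensional space $\R^{2(k+1)}$ to a uniform second moment bound on its coordinates. By Markov's inequality it is enough to exhibit a constant $M_k$, independent of $N$, such that
$$\int \sum_{j\in \Lambda_k} (r_j^2+p_j^2) \, d\nu_k^N \leqslant M_k\ ;$$
this produces, for each $\varepsilon>0$, a closed ball $B_R\subset \R^{2(k+1)}$ whose $\nu_k^N$-mass is at least $1-\varepsilon$ uniformly in $N\geqslant k$, which is exactly tightness.

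To obtain such a bound I would exploit the fact that $\bar f^N$ averages both in space and in time. For any nonnegative local function $\varphi$ depending only on coordinates in $\Lambda_k$, the definition of $\bar f^N$ yields
$$\int \varphi \, d\nu_k^N = \frac{1}{tN}\sum_{i=1}^N \int_0^t \mu_s^N[\tau_i\varphi] \, ds.$$
Specializing to $\varphi=\sum_{j\in \Lambda_k} e_j$ and using translation invariance on the torus to reorganize the sum over $i\in \T_N$, the right-hand side collapses to
$$\frac{|\Lambda_k|}{t}\int_0^t \mu_s^N\!\left[\frac{1}{N}\sum_{x\in\T_N} e_x\right] ds.$$

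The hypothesis \eqref{mom}, applied at the first moment, gives $\mu_s^N[\sum_{x\in\T_N} e_x]\leqslant C N$ uniformly in $s\in[0,T]$ and in $N$, so the previous quantity is bounded by $C\,|\Lambda_k|$. Since $e_j=(p_j^2+r_j^2)/2$, this is exactly the uniform bound on second moments of the coordinates that we needed, and tightness follows.

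I do not expect any serious obstacle: the argument is essentially routine once the uniform moment bound \eqref{mom} is available. The one step that genuinely uses the structure of $\bar f^N$ (rather than of a single $\mu_t^N$) is the spatial averaging — the hypothesis \eqref{mom} only controls the \emph{total} energy on the torus, so without summing over $i\in\T_N$ there would be no obvious way to bound the second moment of the pair $(r_j,p_j)$ at an individual site $j$.
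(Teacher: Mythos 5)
Your argument is correct: the paper itself omits the proof of this lemma (deferring to \cite{BOmanus}), and your reduction of tightness to a uniform second-moment bound, obtained by unfolding the space--time average in $\bar f^N$ so that the sum over shifts collapses to $|\Lambda_k|\,N^{-1}\sum_{x\in\T_N}e_x$ and then invoking \eqref{mom} with $k=1$, is exactly the standard route. Your closing remark is also on point -- the spatial averaging in $\bar f^N$ is what converts the global energy bound \eqref{mom} into a per-window bound, so no step is missing.
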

 
 For any $k$ let $\nu_k$ be a limit point of the sequence $(\nu_k^N)_{N \geqslant 1}$. The sequence of probability measures $(\nu_k)_{k \geqslant 1}$ forms a consistent family and by Kolmogorov's theorem there exists a unique probability measure $\nu$ on $(\R \times \R)^{\Z}$ such that the restriction of $\nu$ on $\{ (r_i,p_i) \in \R^{2(k+1)} \ ; \ i \in \Lambda_k\}$ is $\nu_k$. One has easily that $\nu$ is invariant by translations. 
 
 \begin{lem} \label{lem:invariant} For any bounded smooth local function $F(\r,\p)$, we have $\ds \int \L F d\nu =0.$
 
 \end{lem}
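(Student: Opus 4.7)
The idea is to combine the fact that $\nu$ is a space-time average of the evolving law with the diffusive nature of the dynamics, which makes the time integral of $\L_N G$ against $\mu_s^N$ of order $1/N^2$ for every bounded local observable $G$. Fix a bounded smooth local $F$ with support in $\Lambda_{k_0}$; then $\L F$ is a continuous local function supported in $\Lambda_{k_0+1}$, and for every $k \geq k_0+1$ and $N$ large enough, $\int \L F \, d\nu_k^N = \int \L F \, d\nu^N$, while $\L F = \L_N F$ pointwise on $\T_N$.

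Using that shifts preserve Lebesgue measure and commute with $\L$, I rewrite
\begin{equation*}
\int \L F \, d\nu^N = \frac{1}{tN} \sum_{i=1}^N \int_0^t \int \L_N(\tau_i F) \, d\mu_s^N \, ds.
\end{equation*}
The Fokker-Planck equation $\d_s f_s^N = N^2 \L_N^* f_s^N$ gives, for each $i$,
\begin{equation*}
\int_0^t \int \L_N(\tau_i F) \, d\mu_s^N \, ds = \frac{1}{N^2}\left[\int \tau_i F \, d\mu_t^N - \int \tau_i F \, d\mu_0^N\right],
\end{equation*}
and since $|\tau_i F| \leq \|F\|_\infty$ each summand is bounded by $2\|F\|_\infty / N^2$. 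Summing over $i$,
\begin{equation*}
\left|\int \L F \, d\nu^N\right| \leq \frac{2 \|F\|_\infty}{tN^2} \xrightarrow[N \to \infty]{} 0.
\end{equation*}

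It remains to pass this vanishing through the weak limit $\nu_{k_0+1}^N \to \nu_{k_0+1}$ furnished by Lemma \ref{lem:tight}. This is the step that requires care, since $\L F$ is not bounded: from the expression of $\A F$ one has $|\L F| \leq C_F (1 + \sum_{x \in \Lambda_{k_0+1}}(|p_x| + |r_x|))$, the unboundedness coming from the coefficients $p_{x+1}-p_x$ and $r_x - r_{x-1}$ in $\A F$. The standard remedy is truncation: for $R > 0$, write $\L F = (\L F)\mathds{1}_{|\L F|\leq R} + (\L F)\mathds{1}_{|\L F|>R}$; the bounded continuous piece passes to the limit under weak convergence $\nu_{k_0+1}^N \to \nu_{k_0+1}$, while the tail piece is controlled uniformly in $N$ by the energy moment bound \eqref{mom} through Cauchy-Schwarz and Markov's inequality, and vanishes as $R \to \infty$. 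Combining with the computation above yields $\int \L F \, d\nu = \int \L F \, d\nu_{k_0+1} = 0$. The main obstacle, then, is precisely the uniform integrability of $\L F$ across the family $(\nu_k^N)_N$; without the energy moment bounds proved in Theorem \ref{theo:moments}, the integral $\int \L F \, d\nu$ might even fail to be well-defined.
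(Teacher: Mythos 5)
Your argument is correct and is essentially the standard proof that the paper defers to \cite{BOmanus}: the Dynkin/Fokker--Planck identity makes $\int \L F\, d\nu^N$ of order $N^{-2}$, and the passage to the limit uses uniform integrability of $\L F$ supplied by the energy bound \eqref{mom} (already the case $k=1$ suffices for this step). The only cosmetic point is that $(\L F)\mathds{1}_{\vert \L F\vert \leqslant R}$ is not continuous, so the truncation should be performed with a continuous cut-off $\phi_R(\L F)$ before invoking the weak convergence $\nu_{k_0+1}^N \to \nu_{k_0+1}$.
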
 Then, $\nu$ is a convex combination of grand canonical Gibbs measures $\mu_{\chi}=\mu_{\beta,\lambda}$: $ \nu=\int d\rho(\chi) \mu_{\chi}$, with $\rho$ a probability measure such that $ \int d\rho(\chi) \mu_{\chi}[e_j] \leqslant C_0$ for any $j \in \Z$. 
 
 Hence, it results that  \begin{align} \limsup_{M \to \infty} & \limsup_{\ell \to \infty} \limsup_{N \to \infty} \int  \left\{ \left\vert \frac{1}{\ell} \sum_{i \in \Lambda_{\ell}(0)} J_{i,M}-H({\eta}_{\ell,M}(0))\right\vert \right\} d\nu^N \notag \\ & =\limsup_{M\to \infty} \limsup_{\ell \to \infty} \int d\rho(\chi) \int  \left\{ \left\vert \frac{1}{\ell} \sum_{i \in \Lambda_{\ell}(0)} J_{i,M}-H({\eta}_{\ell,M}(0))\right\vert \right\} d\mu_{\chi} \notag \\ 
 &  = \limsup_{M\to \infty} \int d\rho(\chi) \left[\limsup_{\ell \to \infty} \int  \left\{ \left\vert \frac{1}{\ell} \sum_{i \in \Lambda_{\ell}(0)} J_{i,M}-H({\eta}_{\ell,M}(0))\right\vert \right\} d\mu_{\chi}\right],\end{align}
 where the last equality is a consequence of the {dominated convergence theorem. }
Since $\mu_{\chi}$ is ergodic with respect to $\{\tau_x \ ;\ x \in \Z\}$, the last term is equal to \begin{equation}\limsup_{M\to\infty} \int d\rho(\chi) \left\vert \mu_{\chi} \left[J_{0,M}\right]-H(\mu_{\chi}[\eta_{0,M}])\right\vert.\end{equation}
 As $M\to \infty$, $\mu_{\chi}[J_{0,M}]$ converges to $\mu_{\chi}[J_0]=H\left(\mu_{\chi}[\xi_0]\right)$ and $\mu_{\chi}[\xi_{0,M}]$ to $\mu_{\chi}[\xi_0]$.
 
 By Fatou's lemma, the limit in $M$ is equal to 0 and this concludes the proof of the one-block lemma.

\newpage 

\providecommand{\bysame}{\leavevmode\hbox to3em{\hrulefill}\thinspace}
\providecommand{\href}[2]{#2}

\end{document}